\newtheorem{thm}{Theorem}[section]
\newtheorem{prop}[thm]{Proposition}
\newtheorem{lem}[thm]{Lemma}
\newtheorem{cor}[thm]{Corollary}
\theoremstyle{definition}
\newtheorem{defn}[thm]{Definition}
\theoremstyle{remark}
\newtheorem{remk}[thm]{Remark}
\newtheorem{remks}[thm]{Remarks}
\newtheorem{exm}[thm]{Example}
\newtheorem{exms}[thm]{Examples}
\newtheorem{notat}[thm]{Notation}
\numberwithin{equation}{section}
\newcommand{\thmref}{Theorem~\ref}
\newcommand{\propref}{Proposition~\ref}
\newcommand{\corref}{Corollary~\ref}
\newcommand{\lemref}{Lemma~\ref}
\newcommand{\sC}{{\mathcal C}}
\newcommand{\sD}{{\mathcal D}}
\newcommand{\sF}{{\mathcal F}}
\newcommand{\sO}{{\mathcal O}}
\newcommand{\sP}{{\mathcal P}}
\newcommand{\sR}{{\mathcal R}}
\newcommand{\sV}{{\mathcal V}}
\newcommand{\sW}{{\mathcal W}}
\newcommand{\A}{{\mathbb A}}
\newcommand{\F}{{\mathbb F}}
\newcommand{\G}{{\mathbb G}}
\renewcommand{\H}{{\mathbb H}}
\newcommand{\N}{{\mathbb N}}
\renewcommand{\P}{{\mathbb P}}
\newcommand{\Q}{{\mathbb Q}}
\newcommand{\W}{{\mathbb W}}
\newcommand{\Z}{{\mathbb Z}}
\newcommand{\fm}{{\mathfrak m}}
\newcommand{\fp}{{\mathfrak p}}
\newcommand{\CH}{{\rm CH}}
\newcommand{\surj}{\twoheadrightarrow}
\newcommand{\inj}{\hookrightarrow}
\newcommand{\red}{{\rm red}}
\newcommand{\Spec}{{\rm Spec \,}}
\newcommand{\Tr}{{\rm Tr}}
\newcommand{\Sch}{{\operatorname{\mathbf{Sch}}}}
\newcommand{\op}{{\text{\rm op}}}
\newcommand{\Sm}{{\mathbf{Sm}}}
\newcommand{\SmAff}{{\mathbf{SmAff}}}
\newcommand{\ds}{{/\kern-3pt/}}
\renewcommand{\log}{{\operatorname{log}}}
\newcommand{\ess}{\text{\rm{ess}}}
\newcommand{\Proj}{{\operatorname{Proj}}}
\newcommand{\colim}{\mathop{\text{\rm colim}}}
\newcommand{\TZ}{{\operatorname{Tz}}}
\renewcommand{\TH}{{\operatorname{TCH}}}
\newcommand{\un}{\underline}
\newcommand{\ov}{\overline}
\newcommand{\dgn}{{\operatorname{degn}}}
\renewcommand{\dim}{\text{\rm dim}}
\newcommand{\tuborg}{\left\{\begin{array}{ll}}
\newcommand{\sluttuborg}{\end{array}\right.}
\newcommand{\tch}{{\mathcal{TCH}}}
\newcommand{\sfs}{{\rm sfs}}
\newcommand{\fs}{{\rm fs}}
\newcommand{\zar}{{\rm zar}}
\newcommand{\wt}{\widetilde}
\newcommand{\wh}{\widehat}
\newcounter{elno}
\newcounter{elno-abc}   
\newcounter{elno-abc-prime}
\begin{document}
\title{de Rham-Witt sheaves via algebraic cycles}
\author{Amalendu Krishna and Jinhyun Park\\(With an appendix by Kay R\"ulling)}
\address{School of Mathematics, Tata Institute of Fundamental Research,  
1 Homi Bhabha Road, Colaba, Mumbai, India}
\email{amal@math.tifr.res.in}
\address{Department of Mathematical Sciences, KAIST, 291 Daehak-ro Yuseong-gu, 
Daejeon, 34141, Republic of Korea (South)}
\email{jinhyun@mathsci.kaist.ac.kr; jinhyun@kaist.edu}
\address{Bergische Universit\"at Wuppertal, Fakult\"at Mathematik und Naturwissenshaften, Gauss-strasse 20, D-42119 Wuppertal, Germany}
\email{kay.ruelling@uni-wuppertal.de}

\keywords{algebraic cycles, de Rham-Witt complex, crystalline cohomology}        

\subjclass[2010]{Primary 14C25; Secondary 13F35, 14F30, 19E15}

\begin{abstract}
We show that the additive higher Chow groups of regular schemes over a field induce a Zariski sheaf of pro-differential graded algebras, whose Milnor range is isomorphic to the Zariski sheaf of big de Rham-Witt complexes. This provides an explicit cycle-theoretic description of the big de Rham-Witt sheaves. Several applications are derived.
\end{abstract}
\setcounter{tocdepth}{1}
\maketitle
\tableofcontents

\section{Introduction}\label{sec:Intro}
\subsection{Motivation}\label{sec:Motivation}
For a presheaf $\sF$ on the category of smooth schemes essentially of finite type over a field with values in abelian groups, it is often
desirable to describe the values of $\sF$ in terms of algebraic cycles
because the cycles are usually relatively explicit by nature.

For instance, by the results of Bloch \cite{Bl1}, Bloch-Lichtenbaum \cite{BL}, and Friedlander-Suslin \cite{FS} and Levine \cite{Levine}, one knows that the higher algebraic $K$-groups of smooth schemes over a field can be described in terms of the higher Chow groups of Bloch through a spectral sequence. A property of $K$-theory of smooth schemes that helps achieve this is its $\A^1$-invariance.  However, there are several presheaves on smooth schemes which do not
satisfy the $\A^1$-invariance property and for which one often has to
seek a description in terms of algebraic cycles.

Examples of non-$\A^1$-invariant presheaves we have in mind are the differential forms and the $K$-theory of non-reduced infinitesimal extensions of smooth schemes. These are some of the most useful presheaves on the category of smooth schemes over a field, as they carry significant information about the underlying schemes. Perhaps in order to study this question for these two presheaves, Bloch and Esnault \cite{BE1}, \cite{BE2} invented the additive higher Chow groups of fields. This is related to earlier works of Bloch (e.g. \cite{Bloch tangent}) on the study of
the tangent space to the space of cycles and $K$-theory. The additive higher Chow groups were subsequently defined and studied for more general schemes in the works of Krishna-Levine \cite{KL}, Park \cite{P1} and R{\"u}lling \cite{R}.

The results of \cite{BE1} and \cite{BE2} suggest that additive higher Chow groups could be useful in describing the above two non-$\A^1$-invariant presheaves
in terms of algebraic cycles. Soon after, R{\"u}lling \cite{R} proved a more general result that when the underlying scheme is the spectrum of a field, the big de Rham-Witt complex of Hesselholt-Madsen \cite{HeMa} is isomorphic to a complex consisting of additive higher Chow groups of the field.

\subsection{The main result}\label{sec:MR}
The purpose of this paper is to show that the sheaf of big de Rham-Witt complexes on the big Zariski site of all regular schemes essentially of finite type over any field coincides with the sheaf of complexes of the (Milnor range of) additive higher Chow groups. 
Among several consequences, this provides a new description 
of the crystalline cohomology of smooth quasi-projective schemes over a 
perfect field  purely in terms of algebraic cycles. This is the 
cycle-theoretic avatar of Bloch's description of the crystalline cohomology via
algebraic $K$-theory \cite{Bloch crys}.
The main result from which we derive the above is the following:

\begin{thm}\label{thm:Main-1}
Let $k$ be an arbitrary field and let $R$ be a regular semi-local $k$-algebra essentially of finite type. Then for all $m, n \geq 1$, there are isomorphisms between the big de Rham-Witt forms and the additive higher Chow groups
\begin{equation}\label{eqn:intro-main-eqn}
\tau^R _{n,m} \colon 
\mathbb{W}_m \Omega_R ^{n-1} \overset{\simeq}{\to} \TH^n (R, n;m),
\end{equation}
which are natural in $R$. In particular, the additive higher Chow groups of $\Spec (R)$ in the Milnor range form the universal restricted Witt-complex over $R$.
\end{thm}

\thmref{thm:Main-1} is a direct extension of R{\"u}lling's theorem \cite{R} proven originally for fields to the case of regular semi-local algebras over fields. This can also be considered as a non-$\A^1$-invariant analogue of the results of Elbaz-Vincent--M\"uller-Stach \cite{EVMS} and Kerz \cite{Kerz}, \cite{Kerz10}, which together imply that the Milnor $K$-theory of such rings are isomorphic to their higher Chow groups. The corresponding result for fields was shown by Totaro \cite{Totaro}. 

\subsection{Applications}\label{sec:Apln}
The first interesting consequence of Theorem \ref{thm:Main-1} is that it provides new geometric perspectives to the Witt vectors and the de Rham-Witt forms. Take $n=1$, for instance. Then the map $\tau_{1, m} ^R$ is a ring isomorphism. The underlying abelian group of the ring $\mathbb{W}_m (R)$ is not so difficult to understand. However, its product structure is notoriously complicated. The ring isomorphism $\tau_{1, m} ^R$ then shows that the addition of $\mathbb{W}_m (R)$ corresponds simply to the summation of algebraic cycles in $\TH^1 (R, 1;m)$, while the complicated multiplication of $\mathbb{W}_m (R)$ corresponds to a simple Pontryagin-intersection product of algebraic cycles, thus giving a conceptual and simple description of the ring structure of Witt vectors. 

For general $n \geq 1$, the differential operator, the wedge product, and Frobenius and Verschiebung operators on $\W_m\Omega^\bullet_R$ are nontrivial objects to describe. However, their counterparts for additive higher Chow groups are simply the Pontryagin-intersection, flat pull-back and finite push-forward of algebraic cycles.

Apart from the above, we derive several further applications of \thmref{thm:Main-1}. We show that the additive higher Chow groups have $p$-typical decomposition in characteristic $p > 0$. Using this and \thmref{thm:Main-1}, we show that the crystalline cohomology of smooth quasi-projective schemes over a perfect field is motivic in a broad sense. We also show using \thmref{thm:Main-1} and the results \cite{HessK} that the algebraic $K$-theory of the truncated polynomial algebras over regular semi-local rings over a field admit descriptions through cycles.

We show that in characteristic $p > 0$, the mod-$p$ motivic cohomology and mod-$p$ Milnor as well as Quillen $K$-theory Zariski sheaves on regular schemes over a field coincide with the sheaf of $p$-typical additive higher Chow groups. We also deduce a Gersten resolution for additive higher Chow groups in the sense that the Cousin complex of the Zariski sheaf of additive higher Chow groups on a regular scheme essentially of finite type over a field is exact. These results show that the $p$-typical additive higher Chow groups provide  cycle-theoretic descriptions of existing mod-$p$ cohomology theories of regular schemes.

In another application, we use \thmref{thm:Main-1} and the existence of push-forward maps for additive higher Chow groups to give new and direct construction of the theory of trace maps on big de Rham-Witt complexes of regular algebras essentially of finite type over a field. Such a theory is abstractly known to exist (in the $p$-typical case) by the duality theory for de Rham-Witt complex by Ekedahl \cite{Ekedahl}. However, Ekedahl's theory of trace is obtained using the complicated machinery of Grothendieck duality in the derived category of quasi-coherent sheaves over the sheaves of
$p$-typical Witt vectors. This makes it very hard to work with his trace. 
The construction of trace was also previously known for the de Rham-Witt complex of fields (see \cite{R}). It follows \emph{a posteriori} that Ekedahl's trace
and the push-forward of cycles are compatible, and the traces now have more explicit descriptions. See Theorems \ref{thm:trace} and \ref{thm:trace-gen}.

In our final application, we use the Galois descent property of the de Rham-Witt complex to prove
a similar descent for the additive higher chow groups of regular semi-local algebras over a field.
The precise versions of these applications are given in
\S~\ref{sec:Appln}. Besides these, \thmref{thm:Main-1} also plays crucial roles in the proofs of the main
results of \cite{GK-1} and \cite{GK-2}.

\subsection{Outline of proofs}\label{sec:Outline}
The proof of \thmref{thm:Main-1} relies on several results of previous papers of the authors. Among these, \cite{KP2}, \cite{KP3} and \cite{KP sfs} play major roles. The first step in the proof of \thmref{thm:Main-1} is to construct the `de Rham-Witt-Chow homomorphism' $\tau^R_{n,m}$ from the big de Rham-Witt complex $\W_m\Omega^{n-1}_R$ of a regular semi-local ring $R$ to its additive higher Chow groups $\TH^n(R,n;m)$. This itself is non-trivial and requires us to show that the additive higher Chow groups of $R$ form a restricted Witt-complex over $R$ in the sense of \cite[Definition~1.14]{R}. Modulo a new limit argument in the case of imperfect base fields discussed in the present article, the proof of this step was the main purpose of the papers \cite{KP2} and \cite{KP3}.

The next step is to show that de Rham-Witt-Chow homomorphism $\tau^R_{n,m}$ is injective. This is not hard. We use a reduction step and a Gersten type result for the de Rham-Witt complex to reduce to the case where $R$ is a field. The latter case is due to R{\"u}lling \cite{R} (and see \S \ref{sec:Appendix} for the characteristic $2$ case).

The surjectivity of $\tau^R_{n,m}$ is a difficult part of the proof of \thmref{thm:Main-1}. In order to achieve this, we follow several reduction steps. We first prove this in the case when $k$ is infinite and perfect. To deal with this case, we rely on the `$\sfs$-moving lemma' of \cite{KP sfs} (see Theorem \ref{thm:sfs-TCH}). When the base field is finite, we use a pro-$\ell$ extension trick to reduce it to the case of infinite perfect base fields. The remaining case when $k$ is imperfect is reduced to when $k$ is perfect using a limit argument. 

For the infinite perfect base field case, using the \sfs-moving lemma, we prove that for a given cycle $\alpha \in \TH^n(R,n;m)$, we can find a finite extension of regular semi-local $k$-algebras $f \colon R \to R'$, and a cycle $\alpha' \in \TH^n(R',n;m)$ such that: (1) $f_*(\alpha') = \alpha$ and (2) $\alpha'$ is `symbolic', i.e., lies in the image of $\tau^{R'}_{n,m}$. The problem we face now is that we do not know how to conclude from this that $\alpha$ is symbolic using the existing theory of trace maps for de Rham-Witt forms due to Ekedahl \cite{Ekedahl}
(see \S~\ref{sec:Trace*}). To circumvent this problem, we devise a technique of \emph{traceability} of de Rham-Witt forms via cycles.

The traceability of a form $\omega' \in \W_m\Omega^{n-1}_{R'}$ yields a trace element of $\omega'$ in $\W_m\Omega^{n-1}_{R}$ up to images in the additive higher Chow groups. This luckily suffices for our purpose. We show, by factoring the extension $R \to R'$ into a composition of simple extensions, that every form in $\W_m\Omega^{n-1}_{R'}$ is traceable to $R$ (i.e., has a trace element in $\W_m\Omega^{n-1}_{R}$), see \propref{prop:Trace-simple}. The proof of this requires us to work with $\W_m\Omega^{n-1}_{R}$ for all $m \ge 1$ and $n \ge 1$ simultaneously rather than working with these groups individually. This uses the full strength of the Witt-complex structures on de Rham-Witt forms and additive higher Chow groups, and also the fact that $\tau^R_{n,m}$ is a morphism of Witt-complexes. The traceability allows us to find a pre-image of $\alpha$ in $\W_m\Omega^{n-1}_{R}$.

We recall the definitions of Milnor $K$-theory and de Rham-Witt complex, and then prove some of their properties in \S \ref{sec:MKDW}. We recall the definitions of additive higher Chow groups and some basic results on them in \S \ref{sec:ACH}. The main result of \S \ref{sec:WCADC} is to prove the restricted Witt-complex structure on the additive higher Chow groups of regular semi-local algebras over arbitrary fields. The de Rham-Witt-Chow homomorphism $\tau^R_{n,m}$ is constructed in \S \ref{sec:The-map}. We prove the key result on traceability in this section. We finish the proof of  \thmref{thm:Main-1} in \S \ref{sec:PRF} and its applications are derived in  \S \ref{sec:Appln}. 

The last section is an appendix by Kay R{\"u}lling, which proves the characteristic $2$ case of his result from \cite{R} for fields. We thank him for this contribution.

\section{Milnor $K$-theory and de Rham-Witt complex}\label{sec:MKDW}
We fix an arbitrary field $k$. We let $p \ge 1$ denote the exponential characteristic of $k$. Any further restriction on the nature of $k$ will be explicitly stated. In this section, we fix our conventions on notations. We then recall the definitions of Milnor $K$-groups, rings of big Witt vectors, and the big de Rham-Witt complexes of Hesselholt-Madsen \cite{HeMa}. We discuss some properties of these objects that will be used throughout this paper. 

\subsection{Conventions}\label{sec:Notn} 
In this paper, a $k$-scheme is a separated scheme of finite type over $k$, unless we say otherwise. A $k$-variety is a reduced $k$-scheme. The product $X \times Y$ means usually $X \times _k Y$, unless we specify otherwise. We let $\Sch_k$ be the category of $k$-schemes, $\Sm_k$ of smooth $k$-schemes, and $\SmAff_k$ of smooth affine $k$-schemes. 
By a scheme essentially of finite type over $k$, we mean a Noetherian $k$-scheme which is the projective limit of a cofiltered collection of open subschemes of a finite type $k$-scheme. We let $\Sch^{\ess}_k$ be the category of such schemes. An affine scheme is an object of $\Sch^{\ess}_k$ if and only if it is the spectrum of a ring obtained by localizing a finite type $k$-algebra at a multiplicatively closed subset. By a semi-local scheme, we shall mean an affine $k$-scheme essentially of finite type with only finitely many closed points.

A ring $R$ in this paper will always mean a commutative Noetherian $k$-algebra. We say that $R$ is regular if all its local rings are regular local rings. Equivalently, $R_\fm$ is a regular local ring for every maximal ideal $\fm \subset R$. We let $\N$ denote the set of all positive integers.

\subsection{Milnor $K$-groups}\label{subsection:Milnor K}
Recall that the Milnor $K$-ring $K_* ^M (R)$ of $R$ is the quotient $T_{\mathbb{Z}} ^* (R^{\times}) / I$ of the tensor algebra of $R^\times$ by the two-sided ideal $I$ generated by $\{ a \otimes (1-a) \ | \ a \in R^{\times}, 1-a \in R^{\times} \}$. Its degree $n$-part is $K_n ^M (R)$ and $\{ a_1, \cdots, a_n \}$ is the image of $a_1 \otimes \cdots \otimes a_n$ in $K^M_n(R)$, where $a_i \in R^{\times}$.

When $k$ is finite, it is known that the Milnor $K$-groups $K^M_*(A)$ are not most ideally defined. For instance, the Gersten conjecture fails even if $R$ is a regular semi-local ring. To remedy this, Gabber (unpublished) and Kerz \cite{Kerz10} defined an `improved Milnor $K$-theory' $\wh{K}^M_*(R)$. These groups are equipped with a natural map of graded rings $\psi_R \colon K^M_*(R) \to \wh{K}^M_*(R)$ satisfying the following properties.
\begin{enumerate}
\item $\psi_R$ is an isomorphism if either $k$ is infinite or $R$ is a (finite or infinite) field.
\item $\psi_R$ is surjective if $R$ is local.
\item
The Gersten conjecture holds for $\wh{K}^M_*(R)$ if $R$ is regular semi-local essentially of finite type over $k$.
\end{enumerate}

In this paper, the Milnor $K$-theory $K^M_*(R)$ will always mean the improved Milnor $K$-theory of Gabber and Kerz.  

\subsection{The ring of Witt-vectors}\label{subsection:Witt ring}
Let $R$ be a ring. We recall the definition of the ring of big Witt-vectors of $R$, e.g., from \cite[Appendix A]{R}. A \emph{truncation set} $S \subset \mathbb{N}$ is a nonempty subset such that if $s \in S$ and $t | s$, then $t \in S$. As a set, let $\mathbb{W}_S (R):= R^S$ and define the map $w: \mathbb{W}_S (R) \to R^S$, $a = (a_s)_{s \in S} \mapsto w(a) = (w(a)_s)_{s \in S}$, where $  w(a)_s := \sum_{ t | s} t a_t ^{ \frac{s}{t}}$ (called the `ghost map'). When $R^S$ on the target of $w$ is given the component-wise ring structure, it is known that there is a unique functorial ring structure on $\mathbb{W}_S (R)$ such that $w$ is a ring homomorphism. See \cite[Proposition 1.2]{Hesselholt2}. 
The ghost map is an isomorphism if $R$ contains $\Q$.

For two truncation sets $S \subset S'$, there is a restriction $\mathfrak{R} : \mathbb{W}_{S'} (R) \to \mathbb{W}_{S}(R)$. When $S= \{ 1, \ldots, m \}$, we write $\mathbb{W}_m (R): = \mathbb{W}_S (R)$. We let $\W(R):= \mathbb{W}_{\mathbb{N}} (R)$, which is $ \varprojlim_m \W_m(R)$. For a fixed prime $p$ and $S_i= \{ 1, p, \ldots, p^{i-1} \}$, we write $W_i (R) = \mathbb{W}_{S_i} (R)$ and $W(R) := \mathbb{W}_{1, p, p^2 , \cdots} (R)$, which is $ {\varprojlim}_i W_i(R)$. They are the $p$-typical rings of Witt vectors.

There is another description of the rings $\mathbb{W}_S (R)$. There is a natural bijection $\mathbb{W}(R) \simeq (1+ TR[[T]])^{\times}$, where $T$ is an indeterminate and the addition of the ring $\mathbb{W}(R)$ corresponds to the multiplication of the formal power series. For a truncation set $S$, we have $\mathbb{W}_S (R) \cong (1+ TR[[T]])^{\times} / I_S$ for a  suitable subgroup $I_S$. See \cite[A.7]{R} for details. In case $S= \{ 1, \ldots, m \}$, we have an isomorphism 
\begin{equation}\label{eqn:WF-1}
\gamma: \mathbb{W}_m (R) \simeq (1+ TR[[T]])^{\times} / (1 + T^{m+1} R[[T]])^{\times};\ \ (a_i)_{1 \le i \le m}\mapsto \prod_{i=1} ^m  (1- a_iT^i).
\end{equation}

The Teichm\"uller lift $[ - ]_S : R \to \mathbb{W}_S (R)$ is given by $a \mapsto 1-aT$, which is a multiplicative map. If $S= \{ 1, \ldots, m \}$, we write $[-]_m$ for $[-]_S$. When the truncation set is understood, we shall write $[-]_S$ also as $[-]$. For each $i \geq 1$, we have the $i$-th Verschiebung map $V_i$ given by $V_i([a]_{\lfloor{m/i}\rfloor})= (1- aT^i)$ under the identification \eqref{eqn:WF-1}, where for a non-negative real number $c$, one denotes by $\lfloor c \rfloor$ the greatest integer not bigger than $c$. By 
\cite[Properties A.4(i)]{R}, for $x= (x_i) \in \mathbb{W}_m (R)$, we have
\begin{equation}\label{eqn:Witt present}
x = \sum_{i=1}^m  V_i ( [x_i]_{ \lfloor{m/i}\rfloor}).
\end{equation}

\subsection{The de Rham-Witt complex}\label{sec:DRW}
Let $R$ be a $k$ algebra. Recall from \cite[Definition 1.1.1]{HeMa} that \emph{a restricted Witt-complex over $R$} is a pro-system of differential graded 
$\mathbb{Z}$-algebras $((E^\bullet_m)_{m \in \mathbb{N}}, \mathfrak{R}: 
E^\bullet_{m+1} \to E^\bullet_m)$ together with families of homomorphisms of graded rings $(F_r: E^\bullet_{rm+r-1} \to E^\bullet_m)_{m , r \in \mathbb{N}}$ called 
Frobenius maps, and homomorphisms of graded groups 
$(V_r: E^\bullet_m \to E^\bullet_{rm+r-1})_{m,r \in \mathbb{N}}$ called Verschiebung 
maps, satisfying the following relations for all $n, r, s \in \mathbb{N}$:
\begin{enumerate}
\item [(i)] $\mathfrak{R}F_r= F_r \mathfrak{R}^r, \mathfrak{R}^r V_r = V_r \mathfrak{R}, F_1 = V_1= {\rm Id}, F_r F_s = F_{rs}, V_r V_s = V_{rs}$.
\item [(ii)] $F_r V_r = r$. When $(r,s) = 1$, then $F_r V_s = V_s F_r$ on 
$E^\bullet_{rm+r-1}$.
\item [(iii)] $V_r (F_r (x)y) = x V_r (y)$ for all $x \in E^\bullet_{rm+r-1}$ and 
$y \in E^\bullet_m$ (projection formula).
\item [(iv)] $F_r d V_r = d$ (where $d$ is the differential operator of the DGAs).
\end{enumerate}
Furthermore, there is a homomorphism of pro-rings $(\lambda: \mathbb{W}_m (R) \to E_m ^0)_{m \in \mathbb{N}}$ compatible with $F_r$ and $V_r$, such that
\begin{enumerate}
\item [(v)] $F_r d \lambda ([a]) = \lambda ([a]^{r-1}) d \lambda ([a])$ for all $a \in R$ and $r \in \mathbb{N}$,
\end{enumerate}
where $[a] \in \mathbb{W}_m (R)$ is the Teichm\"uller lift of $a \in R$.

By \cite{Hesselholt2}, the category of restricted Witt-complexes over $R$ has an initial object, called the de Rham-Witt complex $(\W_m\Omega^\bullet_R)_{m \in \N}$ of $R$. For fixed $m \in \N$, the complex $\W_m\Omega^\bullet_R$ has an explicit description as a quotient of the differential graded algebra $\Omega^\bullet_{{\W_m(R)}/{\Z}}$ (the absolute de Rham complex of $\W_m(R)$) by a differential graded ideal $N^\bullet_m(R)$, which is defined by an explicit set of generators (see \cite[\S 4]{Hesselholt2} or \cite[Proposition 1.2]{R}). One property of $\W_m\Omega^\bullet_R$ is that the canonical maps $\lambda \colon \W_m(R) \to \W_m\Omega^0_R$ (for all $m$) and $\W_1\Omega^\bullet_R \to \Omega^\bullet_R$ (as a map of DGA's) are isomorphisms.

More generally, for any finite truncation set $S \subset \N$, we have the de Rham-Witt complex $\W_S\Omega^\bullet_R := \Omega_{\mathbb{W}_S (R)/\mathbb{Z}}^{\bullet}/ N_S ^{\bullet},$ where $N_S ^{\bullet}$ is a differential graded ideal given by some generators. When a prime $p$ is fixed, taking $S = S_i = \{1, p, \ldots , p^{i-1}\}$, we get $\W_{S_i}\Omega^\bullet_R$, which we shall denote by  $W_i\Omega^\bullet_R$. This is the classical $p$-typical de Rham-Witt complex of $R$  defined by Bloch, Deligne and Illusie (see \cite{Bloch crys} and
\cite{Illusie}).

\begin{remk}
We remark that our definition of Witt complex over $R$ from \cite{HeMa} is a bit different from the version in \cite[Definition 4.1]{Hesselholt2}, but when $2$ is invertible or zero in $R$, both of them coincide. We only consider the case when $R$ contains a field so that $2$ is always either invertible or zero in $R$. We will stick to this definition from \cite{HeMa}, as this definition is simpler. 

In general, for rings that do not necessarily contain a field, \cite{Hesselholt2} makes substantial usages of an element $d \log [-1] = [-1] d [-1]$ for the Teichm\"uller lift $[-1] \in \mathbb{W}_S (R)$ to make corrections for $2$-torsions. We won't need these so that we won't recall them, but one aspect of them motivates Lemma \ref{lem:dada=0} even when $R$ contains a field. We use it later in Proposition \ref{prop:Trace-simple}. \qed.
\end{remk}

\subsection{The $p$-typicalization of de Rham-Witt complex}\label{sec:p-typical}
Let $R$ be a $k$-algebra as above. Then one knows that the ring of Witt-vectors $\W_m(R)$ has a finite set of idempotents which decompose it as a finite product of $p$-typical rings of Witt-vectors. Furthermore, this decomposition is functorial in $R$ and compatible with respect to the restriction maps $\mathfrak{R} \colon \W_m(R) \to \W_{m-1}(R)$ for all $m \in \N$. As with any $\W_m(R)$-module, we get a functorial decomposition (see \cite[Theorem 1.11]{R})
\begin{equation}\label{eqn:p-typical-decom}
\theta_R \colon \W_m\Omega^\bullet_R \xrightarrow{\simeq}
{\underset{n \in I_p}\prod} \W_{\sP \cap (\un{m}/{n})}\Omega^\bullet_R,
\end{equation}
where $\un{m}= \{ 1, \cdots, m \}$, $\sP= \{ 1, p, p^2, \cdots \}$, the set $I_p$ consists of all natural numbers prime to $p$, and $(\un{m}/n)=\{a \in \N| an \in \un{m}\}$. 

\subsection{N{\'e}ron-Popescu approximation}\label{sec:Approx}
To prove the main result of this paper over an imperfect base field, we will need an approximation method based on the N{\'e}ron-Popescu desingularization \cite{Popescu} of regular rings, which we recall below. Details of the proof of this result can also be found in \cite[Theorem~1.1]{Swan}.

\begin{thm}\label{thm:Neron-Popescu}
Let $R$ be a regular semi-local $k$-algebra. Let $k' \subset k$ be the prime field. Then $R$ can be written as a filtering inductive limit $\varinjlim_{\lambda \in I} R_\lambda$ such that each $R_\lambda$ is a semi-local ring which is essentially of finite type and smooth over $k'$.
\end{thm}

When $R$ is essentially of finite type in addition, we can say a bit more (e.g, see \cite[Proof of Theorem~5.11]{Quillen K}):

\begin{lem}\label{lem:Popescu-eft}
Suppose that $p > 1$ and $R$ is a regular semi-local $k$-algebra essentially of finite type. Then we can find a direct system of smooth and essentially of finite type semi-local $\F_p$-algebras $\{R_i\}_{i \in I}$ such that the transition maps $\lambda_{ij} \colon R_i \to R_{j}$ for $i \le j$ are injective and faithfully flat. Moreover, $R = \varinjlim_i R_i$ and each inclusion $R_i \inj R$ is faithfully flat.
\end{lem}

\begin{proof}
Since $R$ is regular, it is a finite direct product of regular semi-local $k$-algebras essentially of finite type, each of which is an integral domain. We can therefore assume without loss of generality that $R$ is an integral domain.

Since $R$ is essentially of finite type over $k$ and semi-local, we can find an integral domain $\bar{R}$, which is a $k$-algebra of finite type, and a finite set $\Sigma$ of primes in $\bar{R}$ such that $R = \bar{R}_{\Sigma}$, the localization at $\Sigma$. Since $R$ is regular and the regular locus of $\Spec(\bar{R})$ is open, we can assume that $\bar{R}$ is regular.

We can now find a subfield $k' \subset k$ which is finitely generated over $\F_p$, a $k'$-algebra $\bar{A}$ of finite type such that $\bar{R} = \bar{A} \otimes_{k'} k$. Since the prime ideals lying in the finite set $\Sigma$ are finitely generated, we can assume (after replacing $k'$ by a bigger finitely generated subfield of $k$) that there is a finite set $\Sigma'$ of primes in $\bar{A}$ such that $\Sigma = \{\fp \bar{R}| \fp \in \Sigma'\}$. By replacing the elements of $\Sigma'$ by the contractions of the elements  in $\Sigma$, we can also assume that $\Sigma' = \{\fp \cap \bar{A}| \fp \in \Sigma\}$. 

Since $\bar{R}$ is a regular $\F_p$-algebra and $\F_p$ is perfect, we see that $R$ is geometrically regular over $\F_p$. Since the inclusion $\bar{A} \inj \bar{R}$ of Noetherian $\F_p$-algebras is faithfully flat, the ring $\bar{A}$ is also geometrically regular over $\F_p$ (see e.g.\cite[{Lemma 07NH}]{stacks-project}). In particular, it is regular.

Setting $R' = {\bar{A}}_{\Sigma'}$ and $R'_{k} =  R' \otimes_{k'} k$, we have the canonical injection $R'_k \inj \bar{R}_{\Sigma} = R$ which is a localization map. Writing $k$ as a direct limit of its finitely generated subfields, we can find  a direct system of finitely generated subfields $\{k_i\}_{i \in I}$ such that $k' \subset k_i \subset k_{j} \subset k$ for $i \le j$, and $k = \bigcup_{i \in I} k_i$. In particular, $R'_k = {\varinjlim_i} \ R'_i$, where $R'_i:= R' \otimes_{k'} k_i$. Since $R'_{j} = R' \otimes_{k'} k_{j} = (R' \otimes_{k'} k_i) \otimes_{k_i}  k_{j} = R'_i \otimes_{k_i} k_{j}$ for $i \le j$, it follows that each transition map $\lambda_{ij} \colon R'_i \to R'_{j}$ is an injective faithfully flat map of Noetherian $\F_p$-algebras such that $R'_k =\varinjlim_i  R'_i$. Furthermore, $R'_{k} = R'_i \otimes_{k_i} k$ and hence the inclusion $R'_i \inj R'_k$ is also faithfully flat. In particular, each $R'_i$ is a regular integral domain.

Since $R'$ is essentially of finite type over $k'$ and $k'$ is finitely generated over $\F_p$, it follows that $R'$ is an $\F_p$-algebra essentially of finite type. Since each $k_i$ is finitely generated over $\F_p$, the same argument implies that each $R'_i$ is an regular $\F_p$-algebra essentially of finite type. Since $\F_p$ is perfect, it follows that each $R'_i$ is smooth over $\F_p$. We let $\Sigma_i = \{\fp \cap R'_i| \fp \in \Sigma\}$ under the inclusion $R'_i \inj R$. It is clear that for each pair of elements $i \le j$ in $I$, the transition map $\lambda_{ij}: R'_i \inj R'_{j}$ induces an inclusion $\lambda_{ij} \colon (R'_i)_{\Sigma_i} \inj (R'_j)_{\Sigma_j}$ such that $\Sigma_i =  \{\fp \cap (R'_i)_{\Sigma_i}| \fp \in \Sigma_j\}$. We let $R_i = (R'_i)_{\Sigma_i}$ for $i \in I$.

Since each $\lambda_{ij}$ is flat and every closed point of $\Spec(R_i)$ is in the image of the map $\lambda^*_{ij} \colon \Spec(R'_j) \to \Spec(R_i)$, it follows that $\lambda_{ij}$ is faithfully flat (see e.g. \cite[Lemma 00HQ]{stacks-project} or \cite[Theorem 7.2, p.47]{Matsumura}). The same reasoning implies that each inclusion $R_i \inj R$ is faithfully flat.

Finally, we note that every element of $R'_k$ which becomes invertible in $R$ lies in the image of $R'_i \inj R$ for some $i \in I$. Such an element must become invertible in $R_i$. It follows that $R =\varinjlim_i  R_i$. This finishes the proof.
\end{proof}

\begin{remk}\label{remk:Popoescu-eft-gen}
If $R$ is a regular $k$-algebra essentially of finite type, that is not necessarily semi-local, the proof of \lemref{lem:Popescu-eft} shows that a weaker version is still valid. Namely, we can find a direct system of smooth $\F_p$-algebras $\{R_i\}_{i \in I}$ essentially of finite type, such that the transition maps $\lambda_{ij} \colon R_i \to R_{j}$ for $i \le j$ are injective and flat. Moreover, $R = \varinjlim_i R_i$ and each inclusion $R_i \inj R$ is flat. However, we may not be able to ensure that either $R_i \to R_j$ or $R_i \to R$ is faithfully flat.
\end{remk}

\begin{lem}\label{lem:Popescu-eft-natural}
Suppose that $p > 1$ and $f \colon R \to S$ is a morphism between two regular semi-local $k$-algebras essentially of finite type. Then we can find direct systems $\{R_i\}_{i \in I}$ and $\{S_i\}_{i \in I}$ as in \lemref{lem:Popescu-eft} together with maps of $\F_p$-algebras $f_i \colon R_i \to S_i$, compatible with the transition maps of the direct systems such that $R = \varinjlim_i R_i$, $S= \varinjlim_i S_i$ and $f = \varinjlim_i f_i$.
\end{lem}

\begin{proof}
Let $\bar{R}$ be the regular finite type $k$-algebra and $\Sigma$ a finite subset of $\Spec(\bar{R})$ as in the proof of \lemref{lem:Popescu-eft} such that $R = \bar{R}_{\Sigma}$. We can similarly find a regular finite type $k$-algebra $\bar{S}$ and a finite subset $\Phi \subset \Spec(S)$ such that $S = \bar{S}_\Phi$. Since $\bar{R}$ is a finite type $k$-algebra, we can extend the map $f \colon R \to S$ to a $k$-algebra morphism $f \colon \bar{R} \to \bar{S}$ (after possibly replacing $\bar{S}$ by a localization which is still finite type over $k$).

Next, we saw in the proof of \lemref{lem:Popescu-eft} that we can find a subfield $k' \subset k$ which is finitely generated over $\F_p$, a finite type $k'$-algebra $\bar{A}$ (resp. $\bar{B}$), and a finite subset $\Sigma' \subset \Spec(\bar{A})$ (resp. $\Phi' \subset \Spec(\bar{B})$) such that $\bar{R} = \bar{A} \otimes_{k'} k$ (resp. $\bar{S} = \bar{B} \otimes_{k'} k$) and $\Sigma$ (resp. $\Phi$) is the extension of $\Sigma'$ (resp. $\Phi'$). Furthermore, $R = R' \otimes_{k'} k$, where $R' = {\bar{A}}_{\Sigma'}$. Similarly, $S = S' \otimes_{k'} k$, where $S' = \bar{B}_{\Phi'}$.

Since $\bar{A}$ is a finite type $k'$-algebra, we can find a field $k''$ such that it is finitely generated over $\F_p$ with $k' \subset k'' \subset k$ and a $k''$-algebra morphism $f \colon \bar{A} \otimes_{k'} k'' \to \bar{B} \otimes_{k'} k''$ which extends $f \colon \bar{R} = \bar{A} \otimes_{k'} k \to \bar{B} \otimes_{k'} k = \bar{S}$. In other words, we can choose our finitely generated subfield $k' \subset k$ over $\F_p$ such that $f \colon \bar{R} \to \bar{S}$ is induced by a morphism of finite type $k'$-algebras $f \colon \bar{A} \to \bar{B}$.

Since after base change to $k$, the map $f \colon \bar{A} \to \bar{B}$ induces the map $R'_k = {\bar{A}}_{\Sigma'} \otimes_{k'} k \to {\bar{B}}_{\Phi'} \otimes_{k'} k = S'_k$, it follows that $f$ must induce a map of localizations $f \colon R' = {\bar{A}}_{\Sigma'} \to {\bar{B}}_{\Phi'} = S'$. We have therefore shown that there is a finitely generated subfield $k' \subset k$ over $\F_p$, a morphism between essentially of finite type regular semi-local $k'$-algebras $f \colon R' \to S'$ which induces our original map $f \colon R =  (R'_k)_{\Sigma} \to (S'_k)_{\Phi} = S$.

We now choose a direct system of finitely generated subfields $\{k_i\}_{i \in I}$ such that $k' \subset k_i \subset k_{j} \subset k$ for every $i \le j$, and $k = \bigcup_{i \in I} k_i$. We let 
\[
R'_i = R \otimes_{k'} k_i, \  S'_i = S' \otimes_{k'} k_i, \ \Sigma_i = \Sigma \cap R'_i, \ \Phi_i = \Phi \cap S'_i,  \ R_i = (R'_i)_{\Sigma_i}, \ \mbox{and} \  S_i = (S'_i)_{\Phi_i}.
\]
The rings $R$ and $S$ are semi-local with the sets of maximal ideals $\Sigma$ and $\Phi$, respectively. It is easy to check using this that the map $f_i \colon R'_i \to S'_i$ induced by $f \colon R' \to S'$ descends to a map between the localizations $f_i \colon (R'_i)_{\Sigma_i} \to (S'_i)_{\Phi_i}$. Equivalently, a map $f_i \colon R_i \to S_i$. The rest of the proof now is a repetition of the proof of \lemref{lem:Popescu-eft}.  
\end{proof}

\subsection{Some properties of de Rham-Witt complex}\label{sec:Basic-results}
We collect some results about de Rham-Witt complexes needed in this paper. 

\begin{lem}\label{lem:Witt-complex-limit}
Let $\{R_\lambda\}_{\lambda \in I}$ be a filtering system of $k$-algebras with $R = \varinjlim_{\lambda \in I} R_\lambda$. Let $(E^\bullet_{m}(\lambda))_{m \in \N, \lambda \in I}$ be a filtering system of differential graded $\Z$-algebras such that $(E^\bullet_{m}(\lambda))_{m \in \N}$ is a restricted Witt-complex over $R_\lambda$ for each $\lambda \in I$. We let $E^\bullet_m = \varinjlim_{\lambda \in I} E^\bullet_{m}(\lambda)$ for $m \in \N$. Then $(E^\bullet_{m})_{m \in \N}$ is a restricted Witt-complex over $R$.
\end{lem}
\begin{proof}
It follows, e.g., from \cite[Proposition~1.16, Lemma~1.17]{R}.
\end{proof} 

\begin{lem}\label{lem:dada=0}
Let $R$ be a $k$-algebra. Then $d [a] \wedge d [a] = 0$ in $\mathbb{W}_m \Omega_R ^2$ for any $m \in \N$ and $a \in R$.
\end{lem}

\begin{proof}
By \cite[Definitions 3.1, 3.6 and p.186]{Hesselholt2}, we have $d[a] \wedge d [a] = d\log[-1] \wedge F_2 ( d [a])$ for each $a \in R$. If ${\rm char} (k) = 2$, then $-1 = 1$ in $k$ so that $d\log [-1] = d\log [1] = 0$. Hence, $d[a] \wedge d [a] = d\log [-1] \wedge F_2 (d [a]) = 0$. If ${\rm char} (k) \not = 2$, then by the anti-commutativity, we have $d [a] \wedge d [a] = - d [a] \wedge d[a]$ so that $2 d [a] \wedge d [a] = 0$. Since $2 \in k^{\times}$ so that $2 \in \mathbb{W}_m (k)^{\times}$, we deduce $d [a] \wedge d [a] = 0$.
\end{proof}

\begin{prop}\label{prop:inject RK}
Let $R$ be a regular semi-local $k$-algebra and let $K$ be the total ring of quotients for $R$. Then for all $m \geq 1$ and $n \geq 0$, the natural map $\mathbb{W}_m \Omega^n _R \to \mathbb{W}_m \Omega^n _K$ is injective.
\end{prop}

\begin{proof}
Let $K$ denote the total ring of quotients of $R$. This is a product of fields since $R$ is regular, and hence reduced. Using N\'eron-Popescu (\thmref{thm:Neron-Popescu} in \S \ref{sec:Approx}), we can write $R = {\varinjlim}_{\lambda \in I} R_\lambda$ such that each $R_\lambda$ is a semi-local ring which is essentially of finite type and smooth over the prime field of $R$. Since taking the total rings of quotients commutes with a filtering inductive limit of rings, it follows that $K = {\varinjlim}_{\lambda \in I} K_\lambda$, where $K_\lambda$ is the total ring of quotients of $R_\lambda$.

We now have a commutative diagram of canonical maps
\begin{equation}\label{eqn:inject RK-0}
\xymatrix@C.8pc{
{\varinjlim}_{\lambda \in I} \mathbb{W}_m \Omega^n _{R_\lambda} 
\ar[r] \ar[d] & \mathbb{W}_m \Omega^n _{R} \ar[d] \\
{\varinjlim}_{\lambda \in I} \mathbb{W}_m \Omega^n _{K_\lambda} 
\ar[r] & \mathbb{W}_m \Omega^n _{K}}
\end{equation}
in which the horizontal arrows are isomorphisms by \cite[Proposition~1.16]{R}. It suffices therefore to prove the proposition when $k$ is perfect and $R$ is a semi-local ring which is essentially of finite type and smooth over $k$. In this case, $R$ is a finite product of integral domains. We can therefore assume further that $R$ is an integral domain.

We prove this case in two steps.

\textbf{Step 1.} Let $m=1$. We show that $\Omega_{R/\mathbb{Z}} ^n \to \Omega_{K/ \mathbb{Z}} ^n$ is injective.

\noindent \textbf{Claim:} \emph{$\Omega^i _{R/ \mathbb{Z}}$ is a free $R$-module, possibly of infinite rank.}

This is obvious for $i=0$. Suppose $i \geq 1$. Consider the Jacobi-Zariski exact sequence of the maps $\mathbb{Z} \to k \to R$ from \cite[3.5.5.1]{Loday} (which generalizes \cite[Proposition 8.3A]{Hartshorne}):
\[
\cdots \to D_1 (R|k) \to \Omega_{k/\mathbb{Z}} ^1 \otimes_k R \to \Omega_{R/ \mathbb{Z}} ^1 \to \Omega_{R/k} ^1 \to 0,
\]
where $D_1 (R|k)$ is the first Andr\'e-Quillen homology of M. Andr\'e \cite{Andre} and D. Quillen \cite{Quillen} (see \cite[3.5.4]{Loday}). Since $R$ is smooth over $k$, we have $D_1 (R|k)= 0$ by \cite[Theorem 3.5.6]{Loday}. On the other hand, since $R$ is a smooth semi-local $k$-algebra, $\Omega_{R/k}^1$ is a free $R$-module. Thus, we have an isomorphism $\Omega_{R/\mathbb{Z}}^1 \simeq \Omega_{R/k} ^1 \oplus ( \Omega_{k/\mathbb{Z}} ^1 \otimes_k R$). Since $\Omega_{k/\mathbb{Z}}^1$ is a free $k$-module (a $k$-vector space), the space $\Omega _{k/\mathbb{Z}} ^1 \otimes_k R$ is a free $R$-module. Hence, $\Omega_{R/\mathbb{Z}}^1$ is a free $R$-module. Taking wedge products, we deduce that $\Omega_{R/\mathbb{Z}} ^i$ is a free $R$-module for all $i \geq 1$, proving the \textbf{Claim}.

Going back to the proof of \textbf{Step 1}, we apply the functor $-\otimes_R \Omega_{R/\mathbb{Z}} ^i$ to the inclusion $R \hookrightarrow K$. By \textbf{Claim}, the module $\Omega ^i _{R/\mathbb{Z}}$ is free so that we get an injection $\Omega_{R/\mathbb{Z}} ^i \hookrightarrow K \otimes_R \Omega_{R/\mathbb{Z}}^i$, where the latter group is isomorphic to $\Omega_{K/\mathbb{Z}} ^i$ by \cite[Proposition 8.2A]{Hartshorne}. Hence, \textbf{Step 1} is done.

\textbf{Step 2.} Now suppose $m \geq 1$. When ${\rm char} (k)  = 0$, by \cite[Remark 1.12]{R}, $\mathbb{W}_m \Omega_R ^n \to \mathbb{W}_m \Omega_K ^n$ decompose into a direct product of maps $\Omega_{R/\mathbb{Z}}^n \to \Omega_{K/\mathbb{Z}} ^n$, each of which is injective by \textbf{Step 1}. Hence, the direct product is also injective.

When ${\rm char} (k) = p>0$, the map $\mathbb{W}_m \Omega_R ^n \to \mathbb{W}_m \Omega_K ^n$ decomposes into a direct product of some copies of maps of $p$-typical de Rham-Witt forms $W_s \Omega_R ^n \to W_s \Omega_K  ^n$ (for various finite values of $s$) by ~\eqref{eqn:p-typical-decom}. Hence, it suffices to prove the injectivity for the $p$-typical de Rham-Witt complex. But, it was proven, for instance by M. Gros \cite[Proposition 5.1.2]{Gros}, that for any smooth $k$-scheme $X$, the Cousin complex of $W_s \Omega_X ^n$ is a resolution of $W_s \Omega_X^n$. In particular, each $W_s \Omega_R ^n \to W_s \Omega_K ^n$ is injective. This completes the proof of the proposition.
\end{proof}

\section{The additive higher Chow groups} \label{sec:ACH}
In this section, we recall the definitions of higher Chow groups and additive higher Chow groups. We shall prove some basic properties of additive Chow groups which will be used in the main proofs. We fix an arbitrary base field $k$.

\subsection{Higher Chow groups}\label{sec:HCG}
We recall (see \cite{Bl1}, \cite{Totaro}) the definition of higher Chow groups. Let $X \in \Sch_k ^{\ess}$ be equidimensional. Let $\P_k^1=\Proj\, k[Y_0,Y_1]$ and $\square^n=(\P_k^1\setminus\{1\})^n$. Let $(y_1, \cdots, y_n) \in \square^n$ be the coordinates. A \emph{face} of $\square^n$ is a closed subscheme defined by a set of equations of the form $y_{i_1} = \epsilon_1, \cdots, y_{i_s} = \epsilon_s$, where $\epsilon_j \in \{ 0, \infty\}$. For $1 \leq i \leq n$ and $\epsilon =0, \infty$, we let $\iota_i ^{\epsilon}: \square^{n-1} \inj \square^n$ be the closed immersion given by $(y_1, \cdots, y_{n-1}) \mapsto (y_1, \cdots, y_{i-1}, \epsilon, y_{i}, \cdots, y_{n-1})$. Its image gives a codimension $1$ face.

Let $q, n \geq 0$. Note that when $X$ is obtained by localizing at a non-closed point, the notion of dimensions for closed subschemes of $X \times \square^n$ could be ambiguous but the codimensions are well-defined. We keep this in mind and in what follows, we use codimensions only unless some ambiguity appears.

Let $\un{z}^q (X, n)$ be the free abelian group on the set of integral closed subschemes of $X \times \square^n$ of codimension $q$, that intersect properly with $X \times F$ for each face $F$ of $\square^n$. We define the boundary map $\partial_i ^{\epsilon} (Z):= [ ({\rm Id}_X \times \iota_i ^{\epsilon})^* (Z)]$. This collection of data gives a cubical abelian group $(\un{n} \mapsto \un{z} ^q (X, n))$ in the sense of \cite[\S 1.1]{KL}, and the groups $z^q (X, n) := \un{z} ^q (X, n) / \un{z} ^q (X, n)_{\rm degn}$ (in the notations of \emph{loc.cit.}) give a complex of abelian groups, whose boundary map at level $n$ is given by $\partial:= \sum_{i=1} ^n (-1)^i (\partial_i ^{\infty} - \partial_i ^0)$. The homology $\CH ^q (X, n):= {\rm H}_n (z^q (X, \bullet), \partial)$ is called the higher Chow group of $X$.

\subsection{Additive higher Chow groups}\label{sec:additive complex}
We recall the definition of additive higher Chow groups from \cite[\S 2]{KP2}. Let $X\in \Sch_k^{\ess}$ be equidimensional. Let $\A^1=\Spec k[t]$,  $\G_m=\Spec k[t,t^{-1}]$, and $\ov{\square} = \mathbb{P}^1_k$. For $n \ge 1$, let $B_n = \mathbb{A}^1 \times \square^{n-1}$, $\ov{B}_n = \A^1 \times \ov{\square}^{n-1}$ and $\widehat{B}_n = {\P}^{1} \times \ov{\square}^{n-1} \supset \ov{B}_n$, where $\square = \ov{\square} \setminus \{1\}$. 
Let $(t, y_1, \cdots , y_{n-1})\in \ov{B}_n$ be the coordinates.

On $\ov{B}_n$, define the Cartier divisors $F_{n,i} ^1 := \{ y_i = 1 \}$ for $1 \leq i \leq n-1$,  $F_{n,0} := \{ t = 0 \}$, and let $F_n ^1 := \sum_{i=1} ^{n-1} F_{n, i} ^1$. A {\em face} of $B_n$ is a closed subscheme defined by a set of equations of the form $y_{i_1}=\epsilon_1, \cdots,  y_{i_s}=\epsilon_s$ where $\epsilon_j\in\{0,\infty\}.$ For $1 \leq i \leq n-1$ and $\epsilon=0, \infty$, let $\iota_{n,i} ^{\epsilon}\colon B_{n-1}\inj B_n$ be the inclusion $(t,y_1,\cdots, y_{n-2})\mapsto (t,y_1,\cdots, y_{i-1}, \epsilon,y_i,\cdots, y_{n-2}).$ Its image is a codimension $1$ face.

The additive higher Chow complex is defined similarly using the schemes $B_n$ instead of $\square^n$, but together with proper intersections with all faces, we impose additional conditions called the \emph{modulus conditions}, that control how the cycles should behave at ``infinity''.

Let $X \in \Sch_k ^{\ess}$, and let $V$ be an integral closed subscheme of $X \times B_n$. Let $\ov V$ denote the Zariski closure of $V$ in $X \times \ov{B}_n$ and let $\nu \colon {\ov V}^N \to {\ov V} \subset X \times \ov{B}_n$ be the normalization of $\ov V$. Let $m \ge 0$ and $n \geq 1$ be integers. As in \cite[Definition 2.1]{KP2}, we say that $V$ satisfies the \emph{modulus $m$ condition} on $X \times B_n$, if we have $(m+1)[{\nu}^*(F_{n,0})] \le [\nu^*(F^1_n)]$  in the free abelian group of 
Weil divisors on ${\ov V}^N$. (N.B. When $n=1$, we have $F_1 ^1 = \emptyset$, so it means $\nu^* (F_{1,0}) = 0$, or $\{ t = 0 \} \cap \ov{V} = \emptyset$.) If $V$ is a cycle on $X \times B_n$, we say that $V$ satisfies the modulus $m$ condition if each of its irreducible components satisfies the modulus $m$ condition. When $m$ is understood, often we just say that $V$ satisfies the modulus condition. N.B. Since $F_{n,0} = \{ t = 0 \} \subset \ov{B}_n$, replacing $\ov{B}_n$ by $\widehat{B}_n$ in the definition does not change the nature of the modulus condition on $V$.

Recall from \cite[Definitions 2.5, 2.6]{KP2} the following.
Let $X \in \Sch_k ^{\ess}$ be equidimensional. Let $m \ge 0$ and $n,q \ge1$ be integers. We let $\un{\TZ}^q (X, 1; m)$ be the free abelian group on integral closed subschemes $Z$ of $X \times \mathbb{A}^1$ of codimension $q$, satisfying the modulus condition.

For $n>1$, let $\un{\TZ}^q (X, n; m)$ be the free abelian group on integral closed subschemes $Z$ of $X \times B_n$ of codimension $q$ such that (1) each face $F$ of $B_n$, $Z$ intersects  $X \times F$ properly on $X \times B_n$ and (2) $Z$ satisfies the modulus $m$ condition on $X \times B_n$.   

For each $ 1 \leq i \leq n-1$ and $\epsilon= 0, \infty$, let $\partial_i ^\epsilon (Z) :=[ ({\rm Id_X}  \times\iota_{n, i}^{\epsilon})^*(Z)]$. The proper intersection with faces ensures that $\partial_i ^{\epsilon} (Z)$ are well-defined.

The cycles in $\un{\TZ}^q (X, n;m)$ are called the \emph{admissible cycles} (or, often the \emph{additive higher Chow cycles, or additive cycles}). This gives the cubical abelian group $(\un{n}\mapsto \un{\TZ}^{q}(X, n+1; m))$ in the sense of \cite[\S 1.1]{KL}. Using the containment lemma \cite[Proposition 2.4]{KP}, that each face $\partial_i ^{\epsilon} (Z)$ lies in $\un{\TZ} ^q (X, n-1; m)$ is implied from $(1)$ and $(2)$. 

The {\em additive higher Chow complex}, or just the \emph{additive cycle complex}, $\TZ^q(X, \bullet ; m)$ of $X$ in codimension $q$ with modulus $m$ is the non-degenerate complex associated to the cubical abelian group $(\un{n}\mapsto \un{\TZ}^{q}(X, n+1; m))$, {i.e.}, $\TZ^q(X, n; m)$ is the quotient $ \un{\TZ}^q(X, n; m)/{\un{\TZ}^q(X, n; m)_{\dgn}}.$

The boundary map of this complex at level $n$ is given by $\partial := \sum_{i=1} ^{n-1} (-1)^i (\partial^{\infty}_i - \partial^0_i)$, and it satisfies $\partial ^2 = 0$. The homology $\TH^q(X, n; m): = {\rm H}_n (\TZ^q(X, \bullet ; m))$ for $n \ge 1$ is the {\it additive higher Chow group} of $X$ with modulus $m$. When $X = \Spec(A)$ is affine, we usually write $\TH^q(X, n; m)$ as $\TH^q(A, n; m)$. In this paper, we study only the Milnor range additive chow groups $\{\TH^n(X,n;m)\}_{m \ge 0, n \ge 1}$.

\subsection{Some properties}\label{sec:ACH-prop}
Like Bloch's higher Chow groups, the additive higher Chow groups are also equipped with pull-back map associated to flat morphisms and push-forward map associated to proper morphisms between schemes. Another less obvious property we need in this paper is the following.

\begin{prop}\label{prop:cap product}
Let $R$ be a smooth $k$-algebra essentially of finite type. Then there is a natural cap product map
$$
\cap_R \colon  \TH^q (R, n; m) \otimes_{\mathbb{Z}} \CH^p (R, n') \to \TH^{p+q} (R, n+n'; m).
$$ 
This commutes with the pull-back and push-forward whenever they exist. This is given by $a \cap_R b = \Delta_R ^* (a \times b)$, where $\Delta_R^*$ is the pull-back via the diagonal map $\Delta_R \colon \Spec (R) \to \Spec (R) \times \Spec(R)$.
\end{prop}

\begin{proof}
This is an immediate consequence of \cite[Theorem~3.12]{KPv}.
\end{proof}

\subsection{Subcomplexes associated to some algebraic subsets}\label{sec:subcx ass}

Let $X \in \Sch_k ^{\ess}$ be equidimensional. Here are some subgroups of $\TZ^q (X, n;m)$ with a finer intersection property with a given finite set $\mathcal{W}$ of locally closed algebraic subsets of $X$:

\begin{defn}[{\emph{cf.} \cite[Definition 4.2]{KP}}]\label{defn:subcx ass}
Define $\un\TZ^q _{\mathcal{W}} (X, n;m)$ to be the subgroup of $\un\TZ^q (X, n;m)$ generated by integral closed subschemes $Z \subset X \times B_n$ that additionally satisfy 
\begin{equation}\label{eqn:add Chow W}
{\rm codim}_{W \times F} (Z \cap (W \times F)) \geq q  \mbox{ for all } W \in \mathcal{W} \mbox{ and all faces } F \subset B_n.
\end{equation} 
The groups $ \un\TZ^q _{\mathcal{W}} (X, n+1;m)$ for $n \geq 0$ form a cubical subgroup of $(n \mapsto \un\TZ^q (X, n+1;m))$ and they give the subcomplex $\TZ^q _{\mathcal{W}} (X, \bullet;m) \subset \TZ^q (X, \bullet;m)$ by modding out by the degenerate cycles. The homology groups are denoted by $\TH^q_{\mathcal{W}} (X, n;m)$.
\end{defn}

\subsection{Atlas for semi-local $k$-schemes}

Recall that a semi-local $k$-algebra $R$ essentially of finite type is of the form $R = \mathcal{O}_{X,\Sigma}$, where $X$ is a quasi-projective $k$-scheme and $\Sigma \subset X$ is a finite set of points (not necessarily closed). The pair $(X, \Sigma)$ will be called \emph{an atlas for} $V= \Spec (R)$. An \emph{affine open subatlas} $(Y,\Sigma)$ of $(X,\Sigma)$ for $V$ is an atlas for $V$ such that $Y \subset X$ is an affine open subset. A \emph{smooth atlas} $(X, \Sigma)$ is an atlas given by a smooth $X$. Note that since $\Sigma$ is finite and $X$ is quasi-projective, we can always choose an affine atlas for $V$. If $R$ is smooth over $k$, then we can find a smooth affine atlas.

\subsection{The $\fs$ and $\sfs$-cycles}\label{section:SFS}
We recall the following notions of fs morphisms, fs-cycles and sfs-cycles from \cite[\S~2]{KP sfs}. We also recall some properties of these cycles.

\begin{defn}[{\cite[Definition 2.6]{KP sfs}}]\label{defn:fs}
Let $X, Y \in \Sch_k ^{\ess}$. When $Y$ is irreducible, we say that a morphism $Y \to X$ of $k$-schemes is \emph{fs over $X$} (or simply \emph{fs} when $X$ is clear), if it is finite and the map $Y \to X_i$ is surjective, where $X_i \subset X$ is an irreducible component in which the image of the entire $Y$ lies. 

When $Y$ is not necessarily irreducible, we say $Y \to X$ is \emph{fs over $X$} if for each irreducible component $Y_j \subset Y$, the induced map $Y_j \to X$ is fs over $X$. 

We generalize it a bit further: let $f: Y \to X$ be a morphism in $\Sch_k ^{\ess}$ and let $U \to X$ be a flat morphism. We say that $Y \to X$ is \emph{fs over $U$}, if the fiber product $f' : Y \times_X U \to U$ is fs.

These notions behave well under base change and finite push-forwards. See \cite[Lemmas 2.7, 2.8]{KP sfs}.
\end{defn}

Let $X \in \Sch_k ^{\ess}$ be equidimensional. Let $m \ge 0$ and
$n \ge 1$ be integers. For $1\le j \le n$, let $\pi_j \colon B_n \to B_{j}$ be the projection given by $(t, y_1, \ldots, y_{n-1}) \mapsto (t, y_1, \ldots, y_{j-1})$. For an irreducible closed subscheme $Z \subset X \times B_n$, let $Z^{(j)} =({\rm Id}_X \times  \pi_j)(Z)$. This $Z^{(j)}$ is in general not a closed subscheme of $X \times B_j$. However, if $Z$ is finite over $X$, then the morphisms in the sequence $Z = Z^{(n)} \to Z^{(n-1)} \to \cdots \to Z^{(1)} \to X$ are all finite, so each $Z^{(j)}$ is closed in $X \times B_j$. If $Z$ is a cycle on $X \times B_n$, we extend it $\mathbb{Z}$-linearly. Recall now the following
from \cite[Definitions 2.11, 2.14]{KP sfs}.

\begin{defn}\label{defn:sfs}
Let $X = \Spec(A)$ be an essentially of finite type
smooth affine $k$-scheme and let $\Sigma \subset X$ be a finite set of points. 
Let $V= \Spec (\mathcal{O}_{X,\Sigma})$.

\begin{enumerate}
\item A cycle $\alpha \in \TZ^n _{\Sigma} (V, n;m)$ is said to be an $\fs$-cycle if each irreducible component of $\alpha$ is fs over $V$. The subgroup of $\fs$-cycles is denoted by $\TZ^n_{\fs} (V, n;m)$.
\item A cycle $\alpha \in \TZ^n_{\Sigma} (V, n;m)$ is said to be an $\sfs$-cycle if it is an $\fs$-cycle, and each irreducible component of $\alpha ^{ (j)}$ is smooth over $k$ for all $1 \leq j \leq n$. The subgroup of $\sfs$-cycles is 
denoted by $\TZ^n_{\sfs} (V, n;m)$.
\end{enumerate}
\end{defn}

We have the following characterization of \sfs-cycles over the semi-local schemes.

\begin{prop}\label{prop:sfs cycles}
Let $V = \Spec(R)$ be a semi-local scheme which is essentially of finite type and smooth over $k$. Let $\Sigma$ be the set of closed points of $V$.  Then an irreducible cycle $Z \in \TZ^n _{\Sigma} (V, n;m)$ is an $\sfs$-cycle if and only if there is a smooth affine atlas $(X = \Spec (A),\Sigma)$ for $V$ and an irreducible cycle $\bar{Z} \in \TZ^n_\Sigma(X, n;m)$ for which the following hold.
\begin{enumerate}
\item $Z= \bar{Z}_V = \bar{Z} \times_X V$.
\item $\bar{Z}$ is closed in $X \times {\widehat{B}}_n$, contained in $ X \times \A^1 \times (\A^1 )^{n-1}$, and does not intersect any proper face $F \subset \square^{n-1}$. In particular, $\bar{Z}$ is fs over $X$.
\item For each $1 \le j \le n$, each $\bar{Z}^{(j)} = ({\rm Id}_X \times \pi_j) (\bar{Z}) \subset X \times B_j$ is an irreducible closed subscheme, where $\pi_j$ is given by $(t, y_1, \ldots, y_{n-1}) \mapsto (t, y_1, \ldots, y_{j-1})$. For the coordinate rings $k[{\bar{Z}}^{(j)}] = {A[t, y_1, \ldots, y_{j-1}]}/{I({\bar{Z}}^{(j)})}$, for $a= \ov{t}$, $b_j = \ov{y}_j$ for $1 \leq j \leq n-1$, we have a sequence of finite extensions of integral domains, $A \subset A[a] \subset A[a, b_1] \subset \cdots \subset A[a, b_1, \ldots, b_{n-1}],$ such that each ring in the sequence is smooth over $k$.
\item There are irreducible monic polynomials $P(t) \in A[t]$ and $Q_j(y_j) \in A[a, b_1, \ldots , b_{j-1}][y_j]$ in $y_j$ for $1 \le j \le n-1$ such that $$A[a] = {A[t]}/{(P(t))} \mbox{  and  } A[a, b_1, \ldots , b_j] = 
{A[a, b_1, \ldots , b_{j-1}][y_j]}/{(Q_j(y_j))}.$$
\end{enumerate}
\end{prop}

\begin{proof}
For the $(\Leftarrow)$ direction, one sees that the existence of a smooth atlas $(X,\Sigma)$ for $V$ and a cycle $\bar{Z} \in \TZ^n_\Sigma(X, n; m)$ satisfying (1)$\sim$(4) imply that $Z \in \TZ^n _\Sigma (V, n; m)$ is an $\sfs$-cycle over $V$. So we need to prove the converse $(\Rightarrow)$.

Suppose that $Z \in \TZ^n _{\Sigma} (V, n ; m)$ is an irreducible sfs-cycle. Since it is an fs-cycle by definition, applying \cite[Lemma 2.9, Proposition 2.13, Lemma 2.21]{KP sfs}, we see that there is a smooth affine atlas $(X, \Sigma)$ for $V$ such that the closure $\bar{Z}$ of $Z$ in $X \times B_n$ is an irreducible admissible cycle in $\TZ^n _\Sigma (X, n;m)$ which satisfies (1) and (2). 

Since $\bar{Z} \to X$ is fs over $X$, there is a sequence of finite maps $\bar{Z} = {\bar{Z}}^{(n)} \to \cdots \to {\bar{Z}}^{(1)} \to X = \Spec(A)$ such that each ${\bar{Z}}^{(j)}$ is fs over $X$. Furthermore, each $\bar{Z}^{(j)} _V $ is smooth over $k$ because $Z$ is an $\sfs$-cycle.

We now want to show that there is a smooth affine open subatlas $(U,\Sigma)$ of $(X,\Sigma)$ for $V$ such that the restrictions $\bar{Z}^{(j)} _U$ to $U$ $1 \leq j \leq n-1$ are all smooth over $k$. To prove it, set $A^j = k [\bar{Z} ^{(j)}]$. As each $\bar{Z} ^{(j)}_V$ is smooth over $k$ and finite over $V$, we see that $\Omega_{A^j/k} ^1$ is a finite $A$-module such that $\Omega_{S_\Sigma^{-1} A^j/k} ^1$ is a free $R$-module, where $R = S_\Sigma^{-1} A$ for the multiplicative subset $S_\Sigma \subset A$ corresponding to the finite set of closed points $\Sigma$. Hence, there is an affine open neighborhood $U$ of $\Sigma$ in $X$ such that each $\Omega^1 _{A^j/k} |_U$ is a free $k[U]$-module. Replacing the given atlas $(X, \Sigma)$ by the new $(U,\Sigma)$, we may thus assume that each $\bar{Z}^{(j)}$ is smooth over $k$. Hence, we have proven (3).

To prove (4), we observe that if we replace $A$ by its semi-local ring $R$ via localization, then we get a sequence of finite extensions of smooth semi-local rings. Note that such rings are UFDs by Auslander-Buchsbaum and $R[a] = R[t]/ I_1$ for the prime ideal $I_1= I ({Z}^{(1)})$. Since $\dim (R)= \dim (R[a]) = \dim (R[t]) -1$, we have ${\rm ht} (I_1) = 1$. But, $R[t]$ is a UFD so that $I_1$ must be principal by \cite[Theorem 20.1, p.161]{Matsumura}. Thus, if $P(t) \in R[t]$ is a monic irreducible polynomial of $a$, then we have $I_1 = (P(t))$. Similarly, we have $R[a,b_1] = R[a][y_1]/I_2$ for the prime ideal $I_2 = I({Z} ^{(2)})$, and since $R[a]$ is a UFD,  so is $R[a][y_1]$. Hence, we obtain $I_2 = (Q_1 (y_1))$ in the same way. 

Continuing as above, we get the irreducible monic polynomials $P(t) \in R[t]$ and $Q_j(y_j) \in R[a, b_1, \ldots , b_{j-1}][y_j]$ for which the property (4) holds over $R$. Choose lifts of these polynomials over $A$, and then there is a localization $A' = A[f^{-1}]$ for some $f \in A$ with the inclusions $A \inj A' \inj R$ such that the property (4) holds over $A'$. Replacing $(X,\Sigma)$ by $(\Spec (A'), \Sigma)$, we obtain a new atlas for $V$ for which all of (1)$\sim$(4) hold.
\end{proof}

\subsection{Additive higher Chow groups of \fs \ and \sfs-cycles}\label{section:ACH-sfs}
Let $m \ge 0$ and $n \geq 1$ be two integers. Recall from \cite[\S~2.6]{KP sfs} that for a smooth and essentially of finite type semi-local $k$-scheme $V$ with the set of closed points $\Sigma$, the additive higher Chow groups of fs and sfs-cycles are defined as follows. We let

\[
\TH^n _{ \fs} (V, n; m) = \frac{{\ker}(\partial: \TZ^n _{\fs}(V, n;m) \to \TZ^n(V, n-1;m))}{{\rm im}  (\partial: \TZ^n (V, n+1;m) \to \TZ^n(V, n;m)) \cap \TZ^n _{\fs} (V, n;m)}.
\]
\[
\TH^n _{\sfs} (V, n; m) = \frac{{\ker}(\partial: \TZ^n _{\sfs}(V, n;m) \to \TZ^n(V, n-1;m))}{{\rm im} (\partial: \TZ^n (V, n+1;m) \to  \TZ^n(V, n;m)) \cap \TZ^n _{ \sfs} (V, n;m)}.
\]

We have a sequence of homomorphisms
 \begin{equation}\label{eqn:moving arrows}
\TH^n _{\sfs} (V, n;m) \to \TH^n _{ \fs} (V, n; m) \to {\TH}^n _{\Sigma} (V, n; m) \to \TH^n (V, n;m).
 \end{equation}

The last arrow in ~\eqref{eqn:moving arrows} is an isomorphism by \cite[Theorem 4.10]{KP3}. The main result of \cite{KP sfs} is the following. This will be another key ingredient in the proof of our main theorem.

\begin{thm}[{\cite[Theorem 1.1]{KP sfs}}]\label{thm:sfs-TCH}
Let $k$ be an infinite perfect field. Let $V = \Spec(R)$ be a smooth semi-local $k$-scheme essentially of finite type with the set of closed points $\Sigma$. Let $m \ge 0$ and $n \ge 1$ be any two integers. Then all maps in ~\eqref{eqn:moving arrows} are isomorphisms.
\end{thm}

\section{Witt-complex structure on additive higher Chow groups}\label{sec:WCADC}
Let $k$ be an arbitrary field. Let $R$ be a regular (not necessarily semi-local) $k$-algebra essentially of finite type. Our goal here is to define the structure of a restricted Witt-complex on the additive higher Chow groups of $\Spec(R)$ over $R$. We need the results of \cite{KP2} and \cite{KP3}. We write $V = \Spec(R)$ and $\TH^\bullet(R;m) = {\underset{n \ge 1}\bigoplus} \TH^n(R, n;m)$ for any $m \ge 0$. Then $(\TH^\bullet(R;m))_{m \in \N}$ is a projective system of graded abelian groups.

Before we describe the restricted Witt-complex structure on the additive higher Chow groups, we recall the following baby case from \cite[Proposition~7.6]{KP3}. For any polynomial $f(t) \in R[t]$ with $f(0) \in R^{\times}$, the closed subscheme $V(f(t)) \subset V \times \A^1$ defines an additive cycle in $\TH^1(R,1;m)$ for any $m \ge 0$. We denote this cycle by $\Gamma_{(f(t))}$. As part of the restricted Witt-complex structure over $R$ on the additive higher Chow groups, the map
\begin{equation}\label{eqn:Structure-map}
\lambda_R \colon \W_m(R) \to \TH^1(R,1;m)
\end{equation}
is determined by $\lambda_R([a]) = \Gamma_{(1-at)}$ for $a \in R$. This uniquely defines $\lambda_R$ because every element in $\W_m(R)$ has a unique expression $w = (a_i) = \sum_{i=1} ^m  V_i([a]_{\lfloor{m/i}\rfloor})$ (see ~\eqref{eqn:WF-1}). In particular, $\lambda_R(w) = \sum_{i=1} ^m  \Gamma_{(1- a_it^i)}$. This map is an isomorphism by \cite[Theorem~7.12]{KP3} if $R$ is a unique factorization domain (e.g., $R$ is regular semi-local).

\subsection{Witt-complex structure for smooth algebras}\label{sec:W-perf}
We now describe the remaining part of the Witt-complex structure on the additive higher Chow groups. 

\begin{lem}\label{lem:WC-structure-perf}
Assume that $R$ is a smooth $k$-algebra. 
Then $(\TH^\bullet(R; m))_{m \in \N}$ is a restricted Witt-complex over $R$.
\end{lem}

\begin{proof}
We assume first that $k$ is perfect. 
Let $V= \Spec (R)$. 
If $p \neq 2$, then the lemma is  \cite[Theorem~1.2]{KP3}. So we assume that $p =2$. In this case, if one closely follows the arguments of \cite{KP2} and \cite{KP3}, then one will notice that there is exactly one place where we used the assumption $p \neq 2$, namely, to show that $\delta^2 = 0$ for the cycle-theoretic differential operator $\delta$ on $\TH^\bullet(R;m) = \TH^\bullet(V;m)$ (see \cite[Definition~6.2]{KP3}). However, it turns out that this vanishing actually follows from the other properties of $\TH^\bullet(V;m)$ proven in \emph{ibids.} without using any assumption on $p$. We explain this below.

Let $\sR \colon \TH^\bullet(V;m+1) \to \TH^\bullet(V;m)$ denote the obvious restriction map. Let $F_r \colon \TH^\bullet(V;rm+r-1) \to \TH^\bullet(V;m)$ denote the Frobenius map given by the push-forward via the map $\phi_r \colon \A^1 \to \A^1$, where $\phi_r(t) = t^r$ (see \cite[\S~7.1]{KP3}). Let $V_r \colon \TH^\bullet(V;m) \to \TH^\bullet(V;rm+r-1)$ be the Verschiebung map given by the flat pull-back via $\phi_r$. It was shown in the proofs of \cite[Theorem~5.13]{KP2} and \cite[Theorem~7.1]{KP3} that $F_r, V_r$ and $\delta$ satisfy the identity
\begin{equation}\label{eqn:WC-structure-perf-0}
F_r \delta V_r = \delta \ \forall \ r \ge 1
\end{equation}
on the additive higher Chow groups of $V$.

As part of the proof of this identity, it was also shown (as the first claim) in the proof of \cite[Theorem~5.13, Part~2, p.~45]{KP2} that $F_r$ and $\delta$ satisfy the identity
\begin{equation}\label{eqn:WC-structure-perf-1}
rF_r \delta = \delta F_r \ \forall \ r \ge 1
\end{equation}
on the additive higher Chow groups of $V$. Furthermore, the proofs of the above two identities do not use any condition on $k$. (N.B. Recall from \cite[Lemma~4.3]{Hesselholt2} that ~\eqref{eqn:WC-structure-perf-1} is one of the properties of a restricted Witt-complex.) We now show that these two identities are enough to conclude that $\delta^2 = 0$.

Suppose in general that $m \ge 0$ is an integer and let $\alpha \in \TH^q (V, n;m)$ be a cycle class. Then the normalization theorem \cite[Theorem 3.2]{KP3} says that $\alpha$ is represented by a cycle (also denoted by $\alpha$) such that $\partial_i ^{\epsilon} (\alpha) = 0$ for all $1 \leq i \leq n-1$ and $\epsilon \in \{ 0, \infty\}$. It follows therefore from \cite[Lemma~6.5]{KP3} that $2 \delta^2 (\alpha) = 0$. Combining this with ~\eqref{eqn:WC-structure-perf-1}, we get $\delta F_2 \delta(\alpha) = 2 F_2 \delta^2(\alpha) = F_2(2 \delta^2(\alpha)) = 0$. Since $\alpha$ and $m \ge 0$ were arbitrary, we conclude that $\delta F_2 \delta = 0$ on $\TH^\bullet(V;m)$ for all $m \ge 0$. In particular, $\delta F_2 \delta V_2 = 0$ on $\TH^\bullet(V;m)$ for all $m \ge 0$. 

We now use ~\eqref{eqn:WC-structure-perf-0} to conclude $\delta^2 = \delta (F_2 \delta V_2) = \delta F_2 \delta V_2 = 0$. This finishes the proof of the 
lemma when $k$ is perfect.

To prove the lemma when $k$ is imperfect, the only thing we need to recall for the 
reader is that in the construction of the Witt-complex structure on additive higher Chow groups in \cite{KP2} and \cite{KP3}, the perfectness of the ground field 
was required exactly at one place, namely, to construct the product structure 
on the additive Chow groups of smooth schemes. More specifically, this was needed only
in the proof of \cite[Lemma~2.11]{KP2}. However, as pointed out by the
referee, the proof of the latter result actually goes through without the
perfectness assumption. The improved version is \lemref{lem:prod-imperfect}
below. The rest of the proof of the Witt-complex structure is identical
to the perfect base field case.
\end{proof}

We let $B_{2, n} = \A^2 \times \square^{n-2}$ and
$\ov{B}_{2, n} = \A^2 \times (\P^1)^{n-2}$ for $n \ge 2$.
Recall the notion on modulus $(m_1, m_2)$ condition from
\cite[Definition~2.9]{KP2}.

\begin{lem}\label{lem:prod-imperfect}
Let $X$ and $Y$ be smooth $k$-schemes, and let $V_1$ and $V_2$ be irreducible cycles satisfying 
the modulus $m_1$ and $m_2$ conditions on $X \times B_{n_1 }$ and 
$Y \times B_{n_2}$, respectively. Then, $V_1 \times V_2$ satisfies the 
modulus $(m_1, m_2)$ condition on $X \times Y \times B_{2, n_1 + n_2}$.
\end{lem}
\begin{proof}
Let $W \subset V_1 \times V_2$ be an irreducible component. It is enough to show that $W$ satisfies the modulus $(m_1, m_2)$ condition. Let $\ov{V}_1 \subset X \times \ov{B}_{n_1}$ and $\ov{V}_2 \subset Y \times \ov{B}_{n_2}$ be the Zariski closures of $V_1$ and $V_2$, respectively. 
Then $\ov{W} \subset X \times Y \times \ov{B}_{2, n_1 + n_2}$ is an irreducible component of $\ov{V}_1 \times \ov{V}_2$.
Let $\ov{V} = (\ov{V}_1 ^N \times \ov{V}_2 ^N)_\red$. 

We now note that $\ov{V}_1 \times \ov{V}_2$ may be neither reduced nor
irreducible. Nonetheless, it is equidimensional and the projection
$\ov{W} \to \ov{V}_i$ is dominant for each $i$.
It follows that $\ov{V}^N$ is equidimensional with $\ov{W}^N$ one of its
irreducible components and the projection $\ov{W}^N \to \ov{V}^N_i$ is
dominant for each $i$. This gives rise to a commutative diagram

\begin{equation}\label{eqn:prod-imperfect-0}
\xymatrix@C.8pc{
\ov{W}^N \ar[r]^-{v_1} \ar[d]_-{v_2} \ar[dr]^-{\nu_{\ov{W}}} 
& \ov{V}^N_1 \times Y \times \ov{B}_{n_2} 
\ar[d]^-{\iota_1} \\
X \times \ov{B}_{n_1} \times \ov{V}^N_2 \ar[r]^-{\iota_2} &
X \times Y \times \ov{B}_{2, n_1 + n_2},}
\end{equation}
where $\iota_1 = (\nu_{\ov{V}_1} \times {\rm Id})$ and $\iota_2 = 
({\rm Id} \times \nu_{\ov{V}_2})$. 
Let $(t_1, t_2, y_1, \cdots , y_{n-1}) \in \mathbb{A}^2 \times \left( \mathbb{P} ^1 \right) ^{n-1}$ be the coordinates. Then, for $D^1: =   \sum_{i=1 } ^{n_1 -1} \{ y_i = 1 \}  - (m_1 +1) \{ t_1 = 0 \}$, $D^2:= \sum_{i={n_1}} ^{ n-1} \{ y_i = 1 \} - (m_2 +1) \{ t_2 = 0 \}$, and $n-1 = (n_1 -1) + (n_2 -1)$, we have 
\begin{equation}\label{eqn:prod-imperfect-1}
\iota^*_1 (D^1) \geq 0 \ \
\mbox{and} \ \ \iota^*_2 (D^2) \geq 0.
\end{equation}
We should note here that $\ov{V}^N_1 \times Y \times \ov{B}_{n_2}$ and
$X \times \ov{B}_{n_1} \times \ov{V}^N_2$ are normal (this is ensured by the smoothness of $X$ and $Y$).

Since $\ov{W}^N \to \ov{V}^N_i$ is dominant, we see that 
$v^*_1 \circ \iota^*_1 (\sum_{i=1 } ^{n_1 -1} \{ y_i = 1 \})$
and $v^*_1 \circ \iota^*_1((m_1 +1) \{ t_1 = 0 \})$
are effective Cartier divisors on $\ov{W}^N$.
It follows from ~\eqref{eqn:prod-imperfect-1} that 
$v^*_1 \circ \iota_1^* (D^1) \geq 0$.
Similarly, we have $v^*_2 \circ \iota^*_2(D^2) \ge 0$. 
Combining the two, we get $\nu^*_{\ov{W}}(D^1+ D^2) \ge 0$, which is the
modulus $(m_1, m_2)$ condition for $W$. 
\end{proof}

Recall from \cite[Theorem~4.5]{KP3} that for any map $f \colon X' \to X$ of $k$-schemes, where $X$ is smooth and affine (or projective) over $k$, there is a pull-back map $f^* \colon \TH^p(X, q;m) \to \TH^p(X', q;m)$. This is compatible with respect to composition of two morphisms. Moreover, the existence of $f^*$ assumes no condition on $k$. This respects the Witt-structures too:

\begin{lem}\label{lem:CW-Pull-back-RWC}
Let $k$ be a field and let $f: \Spec(R') \to \Spec(R)$ be a morphism between smooth affine $k$-schemes essentially of finite type. Then $f^* \colon (\TH(R;m))_{m \in \N} \to (\TH(R';m))_{m \in \N}$ is a morphism of restricted Witt-complexes over $R$.
\end{lem}

\begin{proof}
The lemma is equivalent to proving the following.
\begin{enumerate}
\item [(1)] $f^*$ commutes with products, \ \ (2) $f^*$ commutes with differentials,
\item [(3)] $f^*$ commutes with $\sR, \ F_r$ and $V_r$, \ \ and  \ \ (4) $f^*$ commutes with $\lambda_R$.
\end{enumerate}

The part (3) is shown in \cite[Theorem~7.1]{KP3} while (4) is evident from ~\eqref{eqn:Structure-map}. We prove (1) and (2).

Let $V = \Spec(R)$ and $V' = \Spec(R')$. To prove (1), it suffices to prove it for irreducible cycles. By \cite[Proof of Theorem~7.1]{KP}, there exists a finite set $\mathcal{W}$ of locally closed subsets of $V$ such that the map $f^*\colon \TZ^q_{\sW}(V, \bullet;m) \to \TZ^q(V', \bullet;m)$ given by $f^*(Z) = [f^{-1}(Z)]$, is well-defined. Here, the group on the left is defined as in Definition \ref{defn:subcx ass}. Choose an irreducible cycle $Z \in \TZ^q_{\sW}(V, \bullet;m)$. We claim that there is a finite set $\mathcal{C}$ of locally closed subsets of $V$ such that the following hold.
\begin{enumerate}
\item [(i)]  $f^*\colon \TZ^q_{\sC}(V, \bullet;m) \to \TZ^q(V', \bullet;m)$ is defined.
\item [(ii)] $Z \boxtimes Z' \in \TZ^q_{\{\Delta_V\}}(V \times V, \bullet;m)$ for all $Z' \in \TZ^q_{\sC}(V, \bullet;m)$.
\item [(iii)] $f^*(Z) \boxtimes f^*(Z') \in  \TZ^q_{\{\Delta_{X'}\}}(V' \times V', \bullet;m)$ for all $Z' \in \TZ^q_{\sC}(V, \bullet;m)$.
\end{enumerate}

Let $Z_f$ be the (finite) collection $\{Z_i \cap (V' \times \A^1 \times F)\}$, where $Z_i$ is an irreducible component of $f^*(Z)$ and $F \subset \square^{n-1}$ is a face. Under the moving lemma (\cite[Theorem 4.10]{KP3}), by the  argument of 
\cite[Lemmas~3.5, 3.10]{KPv}, there exists a finite collection $\sW'$ of locally closed subsets of $\{V \times \A^1 \times \square^{n_i}\}$ such that for $\TZ_{\mathcal{W}'} ^q (V, \bullet;m)$ in the sense of \cite[Definition 5.3]{KP Jussieu}, we have:
\begin{enumerate}
\item [(i)] $f^*\colon \TZ^q_{\sW'}(V, \bullet;m) \to \TZ^q_{Z_f}(V', \bullet;m)$ is well-defined.
\item [(ii)] $Z \boxtimes Z' \in \TZ^q_{\{\Delta_X\}}(V \times V, \bullet;m)$ for all $Z' \in \TZ^q_{\sW'}(V, \bullet;m)$.
\item [(iii)] $Z_i \boxtimes f^*(Z') \in  \TZ^q_{\{\Delta_{V'}\}}(V' \times V', \bullet;m)$ for all $Z' \in \TZ^q_{\sW'}(V, \bullet;m)$ and all irreducible components $Z_i$ of $f^*(Z)$.
\end{enumerate}

Furthermore, it follows also by the moving lemma (\cite[Theorem 4.10]{KP3}) and the argument of \cite[Lemma~3.4]{KPv} that there exists a finite collection $\sC'$ of locally closed subsets of $V$ such that $\TZ^q_{\sW'}(V, \bullet;m) = \TZ^q_{\sC'}(V, \bullet;m)$. Setting $\sC = \sW \cup \sC'$, we get the proof of the claim.

We now consider the commutative diagram
\begin{equation}\label{eqn:JJJJJ}
\xymatrix@C.8pc{
V' \times \A^1 \ar[d]_-{f} & & V'\times \A^1 \times \A^1 \ar[rr]^-{\Delta_{V'}} \ar[d]_-{f} \ar[ll]_-{\mu} & & V' \times V' \times \A^1 \times \A^1  \ar[d]^-{f \times f} \\
V \times \A^1 & & V \times \A^1 \times \A^1 \ar[rr]^-{\Delta_V} \ar[ll]_-{\mu} &  & V \times V \times \A^1 \times \A^1,}
\end{equation}
where $\mu \colon \mathbb{A} ^1 \times \mathbb{A}^1 \to \mathbb{A}^1$ is $(t_1, t_2) \mapsto t_1t_2$.

If we choose any irreducible cycle $Z' \in \TZ^q_{\sC}(V, \bullet;m)$, then it follows from the above claim that $\Delta^*_V(Z \boxtimes Z')$ and $\Delta^*_{V'} \circ (f\times f)^*(Z \boxtimes Z')$ are admissible cycles. Moreover, by the commutativity of the right square in \eqref{eqn:JJJJJ}, we have $\Delta^*_{V'} \circ (f\times f)^*(Z \boxtimes Z')= f^*(\Delta^*_V(Z \boxtimes Z'))$. In particular, $f^* \circ \Delta^*_V(Z \boxtimes Z')$ is an admissible cycle.

Now by \cite[Corollary 5.11]{KP3}, the cycles $\mu_* \circ \Delta^*_{V'} \circ (f\times f)^*(Z \boxtimes Z')$ and $\mu_* \circ \Delta^*_V \circ (f\times f)^*(Z \boxtimes Z')$ are admissible. Since the left square in \eqref{eqn:JJJJJ} is transverse, we get $ f^*(Z \cdot Z')  =  f^* \circ \mu_* \circ \Delta^*_V(Z \boxtimes Z') =  \mu_* \circ \Delta^*_V \circ (f\times f)^*(Z \boxtimes Z')  =  f^*(Z) \cdot f^*(Z').$

Finally, the inclusions $\TZ^q_{\sC}(V, \bullet;m) \inj \TZ^q(V, \bullet;m)$ and $\TZ^q_{\sW}(V, \bullet;m) \inj \TZ^q(V, \bullet;m)$ are quasi-isomorphisms by \cite[Theorem~4.10]{KP3}, and this shows (1).

To prove (2), recall from \cite[Definition~6.2]{KP3} that the differential $\delta=\delta_V \colon \TZ^q(V, n;m) \to \TZ^{q+1}(V,n+1;m)$ is defined as the push-forward with respect to the map 
\begin{equation}\label{eqn:Diff}
\delta_V \colon V \times \G_m \times \square^{n-1} \to V \times \A^1 \times \square^n; 
\end{equation}
\[
\ \ \delta (x, t, y_1, \ldots, y_{n-1}) = 
(x, t, t^{-1}, y_1, \ldots, y_{n-1}) \ \ \mbox{for} \ \ t \neq 0, 1.
\]
Here, $\delta_{V*}(Z)$ is an irreducible admissible cycle if $Z$ is so, by \cite[Lemma 6.3, Proposition 6.4]{KP3}. 

Since the square 
$$
\xymatrix@C.8pc{
V' \times \G_m \times \square^{n-1} \ar[d]_{f} \ar[rr]^-{\delta_{V'}} & & V'\times \A^1 \times  \square^{n} \ar[d]^{f} \\
V \times \G_m \times \square^{n-1} \ar[rr]^{\delta_{V}} & & V\times \A^1 \times  \square^{n}}
$$
is transverse, it follows that $\delta_{V' *} \circ f^*(Z) = f^* \circ \delta_{V *}(Z)$ for every irreducible cycle $Z \in \TZ^q_{\sC}(V, \bullet;m)$. We conclude again that $f^* \circ \delta = \delta \circ f^*$ from the quasi-isomorphism $\TZ^q_{\sC}(V, \bullet;m) \inj \TZ^q(V, \bullet;m)$. The proof of the lemma is now complete.
\end{proof}

\subsection{Witt-complex structure for regular algebras}\label{sec:W-imperf}
Our next goal is to generalize Lemmas~\ref{lem:WC-structure-perf} and ~\ref{lem:CW-Pull-back-RWC} to regular semi-local (not necessarily smooth) 
schemes over a field. We shall do this using the continuity property of the
additive higher Chow groups. We need  some intermediate results.

The following lemma is \cite[Lemma~2.2]{KP}.

\begin{lem}\label{lem:surjm}
Let $f : Y \to X$ be a surjective map of normal integral Noetherian
schemes. Let $D$ be a Cartier divisor on $X$ such that $f^*(D) \ge 0$ on $Y$. 
Then $D \ge 0$ on $X$.
\end{lem}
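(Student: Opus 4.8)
\textbf{Proof plan for Lemma~\ref{lem:surjm}.}
The plan is to reduce the statement to a comparison of valuations along codimension-one points of $X$, and then to transport positivity of the divisor through the surjection $f$ using the compatibility of the local rings. First I would recall that for a Cartier divisor $D$ on a normal integral scheme $X$, the inequality $D \ge 0$ as a Weil divisor is equivalent to the condition that $\mathrm{ord}_x(D) \ge 0$ for every point $x \in X$ of codimension one, where $\mathrm{ord}_x$ is the discrete valuation attached to the local ring $\mathcal{O}_{X,x}$ (which is a DVR since $X$ is normal). So it suffices to fix such a point $x$ and show $\mathrm{ord}_x(D) \ge 0$.

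The key step is then to produce a codimension-one point $y \in Y$ lying over $x$, or more generally a point $y$ whose local ring dominates $\mathcal{O}_{X,x}$, so that the valuation $\mathrm{ord}_y$ on $k(Y) = k(X)$ (both are integral $k$-schemes mapping to one another; but note $f$ need not be generically finite, so I must be slightly careful — $f^* D$ is pulled back to $Y$ and its order along a prime divisor $E \subset Y$ is $e \cdot \mathrm{ord}_x(D)$ where $e$ is the ramification index of $\mathcal{O}_{Y,E}$ over $\mathcal{O}_{X,x}$, once $E$ dominates $x$). Concretely: since $f$ is surjective and dominant, $x \in \overline{f(E)}$ for some prime divisor, but I want $E$ to actually dominate $x$. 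The cleanest way is to take the local ring $\mathcal{O}_{X,x}$, a DVR with fraction field $K = k(X)$, and use that $f$ surjective implies the generic point of the fiber over $x$ is nonempty; choosing a point $y \in Y$ specializing appropriately and then, if needed, passing to a valuation ring of $k(Y)$ dominating $\mathcal{O}_{X,x}$ along the inclusion $\mathcal{O}_{X,x} \hookrightarrow \mathcal{O}_{Y,y}$ given by functoriality. Since $k(X) \subseteq k(Y)$ as fields (by dominance of $f$ and integrality), any such valuation $v$ on $k(Y)$ extending $\mathrm{ord}_x$ satisfies $v(g) = e \cdot \mathrm{ord}_x(g)$ for $g \in k(X)^\times$, with $e \ge 1$; hence $v(f^* D) = e\cdot \mathrm{ord}_x(D)$, where $f^*D$ is a local equation for $D$ pulled back, viewed in $k(Y)$.

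Now I invoke the hypothesis $f^* D \ge 0$ on $Y$: this means $v'(f^*D) \ge 0$ for every codimension-one point / every discrete valuation $v'$ coming from a prime divisor on the normal scheme $Y$. The remaining technical point is that the valuation $v$ constructed above can be taken to be (or to be refined to) such a divisorial valuation on $Y$ — for this one can either work locally and replace $Y$ by an affine open neighborhood of a suitably chosen generic point of the fiber, so that a genuine prime divisor $E \subset Y$ dominates $x$, or appeal to a standard going-down / valuative argument. Either way one concludes $e \cdot \mathrm{ord}_x(D) = v(f^*D) \ge 0$, and since $e \ge 1$ this forces $\mathrm{ord}_x(D) \ge 0$. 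As $x$ was an arbitrary codimension-one point of $X$, we get $D \ge 0$.

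I expect the main obstacle to be the construction of the divisorial valuation $v$ on $Y$ lying over the chosen codimension-one point $x$ of $X$ — that is, genuinely exhibiting a prime divisor $E \subset Y$ (after possibly localizing $Y$) that dominates $x$, so that one may legitimately apply the hypothesis $f^*D \ge 0$, which a priori only controls orders along honest prime divisors of $Y$. This requires using surjectivity of $f$ in an essential way (merely dominant would not suffice, since the fiber over $x$ could then be empty) together with normality of $Y$; the dimension-theoretic bookkeeping, i.e.\ ensuring the component of the fiber one picks has the right codimension, is the delicate part. Everything else is the standard dictionary between Cartier divisors, Weil divisors, and orders at DVRs on normal integral schemes.
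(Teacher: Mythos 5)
The paper does not prove this statement; it is recalled from \cite[Lemma~2.2]{KP}, so there is no internal argument to compare against. Evaluating your plan on its own terms: the reduction to $\mathrm{ord}_x(D)\ge 0$ at each height-one point $x\in X$ is correct, but the step you yourself flag as the ``delicate part'' --- producing a divisorial valuation of $Y$ lying over $x$ --- is a genuine gap, and neither device you gesture at closes it as stated: a generic point of $f^{-1}(x)$ need not have codimension one in $Y$, and going-down is unavailable without flatness or integrality hypotheses, neither of which is assumed here. The gap can be filled: pick any $y\in f^{-1}(x)$ (this is the one place surjectivity enters), note that $f^{\sharp}\colon\sO_{X,x}\hookrightarrow\sO_{Y,y}$ is a local injection with $\sO_{X,x}$ a DVR with uniformizer $\pi$, and apply Krull's principal ideal theorem to the nonzero proper ideal $(f^{\sharp}\pi)\subseteq\sO_{Y,y}$ to obtain a height-one prime $\mathfrak{p}\supseteq(f^{\sharp}\pi)$. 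Normality of $Y$ then makes $(\sO_{Y,y})_{\mathfrak{p}}$ a DVR dominating $\sO_{X,x}$, i.e.\ a divisorial valuation of $Y$ restricting to a positive multiple of $\mathrm{ord}_x$ on $k(X)^\times$, and your comparison of orders finishes.

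You should also notice that the search for a divisorial valuation over $x$ is avoidable. Fix a height-one $x$, write a local equation $u\pi^{n}$ for $D$ at $x$ with $u\in\sO_{X,x}^{\times}$ and $n=\mathrm{ord}_x(D)$, and pick any $y\in f^{-1}(x)$. Because $Y$ is normal, the hypothesis that the Cartier divisor $f^{*}D$ is $\ge 0$ (i.e.\ its associated Weil divisor is effective) is equivalent to its local equations lying in $\sO_{Y,y'}$ at \emph{every} point $y'$, not only the codimension-one ones; in particular $f^{\sharp}(u)\,f^{\sharp}(\pi)^{n}\in\sO_{Y,y}$. But $f^{\sharp}(u)$ is a unit and $f^{\sharp}(\pi)\in\mathfrak{m}_{y}$ since the map of local rings is local and injective, so $n<0$ would force $f^{\sharp}(\pi)$ to be a unit of $\sO_{Y,y}$ --- a contradiction. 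This route uses surjectivity only to guarantee $f^{-1}(x)\neq\emptyset$ and sidesteps the dimension-theoretic bookkeeping entirely.
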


\begin{lem}\label{lem:Elem-comm}
Let $k \inj K$ be an extension of fields. Let $\lambda:R \to R'$ be a faithfully flat morphism of Noetherian $k$-algebras such that $K \inj R'$. Let $X = \Spec(R)$ and $X' = \Spec(R')$. Then the induced map of schemes $X' \times_K \A^1_K \times_K \ov{\square}^{n-1}_K \to X \times_k \A^1_k \times_k \ov{\square}^{n-1}_k$ is also faithfully flat. 
\end{lem}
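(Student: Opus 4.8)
The plan is to reduce the statement to standard facts about base change of flatness and fiber products. First I would observe that the map in question is obtained from $\lambda\colon R\to R'$ by base change along a field extension and by taking products with $\A^1$ and $\ov\square^{n-1}$. Concretely, $X'\times_K\A^1_K\times_K\ov\square^{n-1}_K = X'\times_K\big(\A^1_K\times_K\ov\square^{n-1}_K\big)$ and $X\times_k\A^1_k\times_k\ov\square^{n-1}_k = X\times_k\big(\A^1_k\times_k\ov\square^{n-1}_k\big)$. Since $\A^1_K\times_K\ov\square^{n-1}_K = \big(\A^1_k\times_k\ov\square^{n-1}_k\big)\times_k K$, the whole target and source can be expressed purely in terms of the base ring $k$: the source is $\Spec(R')\times_k\big(\A^1_k\times_k\ov\square^{n-1}_k\big)$ — here I am using that $\Spec(R')\times_K(\ -\ )$ over $K$ equals $\Spec(R')\times_k(\ -\ )$ over $k$ for $k$-schemes that are base-changed from $k$, because $R'$ is a $K$-algebra — and the source map to the target is precisely $\lambda\times_k\id$ where $\id$ is the identity on $T:=\A^1_k\times_k\ov\square^{n-1}_k$.

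The key step is then: faithful flatness is preserved under arbitrary base change, and in particular under the base change $(-)\times_k T$. Since $\lambda\colon R\to R'$ is faithfully flat by hypothesis, the induced morphism $\Spec(R')\times_k T \to \Spec(R)\times_k T$ is faithfully flat. Indeed, on affine pieces $T$ is covered by $\Spec$ of polynomial rings $k[\ldots]$, and for each such open $\Spec B$ the restricted map is $\Spec(R'\otimes_k B)\to\Spec(R\otimes_k B)$, induced by $\lambda\otimes_k B\colon R\otimes_k B\to R'\otimes_k B$, which is faithfully flat because faithful flatness of $\lambda$ is stable under the (flat) base change $-\otimes_k B$. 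Faithful flatness can be checked locally on the target, so gluing over the affine cover of $T$ gives the claim; alternatively one invokes directly that a base change of a faithfully flat morphism of schemes is faithfully flat.

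What remains is the bookkeeping identifying the map written with coordinates over $K$ with the map $\lambda\times_k\id_T$ over $k$; this is a triviality once one notes that for a $K$-algebra $A$ and a $k$-scheme $S$ one has $\Spec(A)\times_K S_K = \Spec(A)\times_k S$, the two fiber products agreeing because $\Spec(A)\to\Spec k$ factors through $\Spec K$. I do not expect any genuine obstacle here; the only mild subtlety is to make sure that $\ov\square^{n-1}$ and $\A^1$ really are base-changed from $k$ (they are, being $(\P^1_k)^{n-1}$ and $\A^1_k$ by definition in \S\ref{sec:additive complex}), so that the product decomposition and the identification $T_K = T\times_k K$ are valid. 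Given all this, the proof is essentially immediate: write both sides as $k$-schemes, recognize the morphism as $\lambda\times_k\id_T$, and invoke stability of faithful flatness under base change.
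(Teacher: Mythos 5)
Your proof is correct and follows essentially the same route as the paper's: both arguments reduce the claim to stability of faithful flatness under base change, after identifying the $K$-level fiber products with $k$-level ones (the paper does this implicitly by passing to affine opens and writing the map as $R[t,y_1,\dots,y_{n-1}]\to R'[t,y_1,\dots,y_{n-1}]$, which uses exactly the cancellation $A\otimes_K K[\underline{y}]\simeq A\otimes_k k[\underline{y}]$ for a $K$-algebra $A$). Your version simply makes the global identification $\Spec(R')\times_K T_K\simeq\Spec(R')\times_k T$ explicit before invoking the affine-local check, which is a slight reorganization rather than a different argument.
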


\begin{proof}
Since the statement of the lemma is local on $X \times_k \A^1_k \times_k \ov{\square}^{n-1}_k$, we can replace $\ov{\square}^{n-1}_k \simeq \P^{n-1}_k$ by $\A^{n-1}_k$. The problem then reduces to showing that the map $R[t, y_1, \ldots , y_{n-1}] \to R'[t, y_1, \ldots , y_{n-1}]$ is faithfully flat. This is obvious because $R \to R'$ is faithfully flat. 
\end{proof}

The next result says that the additive higher Chow functor on the category of affine $k$-schemes with flat morphisms is a continuous contravariant functor.

\begin{lem}\label{lem:limit-0}
Let $k \inj K$ be a field extension, not necessarily finitely generated. Let $R$ be a 
$K$-algebra essentially of finite type. Let $\{R_i\}_{i \in I}$ be a direct system of $k$-algebras essentially of finite type, such that the transition map $\lambda_{ij} \colon R_i \to R_{j}$ is faithfully flat, and $\varinjlim_i R_i = R$. Then the flat pull-backs of (additive) higher Chow groups induce isomorphisms $\varinjlim_i  \ \CH^q(R_i,n) \xrightarrow{\simeq} \CH^q(R,n)$ and $\varinjlim_i  \  \TH^q(R_i, n;m) \xrightarrow{\simeq} \TH^q(R, n;m)$ for all $m \ge 0, n, q \ge 1$.
\end{lem}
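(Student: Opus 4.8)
The plan is to reduce the statement to a standard continuity (or ``spreading out'') property of the additive cycle complex. First I would fix the level $n$ and the modulus $m$ and work with the complexes $\TZ^q(A_i,\bullet;m)$ and $\TZ^q(A,\bullet;m)$ directly. The key observation is that a cycle in $\un{\TZ}^q(A,n;m)$ involves only finitely many equations and coefficients, all of which live in $A = \varinjlim_i A_i$; since a filtered colimit of rings absorbs any finite amount of data into some stage $A_{i_0}$, every admissible cycle $\alpha$ over $A$ descends to a cycle $\alpha_{i_0}$ over some $A_{i_0}$. The two conditions defining admissibility — proper intersection with all faces $A_{i_0}\times F$, and the modulus $m$ condition on the normalization of the Zariski closure — are again expressed by finitely many (in)equalities of divisors, so after possibly enlarging $i_0$ we may assume $\alpha_{i_0}$ is genuinely admissible; here I would invoke faithful flatness of $\lambda_i$ together with Lemma~\ref{lem:Elem-comm} and Lemma~\ref{lem:surjm} to transfer the divisorial inequality in the modulus condition between the normalizations upstairs and downstairs. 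This gives surjectivity of $\varinjlim_i \un{\TZ}^q(A_i,n;m)\to\un{\TZ}^q(A,n;m)$, and the analogous argument applied to a cycle that pulls back to $0$ (respectively to a boundary) over $A$ gives injectivity; since filtered colimits are exact and commute with taking the nondegenerate quotient and with homology, one obtains $\varinjlim_i \TH^q(A_i,n;m)\xrightarrow{\simeq}\TH^q(A,n;m)$.

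In more detail, for surjectivity of the map on homology: given a cycle $\alpha\in\TZ^q(A,n;m)$ with $\partial(\alpha)=0$, descend both $\alpha$ and the finitely many face cycles $\partial_i^\epsilon(\alpha)$ to a common stage $A_{i_0}$; flatness ensures that the pullback of faces commutes with the base change $A_{i_0}\to A$, so the relation $\partial(\alpha_{i_0})=0$ can also be arranged to hold after enlarging $i_0$, because its failure would be detected by finitely many nonzero coefficients which must already vanish at some stage. For injectivity: if $\alpha_{i_0}$ pulls back over $A$ to a boundary $\partial(\beta)$, descend $\beta$ to some $A_{i_1}\supseteq A_{i_0}$, and then the identity $\alpha_{i_1}=\partial(\beta_{i_1})$ holds over $A_{i_1}$ after one more enlargement for the same finiteness reason. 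Throughout, the subtlety that the base change maps $\Spec A\to\Spec A_i$ are pro-open immersions in the essentially-of-finite-type setting, rather than honest open immersions, is handled by Lemma~\ref{lem:Elem-comm}, which guarantees the relevant morphisms of ambient spaces $X'\times_K\A^1_K\times_K\ov{\square}^{n-1}_K\to X\times_k\A^1_k\times_k\ov{\square}^{n-1}_k$ are faithfully flat, so that cycle-theoretic pullback is defined and exact on the level of the free abelian groups of cycles meeting faces properly.

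The step I expect to be the main obstacle is the descent of the \emph{modulus condition}. The modulus condition is imposed on the normalization $\ov{V}^N$ of the closure $\ov{V}$ of a component $V$ in $X\times\ov{B}_n$, and normalization does not obviously commute with the base change $A_{i_0}\to A$ when this base change is only faithfully flat and the rings are not excellent in an a priori controlled way. The way around this is to argue that the inequality $(m+1)[\nu^*(F_{n,0})]\le[\nu^*(F_n^1)]$ of Weil divisors, pulled back along the faithfully flat map of normalizations, descends by Lemma~\ref{lem:surjm}: if the inequality holds after the faithfully flat pullback it holds before. One must also check that the closure of the descended cycle base-changes correctly to the closure of $\alpha$, which follows because closure commutes with flat base change, and that normalization is compatible up to the finite level after enlarging $i_0$ so that $A_{i_0}$ already ``sees'' the normalization of the relevant component — this is exactly the content of the cited Lemmas~4.13--4.14 of \cite{KP3} in the form of Lemma~\ref{lem:Adm-spread}, which I would use as a black box to move admissible cycles (and closed boundary conditions) between $V=\Spec A$ and its atlases, and hence between the stages $A_i$.
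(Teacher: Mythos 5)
Your proof follows the paper's route exactly: reduce to the level of cycle complexes by exactness of filtered colimits, descend an admissible cycle from $A$ to some $A_{i_0}$ using spreading out together with faithful flatness of the product maps (Lemma~\ref{lem:Elem-comm}), and transfer the modulus inequality down along the induced map of normalizations via Lemma~\ref{lem:surjm}. Two small corrections: the map of normalizations $\ov{Z}^N \to \ov{Z}_{i_0}^N$ is only known to be \emph{surjective}, not faithfully flat — but surjectivity is all Lemma~\ref{lem:surjm} requires, and it follows from surjectivity of $\ov{Z}\to\ov{Z}_{i_0}$ together with the universal property of normalization — and you do not need Lemma~\ref{lem:Adm-spread} here, since that same universal property already produces the comparison map between normalizations without any hypothesis on how normalization interacts with base change.
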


\begin{proof}
The proofs for the higher Chow groups and additive higher Chow groups are identical. So, we prove the latter case only. Since the homology functor commutes with the direct limit, it suffices to show that the lemma holds on the level of cycle complexes. We set $V_i = \Spec(R_i)$ and $V = \Spec(R)$. Let $\lambda'_i: R_i \to R$ be the natural map. This is also faithfully flat; this follows from the fact that a direct limit of flat modules is flat, and an $R_i$-module $M$ is faithfully flat if and only if it is flat and $\fm M \neq 0$ for every nonzero maximal ideal $\fm \subset R_i$ (see \cite[Theorem 7.2, p.47]{Matsumura}).

The projections $\Spec(K) \to \Spec(k)$ and $V \to V_j \to V_i$ between $k$-schemes induce the projection maps $V \times_K \A^1_K \times_K \ov{\square}^{n-1}_K \to V_i \times_k \A^1_k \times_k \ov{\square}^{n-1}_k$. This morphism is faithfully flat. It induces the pull-back maps between the cycle groups $\TZ^q(V_i, n;m) \xrightarrow{\lambda^*_{ij}}  \TZ^q(V_j, n;m) \xrightarrow{\lambda '^*_{j}} \TZ^q(V, n;m)$. 

The injectivity of the map ${\varinjlim_i}\ \TZ^q(V_i, \bullet ;m) \to \TZ^q(V, \bullet;m)$ is obvious. To show its surjectivity, let $Z \in \TZ^q(V, n;m)$ be an irreducible admissible cycle and let $\ov{Z} \subset V \times_K \A^1_K \times_K \ov{\square}^{n-1}_K$ be its Zariski closure and $\nu_Z: \ov{Z}^N \to V \times_K \A^1_K \times \ov{\square}^{n-1}_K$ the normalization map. Since each of $f_i := \Spec(\lambda_i) : V_{i+1} \to V_i$ and $f'_i := \Spec(\lambda'_i) : V \to V_i$  is faithfully flat, by \lemref{lem:Elem-comm}, the product maps $\widetilde{f_i}: V_{i+1} \times_k \A^1_k \times_k \ov{\square}^{n-1}_k \to V_i \times_k \A^1_k \times_k \ov{\square}^{n-1}_k$ and $\widetilde{f'_i}: V \times_K \A^1_K \times_K \ov{\square}^{n-1}_K \to V_i \times_k \A^1_k \times_k \ov{\square}^{n-1}_k$ are faithfully flat maps of Noetherian $k$-schemes.

Since $V = \varprojlim_i  V_i$, it follows from the above faithfully flatness that there exists $i \gg 0$ and an irreducible cycle $\ov{Z}_i \inj V_i \times_k \A^1_k \times_k \ov{\square}^{n-1}_k$ such that $(\widetilde{f'_i})^*(\ov{Z}_i) = \ov{Z}$ and $Z_i := \ov{Z}_i \cap (V_i \times_k \A^1_k \times_k {\square}^{n-1}_k) \in z^q(V_i \times_k \A^1_k, n-1)$. 

Since the right square in the commutative diagram below
\begin{equation}\label{eqn:limit-0-01}
\xymatrix@C2pc{
\ov{Z}^N \ar[r] \ar[d] \ar@/^1.5pc/[rr]_{\nu_Z} & \ov{Z} \ar[r] \ar[d] & V \times_K \A^1_K \times_K \ov{\square}^{n-1}_K \ar[d]^{\widetilde{f'_i}} & V \times_K \A^1_K \times_K {\square}^{n-1}_K \ar@{_{(}->}[l] \ar[d] \\
\ov{Z}_i^N \ar[r] \ar@/_ 1.5pc /[rr]^{\nu_{Z_i}} & \ov{Z}_i \ar[r] & V_i \times_k \A^1_k \times_k \ov{\square}^{n-1}_k & V_i \times_k \A^1_k \times_k {\square}^{n-1}_k \ar@{_{(}->}[l]}
\end{equation}
is Cartesian, we must have $Z = (\widetilde{f'_i})^*({Z_i})$.

Since $\widetilde{f'_i}$ is faithfully flat, it follows that $\ov{Z} = (\widetilde{f'_i})^*(\ov{Z}_i) \to \ov{Z}_i$ is also faithfully flat. In particular, it is surjective. We conclude that the induced map $\ov{Z}^N \to \ov{Z}_i^N$ on the normalizations is also surjective. Since $\ov{Z}$ satisfies the modulus condition, we can now apply \lemref{lem:surjm} to conclude that $Z_i$ too satisfies the  modulus condition and hence it is in $\TZ^q(V_i, n;m)$. 
This finishes the proof.
\end{proof}

We can now prove the following improvement of \lemref{lem:WC-structure-perf}:

\begin{thm}\label{thm:WC-structure}
Let $k$ be any field and let $R$ be a regular semi-local $k$-algebra essentially of finite type. Then $(\TH^\bullet(R; m))_{m \in \N}$ is a restricted Witt-complex over $R$.

If $f \colon R \to R'$ is a morphism of regular semi-local $k$-algebras essentially of finite type, then there exists a pull-back 
map $f^* \colon (\TH^\bullet(R; m))_{m \in \N} \to (\TH^\bullet(R'; m))_{m \in \N}$, which is a morphism of restricted Witt-complexes over $R$.
\end{thm}

\begin{proof}
If $k$ is finite, the theorem follows from Lemmas~\ref{lem:WC-structure-perf} and ~\ref{lem:CW-Pull-back-RWC}. Thus we may now assume that $k$ is infinite and $p > 1$.

Let $\{R_i\}_{i \in I}$ be the direct system of subrings in $R$ as in \lemref{lem:Popescu-eft}. Since each transition map $\lambda_{ij} \colon R_i \to R_{j}$ is flat, we have the pull-back maps of projective systems $\lambda^*_{ij} \colon (\TH^\bullet(R_i; m))_{m \in \N} \to (\TH^\bullet(R_{j}; m))_{m \in \N}$. 

Since each $R_i$ is a smooth $\F_p$-algebra essentially of finite type, it follows from \lemref{lem:WC-structure-perf} that $(\TH^\bullet(R_i; m))_{m \in \N}$ is a restricted Witt-complex over $R_i$. Furthermore, it follows from \lemref{lem:CW-Pull-back-RWC} that each $\lambda^*_{ij}$ is a morphism of restricted Witt-complexes over $R_i$. Thus we conclude from \lemref{lem:Witt-complex-limit} that $(\varinjlim_i \TH^\bullet(R_i; m))_{m \in \N}$ forms a restricted Witt-complex over $R$. The first part of the theorem now follows \lemref{lem:limit-0}. The second part follows directly by combining Lemmas~\ref{lem:Popescu-eft-natural}, ~\ref{lem:CW-Pull-back-RWC} and ~\ref{lem:limit-0}. 
\end{proof}

Since the higher Chow groups of smooth schemes over a field have a ring structure, an identical proof also shows the following:

\begin{thm}\label{thm:HCG-product}
Let $k$ be any field and let $R$ be a regular semi-local $k$-algebra essentially of finite type. Then $(\CH^\bullet(R, n))_{n \ge 0}$ is a graded commutative ring. 

If $f \colon R \to R'$ is a morphism of essentially of finite type regular semi-local $k$-algebras, then there exists a pull-back ring homomorphism $f^* \colon (\CH^\bullet(R, n))_{n \ge 0} \to (\CH^\bullet(R', n))_{n \ge 0}$.
\end{thm}

Using Theorems~\ref{thm:WC-structure} and ~\ref{thm:HCG-product}, we get a direct extension of \propref{prop:cap product} to regular algebras over arbitrary fields.

\begin{cor}\label{cor:cap product*}
Let $k$ be any field and let $R$ be a regular $k$-algebra  essentially of finite type. Then there is a natural cap product map
$$\cap_R:  \TH^q (R, n; m) \otimes_{\mathbb{Z}} \CH^p (R, n') \to 
\TH^{p+q} (R, n+n'; m).$$ 
This commutes with the pull-back and push-forward whenever they exist. This is given by $a \cap_R b = \Delta_R ^* (a \times b)$, where $\Delta_R^*$ is the pull-back via the diagonal map $\Delta_R \colon \Spec (R) \to \Spec (R) \times \Spec(R)$.
\end{cor}

\begin{prop}\label{prop:Trace-PF}
Let $f \colon R \to R'$ be a finite and injective morphism of regular semi-local $k$-algebras essentially of finite type. Let $m \ge 0$ and $n, r \ge 1$ be integers. Then the push-forward maps $f_* \colon \TH^n (R', n;m ) \to \TH^n (R, n;m)$ satisfy
\begin{eqnarray*}
& & \mathfrak{R}f_* = f_* \mathfrak{R}; \ \delta f_* = f_* \delta; \ F_r f_* = f_* F_r; \ V_r f_* = f_* V_r.
\end{eqnarray*}
Furthermore, one has $f_*(y f^*(x)) = f_*(y)x$ for $x \in \TH^n (R, n;m)$ and $y \in \TH^n (R', n;m)$. 
\end{prop}

\begin{proof}
Note that $f$ must be flat in our case (see \cite[Exercise III-10.9, p.276]{Hartshorne} or \cite[Proposition (6.1.5), p.136]{EGA4-2}). The first part is \cite[Proposition~7.3]{KP3} and the second part is \cite[Theorem~3.10]{KP2} (whose proof does not require projectivity of the underlying schemes if the morphisms between them is finite and flat). 
\end{proof}

\section{The de Rham-Witt-Chow homomorphism}\label{sec:The-map}
In this section, we construct one of the main objects of our study: the de Rham-Witt-Chow homomorphism. This is a map from the de Rham-Wit forms of a regular semi-local $k$-algebra (essentially) of finite type to its additive higher Chow groups. The main goal of this paper is to show that this map is an isomorphism. This section also includes the key lemma to prove the surjectivity of this map. We fix an arbitrary field $k$. 

\subsection{The homomorphism $\tau^R_{n,m}$}
\label{sec:map-defn}
Let $R$ be a regular semi-local $k$-algebra essentially of finite type. Since $\{\W_m\Omega^{\bullet}_R\}_{m \in \N}$ is the initial object in the category of restricted Witt-complexes over $R$, it follows from \thmref{thm:WC-structure} that there is a unique homomorphism of groups
\begin{equation}\label{eqn:dRWC}
\tau^R_{n,m} \colon \W_m\Omega^{n-1}_R \to \TH^n (R, n;m).
\end{equation}
The homomorphisms $\{\tau^R_{n,m}\}_{m, n \ge 1}$ form a morphism of restricted Witt-complexes over $R$ such that $\tau^R_{1,m}$ is given by ~\eqref{eqn:Structure-map}. We shall often denote the collection $\{\tau^R_{n,m}\}_{m, n \ge 1}$ by $\tau^R_\bullet$.

An easy consequence of \thmref{thm:WC-structure} is the following functoriality of $\tau^R_{n,m}$.

\begin{prop}\label{prop:CW-Pull-back}
Let $f: R \to R'$ be a morphism between regular semi-local $k$-algebras essentially of finite type. Then the diagram
\begin{equation}\label{eqn:CWPB-0}
\xymatrix@C1.8pc{
\W_m \Omega^{n-1}_R \ar[rr]^-{\tau^R_{n,m}} \ar[d]_{f^*} & & \TH^n (R, n;m) \ar[d]^-{f^*} \\
\W_m \Omega^{n-1}_{R'} \ar[rr]^-{\tau^{R'}_{n,m}} & & \TH^n (R', n;m)}
\end{equation}
commutes for all integers $m,n \ge 1$.
\end{prop}

\begin{proof}
The left vertical arrow is a morphism of restricted Witt-complexes over $R$ by \thmref{thm:WC-structure}. The maps $\tau^R_\bullet$ and $\tau^{R'}_\bullet$ are morphisms of restricted Witt-complexes over $R$. It follows that $f^* \circ \tau^R_\bullet$ and $\tau^{R'}_\bullet \circ f^*$ are both morphisms of restricted Witt-complexes over $R$. Using ~\eqref{eqn:Structure-map}, the diagram ~\eqref{eqn:CWPB-0} is easily seen to commute when $n = 1$. Since $\W_m\Omega^{\bullet}_R$ is the universal restricted Witt-complex over $R$, we must have $f^* \circ \tau^R_\bullet = \tau^{R'}_\bullet \circ f^*$ on $\W_m\Omega^\bullet_R$.
\end{proof}

When $R$ is a smooth $k$-algebra, then \lemref{lem:WC-structure-perf} says that $(\TH^\bullet(R; m))_{m \in \N}$ is a restricted Witt-complex over $R$ even if
it not necessarily semi-local. We thus get:

\begin{thm}\label{thm:Perf**}
Let $k$ be any field and let $R$ be a smooth $k$-algebra essentially of finite type. Then there is a unique homomorphism
of groups
\begin{equation}\label{eqn:Perf**-0}
\tau^R_{n,m} \colon \W_m\Omega^{n-1}_R \to \TH^n (R, n;m)
\end{equation}
such that $\tau_{n,m} ^{R}$ is functorial in $R$. 
The homomorphisms $\{\tau^R_{n,m}\}_{m, n \ge 1}$ form a morphism of restricted Witt-complexes over $R$ such that $\tau^R_{1,m}$ is given by ~\eqref{eqn:Structure-map}.
\end{thm}

The following is also a direct consequence of the Witt-complex structure on the additive higher Chow groups of $R$.

\begin{cor}
Since the maps in \eqref{eqn:dRWC} give a morphism of restricted Witt-complexes over $R$, we deduce the following identities:
\begin{equation}\label{eqn:propertiesDRWC}
 \tau_{n,m} ^R d = \delta \tau_{n-1, m} ^R;  \ \tau_{n, rm+r-1} ^R V_r = V_r \tau_{n,m} ^R; \ \tau_{n,m} ^R F_r = F_r \tau_{n, rm+r-1} ^R.
\end{equation}
The second identity of \eqref{eqn:propertiesDRWC} implies the following variation, up to applying $\mathfrak{R}$:
\begin{equation}\label{eqn:propertiesDRWC2}
\tau_{n, m} ^R V_r = V_r \tau_{n, \lfloor{m/r}\rfloor} ^R.
\end{equation} 
\end{cor}

As an easy consequence of \propref{prop:inject RK}, we now have the
following easier part of Theorem \ref{thm:Main-1}:

\begin{lem}\label{lem:Injection}
Let $R$ be a regular semi-local $k$-algebra essentially of finite type. Then $\tau^R_{n,m}$ is a monomorphism
for all $m,n \ge 1$.
\end{lem}

\begin{proof}
We take $R'$ to be the total ring of quotients of $R$ and apply Propositions~\ref{prop:inject RK} and ~\ref{prop:CW-Pull-back}. Since $R'$ is a product of fields,  by the commutative diagram \eqref{eqn:CWPB-0} we are reduced to proving the
lemma when $R$ is a field. But this case follows from \cite[Theorem~1]{R} (when $p \ne 2$) and Theorem~\ref{thm:WC**} of the \S \ref{sec:Appendix} (when $p = 2$).
\end{proof}

\subsection{Traceability}\label{sec:Trace}
To prove that the de Rham-Witt-Chow homomorphism is surjective, we want to have the push-forward map on the additive higher Chow groups and the compatible trace map on the de Rham-Witt complexes, associated to a finite and dominant morphism of regular semi-local $k$-schemes essentially of finite type. The existence of the push-forward map on the additive higher Chow groups is known. Using his duality theory, Ekedahl \cite{Ekedahl} showed the existence of a theory of trace for the $p$-typical de Rham-Witt complex $W_m\Omega^*_R$. Using the $p$-typical decomposition, one can then construct a theory of trace maps for the big de Rham-Witt complex.

Using the sfs-moving lemma (see \thmref{thm:sfs-TCH}) and Ekedahl's trace, one may construct a map $\kappa^R_{n,m} \colon \TH^n(R,n;m) \to \W_m\Omega^{n-1}_R$. In order to either show directly that an sfs-cycle lies in the image of $\tau^R_{n,m}$ or to show that $\tau^R_{n,m} \circ \kappa^R_{n,m}$ is identity, we shall need to verify that $\tau^R_{n,m}$ commutes with Ekedahl's trace on $\W_m\Omega^{n-1}_R$ and the push-forward map on $\TH^n(R,n;m)$. Since this is difficult to check, we use an indirect device to complete our program.

We define the notion of ``traceability" of de Rham-Witt forms using the de Rham-Witt-Chow maps and the push-forwards on the cycle groups. Towards the surjectivity of the de Rham-Witt-Chow homomorphism, we show that all de Rham-Witt forms of a regular semi-local $k$-algebra essentially of finite type are traceable. 

We will start with the known trace map when $n =1$. We will then use a double induction on $m,n \ge 1$ and various properties of Witt-complexes to show the traceability for all $m,n \ge 1$. This is the main goal of \S \ref{sec:Trace}. 
Thanks to the sfs-moving lemma (Theorem~\ref{thm:sfs-TCH}), we only need to do this for simple finite extensions of rings.

We now define traceability:

\begin{defn}\label{defn:traceable}
Let $f \colon R \to S$ be a finite and injective morphism between regular semi-local $k$-algebras essentially of finite type. Let $m,n \ge 1$ be two integers. Given this, we obtain a diagram:
\begin{equation}\label{eqn:Trace-0}
\xymatrix@C1.8pc{
\W_m \Omega^{n-1}_S \ar[r]^-{\tau^S_{n,m}} & \TH^n (S, n;m) \ar[d]^-{f_*} \\
\W_m \Omega^{n-1}_R \ar[r]^-{\tau^R_{n,m}} & \TH^n (R, n;m).}
\end{equation}
We say that a de Rham-Witt form $\omega \in \W_m \Omega^{n-1}_S$ is \emph{traceable to $R$} (via cycles) if  
$f_* \circ \tau^S_{n,m}(\omega) \in {\rm Im}(\tau^R_{n,m})$.
\end{defn}

\begin{defn}\label{defn:simple ext}
A ring extension $R \inj S$ is said to be a \emph{simple extension} if it is flat and there exists a monic polynomial $p(t) \in R[t]$ such that $S \simeq R[t] / (p(t))$. 
\end{defn}

Suppose that $R$ is semi-local and let $e= \deg (p(t))$. Let $a:= t \mod (p(t))$ in $S$. Then $S$ is a finite free $R$-module with an $R$-basis $\{1, a, a^2, \ldots , a^{e-1}\}$. We need the following basic fact about the ring of Witt vectors.

\begin{lem}\label{lem:Witt finite}Let $S$ be an $R$-algebra which is
free as an $R$-module with basis $\{x_1, \ldots, x_e\}$. Let $T$ be a finite truncation set. Then every $\omega \in \mathbb{W}_T (S)$ is uniquely written as 
$$\omega = \sum_{n \in T}  \sum_{i=1} ^eV_n ( [ c_{n,i} ]_{T/n} \cdot [x_i ]_{T/n} ),$$
 for some $c_{n,i} \in R$, where $T/n$ is the truncation set $\{ a \in \mathbb{N} \ | \ an \in T\}$,  $[ - ]_{T/n}$ denotes the Teichm\"uller lift in $\mathbb{W}_{T/n} (R)$, and $V_n$ is the $n$-th Verschiebung operator. 
\end{lem}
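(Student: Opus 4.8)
The statement is essentially a structural fact about the ring of big Witt vectors, and the plan is to derive it from the presentation \eqref{eqn:Witt present} together with the behaviour of Teichm\"uller lifts and Verschiebung operators with respect to a free module basis. First I would recall that for any finite truncation set $T$, the additive group of $\mathbb{W}_T(S)$ decomposes as $\mathbb{W}_T(S) = \bigoplus_{n \in T} V_n\big([S]_{T/n}\big)$ in the sense that every element is uniquely a finite sum $\sum_{n \in T} V_n([s_n]_{T/n})$ with $s_n \in S$; this is the general-truncation-set version of \eqref{eqn:Witt present}, which appears in \cite[Properties A.4]{R} and follows from the identification $\mathbb{W}_S(R) \simeq (1+TR[[T]])^\times/I_S$ recalled in \S\ref{subsection:Witt ring}. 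The content of the lemma is then to expand each ``Teichm\"uller coordinate'' $s_n \in S$ in the $R$-basis and to rewrite the resulting expression in terms of Teichm\"uller lifts of elements of $R$ and of the basis elements $x_i$.

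The key step is the identity $[rs]_{T} = [r]_T \cdot [s]_T$ for $r \in R$, $s \in S$ (multiplicativity of the Teichm\"uller lift, cf. \S\ref{subsection:Witt ring}), applied with $s = x_i$: writing $s_n = \sum_{i=1}^e c_{n,i} x_i$ with $c_{n,i} \in R$, one wants $[s_n]_{T/n}$ to relate to $\sum_i [c_{n,i}]_{T/n} \cdot [x_i]_{T/n}$. However, the Teichm\"uller lift is \emph{not} additive, so $[s_n]_{T/n} \neq \sum_i [c_{n,i} x_i]_{T/n}$ in general; this mismatch is exactly the main obstacle. The way around it is to not insist on matching term-by-term but instead to argue by a filtration/induction on the truncation set: the associated graded of $\mathbb{W}_T(S)$ for the filtration by the ``length'' of Witt vectors is the direct sum of copies of $S$ indexed by $T$, and on the graded pieces the $R$-module structure is visible, so $\{V_n([x_i]_{T/n})\}_{n \in T, 1 \le i \le e}$ is an $R$-module generating set whose cardinality $e \cdot |T|$ matches $\mathrm{length}_R \mathbb{W}_T(S) = e \cdot |T| = |T| \cdot \mathrm{rank}_R S$. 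One then checks $R$-linear independence on the associated graded, where it reduces to the freeness of $S$ over $R$, and concludes that $\{V_n([x_i]_{T/n})\}$ is in fact a free $R$-basis (via the $R$-module structure coming from $R \to \mathbb{W}_T(R) \to \mathbb{W}_T(S)$ and the projection formula $V_n(F_n(x)y) = xV_n(y)$, item (iii) of \S\ref{sec:DRW}, which gives $[c]_{T/n}\cdot V_n(y) = V_n(F_n([c]_T)y)$ and lets one move scalars across $V_n$).

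Concretely, the cleanest route is: (1) establish that the map $\bigoplus_{n \in T} S \to \mathbb{W}_T(S)$, $(s_n) \mapsto \sum_n V_n([s_n]_{T/n})$, is a bijection of sets (general $T$-version of \eqref{eqn:Witt present}); (2) observe both sides are finitely generated $R$-modules of the same length $e|T|$, so it suffices to produce an $R$-spanning set of size $e|T|$; (3) using the projection formula to write $V_n([c_{n,i}]_{T/n}[x_i]_{T/n})$ in the form $c_{n,i} \cdot V_n([x_i]_{T/n})$ up to correction terms supported in \emph{strictly deeper} Verschiebung levels, run a downward induction on $n$ through the divisors in $T$ to solve for the $c_{n,i}$ given $\omega$, proving existence; (4) a dimension (length) count then forces uniqueness. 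The only genuinely delicate bookkeeping is tracking the ``deeper level'' correction terms in step (3); since $p(t)$ monic of degree $e$ makes $S$ free with explicit basis $\{1,a,\dots,a^{e-1}\}$, these corrections are polynomial expressions in the $c_{n,i}$ and cause no trouble once the induction is organized by the partial order ``$n \mid n'$'' on $T$. I expect step (3) — the careful use of the projection formula to commute $R$-scalars past $V_n$ and isolate the leading term — to be the main obstacle, but it is a standard manipulation in the theory of Witt vectors and de Rham--Witt complexes.
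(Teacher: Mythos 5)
The paper's proof is a short, direct ``coordinate-peeling'' argument: given $\omega = (\omega_s)_{s \in T} \in \W_T(S)$, let $s_0 = \min\{s \in T : \omega_s \neq 0\}$ (minimum in the usual integer order), expand $\omega_{s_0} = \sum_i c_{s_0,i}\,x_i$ uniquely using freeness of $S$ over $R$, and set $\varphi(\omega) = \omega - V_{s_0}\bigl(\sum_i [c_{s_0,i}]_{T/s_0}\cdot[x_i]_{T/s_0}\bigr)$. Because the $V_{s_0}$-term is supported at multiples of $s_0$, and because all Witt coordinates of both vectors at indices $< s_0$ vanish (so that the Witt-ring subtraction at index $s_0$ is literal subtraction of coordinates), one checks $\varphi(\omega)$ vanishes at all indices $\le s_0$. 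Iterating, since $|T| < \infty$ the process terminates, yielding existence; uniqueness follows from the same ``smallest nonzero coordinate'' observation applied to a vanishing combination.

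Your proposal correctly identifies the main obstacle (Teichm\"uller lift is multiplicative but not additive, so $[s_n]$ cannot simply be expanded in the basis), and your idea of filtering by Verschiebung depth is in the same spirit as the paper. However, the mechanism you propose to finish the argument has genuine gaps. First, the ``length over $R$'' / dimension count does not make sense: there is no natural $R$-module structure on $\W_T(S)$ because $R \to \W_T(R)$ via the Teichm\"uller lift is not additive, hence not a ring map, and no such ring map exists in general (e.g.\ the $p$-typical length-two Witt ring of $\F_p$ is $\Z/p^2$). The $R$-action is only visible on the associated graded pieces $\simeq S$, which does not bootstrap to an $R$-module structure on $\W_T(S)$ itself; and even if it did, over a positive-dimensional regular semi-local $R$ these would be infinite-length modules, so ``length $e|T|$'' is not meaningful. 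Second, the projection formula manipulation $[c]_T\cdot V_n(y) = V_n(F_n([c]_T)y) = V_n([c^n]_{T/n}\,y)$ introduces $n$-th powers $[c^n]_{T/n}$, not $[c]_{T/n}$, so it does not produce terms of the shape $V_n([c_{n,i}]_{T/n}[x_i]_{T/n})$ required by the lemma.

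The cleaner way to organize the argument you were circling is to drop the module-theoretic counting entirely: at each stage, the $s_0$-th Witt coordinate of $V_{s_0}\bigl(\sum_i [c_{s_0,i}][x_i]\bigr) = V_{s_0}\bigl(\sum_i [c_{s_0,i}x_i]\bigr)$ equals $\sum_i c_{s_0,i}x_i$ exactly (since the first Witt coordinate of a sum of Teichm\"uller lifts is the sum of their underlying elements), so the subtraction kills exactly the $s_0$-coordinate without introducing anything at lower indices; surjectivity and injectivity are then immediate from this greedy peeling, with no need to commute scalars across $V_n$ or to count lengths. Note also that the coefficients $c_{n,i}$ produced this way are \emph{not} the basis coordinates of the $s_n$ in the decomposition $\omega = \sum V_n([s_n])$ of step~(1) of your plan --- after the first subtraction the higher coordinates of the remainder shift, so the two recursions diverge; your plan implicitly conflates them, which is another spot where the argument as written would not go through.
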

\begin{proof}Its proof is similar to that of \cite[Lemma 2.20]{R}, for instance. Let $\omega = (\omega_n)_{n \in T} \in \mathbb{W}_T (S)$. Suppose $\omega \not = 0$, for otherwise there is nothing to prove. Define an operator $\varphi$ as follows: first choose $s_0 = \min\{ s \in T | \ \omega_s \not = 0 \}$. This minimum exists because $\omega \not = 0$. Here, $\omega_{s_0} \in S$ so that there exists a unique expression $\omega _{s_0} = \sum_{i=1} ^e c_{s_0, i} \cdot  x_i$ in $S$ for some $c_{s_0, i} \in R$. Define $\varphi (\omega):= \omega - V_{s_0} (\sum_{i=1} ^e [ c_{s_0, i}]_{T/ s_0} \cdot [ x_i ] _{T/ s_0}).$ We have either $\varphi (\omega) = 0$, or $\varphi (\omega) \not = 0$. In the former case, the argument stops, while in the latter case, there exists $s_1: = \min \{ s \in T | \ \varphi (\omega)_s \not = 0 \}$. By construction, we have $s_1 > s_0$. 
We repeat this procedure. Since $|T|< \infty$, there exists $N \geq 1$ such 
that eventually $\varphi^N (\omega) = 0$.
\end{proof} 

When $S$ is a simple extension of $R$, and $T=\{1, \ldots, m\}$, we immediately deduce that $\omega =  \sum_{i=1} ^m \sum_{j=0} ^{e-1} V_i ([c_{i, j}]_{\lfloor{m/i}\rfloor} \cdot [a]^j_{\lfloor{m/i}\rfloor} )$.

\bigskip

Recall from \cite[Proposition~A.9]{R} that for a finite free extension of rings $R \hookrightarrow S$, and $m \ge 1$, there is a trace map ${\rm Tr}_{S/R} \colon \W_m(S) \to \W_m(R)$ which commutes with the Frobenius and the Verschiebung operators, and satisfies other usual properties of the trace maps. This $\Tr_{S/R}$ is given as follows: for the finite free extension $R[[t]] \to S[[t]]$, we have the norm map ${\rm N}_{S/R} \colon (S[[t]])^{\times} \to (R[[t]])^{\times}$ given by the determinant of the left multiplication maps. This induces a map ${\rm N}_{S/R} \colon (1 + t S[[t]])^{\times} / (1 + t^{m+1} S[[t]])^{\times} \to (1 + tR[[t]])^{\times} / (1 + t^{m+1} R[[t]])^{\times}$. This ${\rm N}_{S/R}$ is the definition of $\Tr_{S/R}$ via the identification \eqref{eqn:WF-1}. 

\begin{lem}\label{lem:tr-p-f-Witt 2}
Let $R \inj S$ be a simple extension of regular semi-local $k$-algebras essentially of finite type and let $m \ge 1$ be an integer. Then the diagram 
\begin{equation}\label{eqn:tr-p-f}
\xymatrix{ 
\mathbb{W}_m(S) \ar[r] ^{\tau^{S}_{1,m} \ \ \ \ } \ar[d]_{\Tr_{S/R}} & \TH^1 (S, 1;m) \ar[d] ^{f_*} \\
\mathbb{W}_m(R) \ar[r]_{\tau^R _{1,m}\ \ \ } & \TH^1 (R, 1;m)}
\end{equation}
commutes, where $f: \Spec (S) \to \Spec (R)$ is the induced map.
\end{lem}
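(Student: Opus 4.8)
The key point is that both $\tau^S_{1,m}$ and $\tau^R_{1,m}$ are explicit on the level of cycles: by the construction recalled in \S\ref{sec:DRWC}, for $b \in S^\times$ the element $\tau^S_{1,m}([b])$ is the class of the graph cycle $\Gamma_{(1-bt)}$ associated to the ideal $(1-bt) \subset S[t]$, and similarly over $R$. Using the presentation of $\W_m(S)$ as $(1+TS[[T]])^\times/(1+T^{m+1}S[[T]])^\times$ via $\gamma$ in \eqref{eqn:WF-1}, every element of $\W_m(S)$ is a product of Teichm\"uller-type factors $(1-b_iT^i)$, and since $\tau^S_{1,m}$ is additive (it is a morphism of Witt-complexes, hence of the underlying additive groups, where addition on the Witt side is multiplication of power series) it suffices to check the commutativity of \eqref{eqn:tr-p-f} on elements of the form $\omega = V_i([b]_{\lfloor m/i\rfloor})$, i.e. $\gamma(\omega) = 1-bT^i$. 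Moreover, since both $\tau$'s and both $\Tr_{S/R}$ and $f_*$ commute with the Verschiebung operators $V_i$ (the $\tau$'s by \eqref{eqn:propertiesDRWC}, $\Tr_{S/R}$ by \cite[Proposition~A.9]{R}, and $f_*$ by Proposition~\ref{prop:Trace-PF}), we may further reduce to the case $i=1$, that is, to checking $f_*(\tau^S_{1,m}([b])) = \tau^R_{1,m}(\Tr_{S/R}([b]))$ for $b \in S$ (extending to $b$ not a unit by the usual limiting/continuity argument on the additive structure, or by noting that $\tau^S_{1,m}$ is defined on all of $\W_m(S)$ and $[b]_m = 1-bT$ makes sense for any $b$).

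So the heart of the matter is the identity $f_*([\Gamma_{(1-bt)}]) = [\Gamma_{(1-ct)}]$ in $\TH^1(R,1;m)$, where $c \in \W_m(R)$ is determined by $\Tr_{S/R}([b])$, i.e. $1-ct = \mathrm{N}_{S/R}(1-bt) \bmod t^{m+1}$ with $\mathrm{N}_{S/R}$ the determinant of multiplication by $1-bt$ on the free $R[[t]]$-module $S[[t]]$. Here I would argue geometrically: the cycle $\Gamma_{(1-bt)} \subset \Spec(S)\times\A^1$ is the graph of the section $\Spec(S) \to \Spec(S)\times\A^1$, $t = b^{-1}$ where meaningful; more robustly it is $V(1-bt)$, a closed subscheme finite over $\Spec(S)$, hence finite over $\Spec(R)$. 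Its push-forward along $f\times\mathrm{id}_{\A^1}$ is, by definition of proper push-forward of cycles, the cycle $[\kappa(\xi):\kappa(f(\xi))]\cdot\overline{\{f(\xi)\}}$ summed over generic points; because $\Spec(S[t]/(1-bt))$ is finite of degree $e$ over $\Spec(R[t]/p(t)\text{-localized})$... concretely, the push-forward of $V(1-bt)$ is the vanishing locus of the norm $\mathrm{N}_{S[t]/R[t]}(1-bt) \in R[t]$, which is the characteristic-polynomial-type expression $\det(1 - bt \mid S[t])$. This is a standard fact: for a finite free extension, the proper push-forward of an effective Cartier divisor $\mathrm{div}(g)$ is $\mathrm{div}(\mathrm{N}(g))$ (see e.g. \cite[\S1.4]{Fulton} for the divisor pushforward formula). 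Thus $f_*([\Gamma_{(1-bt)}]) = [V(\mathrm{N}_{S[t]/R[t]}(1-bt))]$, and one only needs to match this, modulo the equivalence imposed by the modulus-$m$ condition, with $[\Gamma_{(1-ct)}]$ where $1-ct$ is the truncation mod $t^{m+1}$ of $\mathrm{N}_{S/R}(1-bt)$ in the sense of \eqref{eqn:WF-1}. The modulus condition precisely forces cycles $V(f(t))$ and $V(g(t))$ to be congruent when $f \equiv g \bmod t^{m+1}$ (after dividing by units at $t=0$), which is exactly the relation defining the quotient $(1+tR[[t]])^\times/(1+t^{m+1}R[[t]])^\times$, and this is already encoded in the fact that $\tau^R_{1,m}$ factors through $\W_m(R)$ and is an isomorphism by \cite[Theorem~7.12]{KP3}.

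I would organize the writeup as follows: (i) reduce via additivity and $V_i$-equivariance of all four maps to the claim $f_*\tau^S_{1,m}([b]) = \tau^R_{1,m}(\mathrm{Tr}_{S/R}([b]))$; (ii) identify $\tau^S_{1,m}([b])$ with the class of the finite cycle $[V(1-bt)]$ on $\Spec(S)\times\A^1$; (iii) compute the proper push-forward of this cycle along $f\times\mathrm{id}$ using the norm/determinant formula for push-forward of Cartier divisors under finite flat morphisms, obtaining the class of $V(\mathrm{N}_{S[t]/R[t]}(1-bt))$; (iv) identify $\mathrm{N}_{S[t]/R[t]}(1-bt)$, read modulo the modulus-$m$ equivalence, with $1 - (\mathrm{Tr}_{S/R}([b]))\text{-polynomial}$, invoking the definition of $\mathrm{Tr}_{S/R}$ via $\mathrm{N}_{S/R}$ on $(1+tS[[t]])^\times$ recalled just before the lemma; (v) conclude using that $\tau^R_{1,m}$ is the isomorphism of \cite[Theorem~7.12]{KP3} and therefore both sides are the class of the graph cycle attached to the same element of $\W_m(R)$.

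\textbf{Main obstacle.} The delicate point is step (iii)–(iv): one must verify that the \emph{additive} Chow push-forward $f_*$ — which involves the modulus condition at $t=0$ and the quotient by degenerate cycles — genuinely computes the norm of the divisor \emph{truncated modulo $t^{m+1}$}, and not merely up to some uncontrolled error. Concretely one needs that $V(1-bt)$ and its push-forward stay admissible (satisfy modulus $m$, meet all faces properly — trivial here since $n=1$), that the norm $\mathrm{N}_{S[t]/R[t]}(1-bt)$ has the form $1 - (\text{lower terms})t + \cdots$ with no constant-term vanishing at $t=0$ so that it defines an admissible modulus-$m$ cycle, and that the passage $\W_m \leftrightarrow$ truncated power series is compatible with the determinant/norm on both sides — i.e. that $\mathrm{Tr}_{S/R}$ as defined via $\mathrm{N}_{S/R}$ on $(1+tS[[t]])^\times/(1+t^{m+1})$ is literally the reduction of the honest polynomial norm. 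The last is a compatibility between two descriptions of the same map and should follow from functoriality of the determinant, but it requires care to state cleanly. I expect the cleanest route is to prove the polynomial identity $\mathrm{N}_{S[t]/R[t]}(1-bt) \equiv \gamma(\mathrm{Tr}_{S/R}([b])) \bmod t^{m+1}$ directly from the definitions, and then quote that graph cycles attached to congruent (mod $t^{m+1}$) monic-type polynomials have the same class in $\TH^1(R,1;m)$, which is implicit in \cite[Theorem~7.12]{KP3}.
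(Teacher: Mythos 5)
Your proposal is correct and takes essentially the same route as the paper: reduce to Teichm\"uller lifts $[b]$, $b \in S$, using the Witt decomposition and the fact that all four maps commute with the Verschiebungs (Proposition~\ref{prop:Trace-PF}, \eqref{eqn:propertiesDRWC2}, and \cite[Proposition~A.9]{R}), and then compute $f_*([\Gamma_{(1-bt)}]) = [\Gamma_{(\Nm_{S/R}(1-bt))}]$ via Fulton's norm formula for push-forward of divisors and match it with $\tau^R_{1,m}(\Tr_{S/R}([b]))$ through the norm description of $\Tr_{S/R}$. The compatibility you flag as the main obstacle is treated in the paper purely definitionally (the polynomial norm is a representative of the power-series norm, and the modulus-$m$ identification is built into the well-definedness of $\tau^R_{1,m}$ on $\W_m(R)$), exactly as you anticipate.
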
  

\begin{proof}
Using Proposition \ref{prop:Trace-PF} and Lemma \ref{lem:Witt finite}, it remains to check that $\tau^R_{1,m} (\Tr_{S/R} ([x])) = f_* ([ \Gamma_{(1-xt)}])$ for all $x \in S$, where $\Gamma_{(1-xt)}$ is the cycle in $\TH^1 (S, 1;m)$ corresponding to the ideal $(1-xt) \subset S[t]$. Since $[x] \in \mathbb{W}_m (S)$ corresponds to $1-xt \in (1+ t S[[t]])^{\times} / (1 + t^{m+1} S[[t]])^{\times}$, by the definition of $\Tr_{S/R}$, we have $\Tr_{S/R} ([x]) = {\rm N}_{S/R} (1 - xt)$. On the other hand, for a polynomial representative $g(t) \in (1 + t R[[t]])^{\times} / (1 + t^{m+1} R[[t]])^{\times}$, we have $\tau_{1,m} ^R ( g(t)) = [\Gamma_{(g(t))}]$, by definition . Hence, $\tau_{1,m} ^R (\Tr_{S/R} ([x])) = \tau_{1,m} ^R ({\rm N}_{R/S} (1-xt)) = [\Gamma_{({\rm N}_{S/R} (1-xt))}]$. 

By \cite[Proposition 1.4(2)]{Fulton}, we have $f_* ({\rm div} (1-xt)) = [ {\rm div} ( {\rm N}_{S/R} (1-xt))]$. Since $1-xt$ and ${\rm N}_{S/R} (1-xt)$ are regular functions, we have ${\rm div} (1-xt) = \Gamma_{(1-xt)}$ and ${\rm div} ({\rm N}_{S/R} (1-xt)) = \Gamma_{({\rm N}_{S/R} (1-xt))}$. This yields the equality of cycles $[\Gamma_{({\rm N}_{S/R} (1-xt))}] = f_* ([ \Gamma_{ (1-xt)}])$. Hence, we have $\tau_{1,m} ^R (\Tr_{S/R} ([x])) =[\Gamma_{({\rm N}_{S/R} (1-xt))}] = f_* ([ \Gamma_{ (1-xt)}])$, as desired.
\end{proof}

The main result of this section is the following.

\begin{prop}\label{prop:Trace-simple}
Assume that $k$ is a perfect field. Let $R \inj S$ be a simple extension of regular semi-local $k$-algebras essentially of finite type and let $m,n \ge 1$ be integers. Then every $ \omega \in \W_m\Omega^{n-1}_S$ is traceable to $R$.
\end{prop}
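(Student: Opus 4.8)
The plan is to reduce the traceability of an arbitrary form $\omega \in \W_m\Omega^{n-1}_S$ to the traceability of a very restricted supply of building blocks, and then to verify traceability of those blocks by an explicit geometric computation combined with the Witt-complex structure of the additive higher Chow groups proven in \cite{KP3}. Since $S = R[t]/(p(t))$ is free over $R$ with basis $\{1, a, \dots, a^{e-1}\}$ where $a = \ov{t}$, Lemma~\ref{lem:Witt finite} tells us that every element of $\W_m(S)$ is a sum of terms $V_i([c]_{\lfloor m/i\rfloor}\cdot [a]^j_{\lfloor m/i\rfloor})$ with $c \in R$. Because $\{\W_m\Omega^{\bullet}_S\}$ is generated as a differential graded algebra by $\W_m(S)$ together with the operators $d, F_r, V_r$ (universality of the de Rham–Witt complex, \S\ref{sec:DRW}), the first step is to reduce to showing that forms of the shape
\[
V_i\bigl([c]\,[a]^{j}\bigr)\, d[\alpha_1] \wedge \cdots \wedge d[\alpha_{n-1}]
\]
are traceable, where each $\alpha_\ell \in S$ and $c \in R$. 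A further simplification, using the relations among $V_r$, $F_r$, $d$ in a restricted Witt-complex and the projection formula (iii) of \S\ref{sec:DRW}, should let us absorb the Verschiebung and reduce to symbols in which only powers of $a$ and elements pulled back from $R$ occur; I expect the cleanest formulation is to reduce to forms lying in the image of $\tau^S_{n,m}$ composed with multiplication by $[a]^j$ and application of $d\log[a] = d[a]/[a]$-type expressions, i.e.\ to \emph{Witt–Milnor graph cycles over $S$}.

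\textbf{Key steps.} First I would record, using Corollary~\ref{cor:CW-Pull-back} and Proposition~\ref{prop:Trace-PF}, that $f_*\circ\tau^S_{n,m}$ intertwines all the restricted Witt-complex operators with $\tau^R_{n,m}$ on the target; this means the set of traceable forms is closed under $d$, under $F_r$, under $V_r$, under the $\W_m(R)$-module structure, and (via the projection formula) under multiplication by elements $f^*(\beta)$ coming from $R$. Hence the traceable forms constitute a sub-$\W_m(R)$-module of $\W_m\Omega^{n-1}_S$ stable under enough operations that it suffices to check traceability on a generating set. Second, the base case $n=1$ is exactly Lemma~\ref{lem:tr-p-f-Witt 2} together with $\Tr_{S/R}$ from \cite[Proposition~A.9]{R}: the diagram \eqref{eqn:tr-p-f} commutes, so every element of $\W_m(S)$ is traceable, with trace $\Tr_{S/R}$. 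Third — the inductive heart — given a symbol $[a]^j\, d[\alpha_1]\wedge\cdots\wedge d[\alpha_{n-1}]\cdot(\text{stuff from }R)$, I would represent its image under $\tau^S_{n,m}$ by the corresponding Witt–Milnor graph cycle on $\Spec(S)\times B_n$, push it forward along $f\times \mathrm{id}$, and identify the resulting cycle on $\Spec(R)\times B_n$. Using that $S$ is finite and free over $R$ via the monic polynomial $p(t)$, the push-forward of a graph cycle is again expressible through norms of the defining functions, exactly as in the $n=1$ computation via \cite[Proposition~1.4]{Fulton}; the content is that these norms, in the Milnor range, land in the image of $\tau^R_{n,m}$ because Milnor-type symbols over $R$ are closed under the norm construction on the level of cycles, i.e.\ one uses a cycle-level norm/transfer paralleling the Bass–Tate / Kato computation of $K^M_*$ transfers. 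The Witt-complex compatibilities from \cite{KP3} are what guarantee that the various Steinberg-type relations needed to rewrite these pushed-forward cycles as honest symbols over $R$ actually hold in $\TH^n(R,n;m)$.

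\textbf{Main obstacle.} The hard part is exactly the third step: proving that the push-forward along $f$ of a Witt–Milnor graph cycle over $S$ can be rewritten, modulo boundaries, as a sum of Witt–Milnor graph cycles over $R$, i.e.\ that the cycle-level ``trace'' of a symbol is a symbol. In Milnor $K$-theory one has the Bass–Tate presentation and the explicit formulas for transfers; here there is no pre-existing norm map on $\W_m\Omega^{\bullet}$, which is the whole point, so one must run the analogue of the Bass–Tate argument directly on additive cycles — splitting $p(t)$ into linear-in-$t$ pieces over successive simple subextensions, handling the ``tame symbol'' contributions at the places where the functions degenerate, and checking at each stage that the modulus condition and proper intersection with faces are preserved under these manipulations. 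The Witt-complex structure of $\TH^{\bullet}(R;m)$ from \cite{KP3} (products, differentials, $F_r$, $V_r$, and the identities (i)–(v)) is precisely the toolkit that makes these rewritings legitimate, and verifying that each rewriting step stays inside the additive cycle complex — rather than the main conceptual difficulty — is where the real technical work lies. I would organize the induction on $n$ and on $\deg(p)$, peeling off one factor $d[\alpha_\ell]$ and one linear factor of $p$ at a time, with the $n=1$ case of Lemma~\ref{lem:tr-p-f-Witt 2} as the anchor.
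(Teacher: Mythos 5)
Your reductions (stability of traceable forms under the Witt-complex operators, the base case $n=1$ via Lemma~\ref{lem:tr-p-f-Witt 2}, and the decomposition of $\W_m(S)$ through Lemma~\ref{lem:Witt finite}) match the paper, but the step you yourself flag as the ``main obstacle'' is a genuine gap, and it is not the route the paper takes. You propose to push forward a general Witt--Milnor graph cycle over $S$ and rewrite $f_*$ of it, modulo boundaries, as a sum of symbols over $R$ by running a Bass--Tate/Kato-style norm argument directly on additive cycles. Nothing in the paper (or in the literature it cites) establishes that such Steinberg-type rewritings can be performed at the cycle level while preserving the modulus condition and proper intersection with faces; indeed the absence of any pre-existing norm/trace on $\W_m\Omega^{\bullet}$ is exactly why the ``traceability'' formalism was introduced, so your plan asks you to build, for symbols, essentially the very trace map whose nonexistence is the point. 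The claim that ``Milnor-type symbols over $R$ are closed under the norm construction on the level of cycles'' is unsubstantiated and cannot simply be quoted.

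What you are missing is the observation that makes the hard step unnecessary for a \emph{simple} extension $S=R[a]$: after Lemma~\ref{lem:Witt finite}, every generator of $\W_m\Omega^{n-1}_S$ has the form $V_{r_0}([c_0][a]^{i_0})\,dV_{r_1}([c_1][a]^{i_1})\wedge\cdots\wedge dV_{r_{n-1}}([c_{n-1}][a]^{i_{n-1}})$ with all $c_j\in R$, so the only element of $S$ whose differential ever occurs is $a$ itself. The restricted Witt-complex relation (v), $[a]^i d[a]=F_{i+1}d[a]$, together with the facts that $f_*$ commutes with $F_r$, $V_r$, $\delta$, $\fR$ (Proposition~\ref{prop:Trace-PF}), that $f^*$ commutes with $\tau$ (Corollary~\ref{cor:CW-Pull-back}), and the projection formula $f_*(f^*(x)\cdot y)=x\cdot f_*(y)$, reduce every such generator --- by a double induction on $(n,m)$, stripping Verschiebungs with $V_r(F_r(x)y)=xV_r(y)$ and the Leibniz rule --- to the single $n=1$ trace $f_*\tau^S_{1,m'}([a])=\tau^R_{1,m'}(\Tr_{S/R}([a]))$ of Lemma~\ref{lem:tr-p-f-Witt 2}. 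No push-forward of a higher-dimensional graph cycle is ever computed geometrically. Note also that terms containing $d[a]\wedge d[a]$ are killed because $2\in(\W_m(S))^{\times}$; this is where the hypothesis $\Char(k)\neq 2$ enters, and your outline never uses it, which is a further sign that the intended mechanism has been missed.
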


\begin{proof}
Let $p(t) \in R[t]$ be a monic polynomial of degree $e$ such that $S \simeq {R[t]}/{(p(t))}$. Let $a= t \mod p(t)$ so that $\{1, a, \ldots, a^{e-1}\}$ is an $R$-basis of $S$. For $m,n \ge 1$, let $P_{n,m}$ be the statement: \emph{every member of $\mathbb{W}_m \Omega_{S} ^{n-1}$ is traceable to $R$.}

We prove the proposition by a double induction argument on the variables $(n,m) \in \mathbb{N} \times \mathbb{N}$. 
Note that the statement $P_{1,m}$ holds for all $m \geq 1$ by Lemma~\ref{lem:tr-p-f-Witt 2}. In particular, $P_{1,1}$ is also true.

\textbf{Case 1:} We show first that $P_{n,1}$ holds for all $n \geq 1$. 

\textbf{Subcase 1-1:} To show $P_{2,1}$, note that every element of $\W_1 \Omega^1_S \simeq \Omega^1_{S/\mathbb{Z}}$ is a finite sum of $1$-forms of the type $ca^id(c'a^j) = ca^{i+j}dc' + jcc'a^{i+j-1}da$ for some $c, c' \in R$. So, we are reduced to showing that $1$-forms of the types $ca^idc'$ and $ca^ida$ are traceable for all $c, c' \in R$ and $i \ge 0$. 

For $ca^i dc'$, we have 
\begin{equation}\label{eqn:Tr-cycle-0}
\begin{array}{lll}
 &  \ f_* \circ \tau^S_{2,1}(ca^idc')  {=}^{\dagger}  f_*\left(  \tau^S_{1,1}(a^i)  \cdot  \tau^S_{2,1}(cdc')   \right)   {=}   f_*\left(  \tau^S_{1,1}(a^i)  \cdot  \tau^S_{2,1}(f^*(cdc'))  \right) \\ 
& {=}^\ddagger  f_*\left( \tau^S_{1,1}(a^i)  \cdot  f^*\left(\tau^R_{2,1}(cdc')\right)   \right) {=}^1 f_* (\tau^S_{1,1}(a^i) )\cdot  \tau^R_{2,1}(cdc')    \\
& {=}^2 \tau^R_{1,1}\left({\rm Tr}_{S/R}(a^i)\right)  \cdot  \tau^R_{2,1}(cdc')  {=}^\dagger  \tau^R_{2,1} \left({\rm Tr}_{S/R}(a^i) \cdot (cdc')  \right).
\end{array}
\end{equation}

Here, the equalities ${=}^\dagger$ hold because $\tau_{m,n} ^R$ and $\tau_{m,n} ^S$ are morphisms of DGAs. The equality ${=}^\ddagger$ holds by \propref{prop:CW-Pull-back}, ${=}^1$ by the projection formula for the additive higher Chow groups (e.g., see \cite[Theorem~3.19]{KP2}, whose proof does not require the projectivity of the underlying schemes), and ${=}^2$ holds by Lemma~\ref{lem:tr-p-f-Witt 2}. We conclude that $1$-forms of the type $ca^idc'$ are traceable to $R$.

Next, for $ca^i da$, note that by the part of definition of a restricted Witt-complex in \S \ref{sec:DRW}(v), we can write $ca^i da = c F_{i+1} d[a]$, using the Frobenius operator. Hence,
\begin{equation}\label{eqn:Tr-cycle-1}
\begin{array}{lll}
 &\  f_* \circ \tau^S_{2,1}(ca^ida)  {=}   f_* \circ \tau^S_{2,1}(cF_{i+1}d[a])  =  f_* \circ \tau^S_{2,1}(f^*(c)F_{i+1}d[a])\\
 &=^0 f_* (\tau_{1,1} ^S (f^* (c))\cdot \tau_{2,1} ^S (F_{i+1} d[a]))  = ^{\dagger}  f_* \left(\tau^S_{1,1}(f^*(c)) \cdot F_{i+1} \delta \tau^S_{1,2i+1}([a])\right) \\
 & {=}^\ddagger  f_* \left(f^*(\tau^R_{1,1}(c) ) \cdot F_{i+1} \delta \tau^S_{1,2i+1}([a])\right) {=}^1  \tau^R_{1,1}(c) \cdot f_*F_{i+1} \delta \tau^S_{1,2i+1}([a]) \\
 & {=}^2 \tau^R_{1,1}(c) \cdot F_{i+1} \delta f_*\tau^S_{1,2i+1}([a])  {=}^3  \tau^R_{1,1}(c) \cdot F_{i+1} \delta \tau^R_{1,2i+1}({\rm Tr}_{S/R}([a])) \\
 & {=}^\dagger  \tau_{1,1} ^R (c) \cdot F_{i+1} \tau_{2, 2i+1} ^R (d ({\rm Tr} _{S/R} ([a]))) = ^\dagger \tau^R_{1,1}(c) \cdot \tau^R_{2,1} F_{i+1} d ({\rm Tr}_{S/R}([a])) \\ 
 &{=}^0  \tau^R_{2,1} \left(c \cdot F_{i+1} d ({\rm Tr}_{S/R}(a))\right),
\end{array}
\end{equation}
where the equalities $=^0$ hold because $\tau_{n,m} ^R$ and $\tau_{n,m} ^S$ are morphisms of DGAs, the equalities $=^\dagger$ hold by \eqref{eqn:propertiesDRWC}, the equality ${=}^\ddagger$ holds by Proposition~\ref{prop:CW-Pull-back}, ${=}^1$ holds by the projection formula for $f_*$ and $f^*$, ${=}^2$ holds by Proposition \ref{prop:Trace-PF}, and ${=}^3$ holds by Lemma~\ref{lem:tr-p-f-Witt 2}. We conclude that $1$-forms of the type $ca^ida$ are traceable to $R$. Hence, $P_{2,1}$ is true.

\textbf{Subcase 1-2:} Suppose now that $n>2$ and that the statements $P_{i,1}$ are true for all $1 \leq i < n$. It suffices to show that the forms of the type $\omega = c_0 a^{i_0} d (c_1 a^{i_1}) \wedge \cdots \wedge d (c_{n-1} a^{i_{n-1}})$ are traceable to $R$, where $c_0, \ldots, c_{n-1} \in R$ and $i_0, \ldots, i_{n-1} \geq 0$ are integers. Each $d (c_j a^{i_j}) $ is equal to $a^{i_j} d c_j + i_j c_j a^{i_j -1} d a$ by the Leibniz rule, so that expanding the terms of $\omega$, we reduce to show that every element of the form
\[
\omega_0:= c_0 a^i dc_1 \wedge \cdots \wedge dc_s \wedge \underset{ n-s-1}{\underbrace{ da \wedge \cdots \wedge da}}
\]
is traceable to $R$, where $0 \leq s \leq n-1$ and $c_0, \ldots, c_s \in R$. 

$\bullet$ If $n-s-1 =0$, then the traceability of $\omega$ follows by repeating the steps in \eqref{eqn:Tr-cycle-0} verbatim. 

$\bullet$ If $n-s-1 =1$, let $\omega_0 ' := a^i da$ so that $\omega_0 = c_0 dc_1 \wedge \cdots \wedge dc_s \wedge \omega_0 '$. Since $P_{2, 1}$ is true, we can write $f_* \tau^S_{2,1}(\omega_0') =^\spadesuit \tau^R_{2,1}(\omega_0 '')$ for some $\omega_0 '' \in \Omega^1_{R/\mathbb{Z}}$. Set $\eta := c_0 dc_1 \wedge \cdots \wedge dc_s \in \Omega^s_{R/\mathbb{Z}}$. Then, we have
\begin{equation}\label{eqn:Tr-cycle-2}
\begin{array}{lll}
& \ f_* \circ \tau^S_{n,1}(\omega_0)  =   f_* \circ \tau^S_{n,1} \left(\eta \wedge \omega_0 '\right)  =  f_* \circ \tau^S_{n,1} \left(f^*(\eta) \wedge \omega_0 '\right) \\
& {=}^\dagger  f_*  \left( \tau_{n-1,1} ^S f^* (\eta) \wedge \tau_{2,1} ^S (\omega_0 ') \right) =^\ddagger   f_* \left(f^* \tau^R_{n-1,1}(\eta) \wedge \tau^S_{2,1}(\omega_0 ') \right)  \\
& {=}^1   \tau^R_{n-1,1}(\eta) \wedge f_* \tau^S_{2,1}(\omega_0 ')  {=}^\spadesuit   \tau^R_{n-1,1}(\eta) \wedge \tau^R_{2,1}(\omega_0 '')  {=} ^{\dagger}   \tau^R_{n,1} \left(\eta \wedge \omega_0 ''\right),
\end{array}
\end{equation}
where the equalities ${=}^\dagger$ hold because $\tau_{n,m} ^R$ and $\tau_{n,m}^S$ are morphisms of DGAs, $=^{\ddagger}$ holds by ~\propref{prop:CW-Pull-back}, and ${=}^1$ holds by the projection formula for $f_*$ and $f^*$. We conclude that $\omega_0$ is traceable to $R$.

$\bullet$ If $n-1-s > 1$, then $\omega_0 = 0$ because $da \wedge da=0$ by Lemma \ref{lem:dada=0}. So, $\omega_0$ is traceable to $R$. 

We have thus shown so far that $P_{n,1}$ and $P_{1,m}$ are true for all $n,m \geq 1$. 

\vskip .2cm

\textbf{Case 2:} We now show that $P_{n,m}$ is true in general by using double induction on $(n,m)$. Fix $m, n \geq 2$ and suppose that we know $P_{i,j}$ holds for all $1 \leq i \leq n$, $1 \leq  j \leq m$, except $(i,j) = (n,m)$.

Through the surjection $\Omega_{\mathbb{W}_m (S)} ^{n-1} \surj \mathbb{W}_m \Omega _S ^{n-1}$ and Lemma \ref{lem:Witt finite}, we know that every element in $\W_m\Omega^{n-1}_S$ is a sum of de Rham-Witt forms of the type $\omega = V_{r_0}([c_0][a] ^{i_0}) \cdot d V_{r_1} ([c_1][a]^{i_1}) \wedge \cdots \wedge dV_{r_{n-1}} ([c_{n-1}][a] ^{i_{n-1}})$, where $c_0, \ldots, c_{n-1} \in R$, $r_0, \ldots, r_{n-1} \in \{ 1, \ldots, m \}$, and $ 0 \leq i_0, \cdots, i_{n-1} \leq e-1$.

\textbf{Subcase 2-1:} First, consider the case $r_0 >1$. Let $\omega_0 := d V_{r_1} ([c_1][a]^{i_1}) \wedge \cdots \wedge dV_{r_{n-1}} ([c_{n-1}][a] ^{i_{n-1}})$. In this case, we can write 
\[
\omega = V_{r_0} ([c_0][a] ^{i_0}) \cdot \omega_0 = V_{r_0} ( [ c_0][a] ^{i_0} \cdot F_{r_0} (\omega_0))
\] 
by the projection formula for $V_r$ and $F_r$ (see \S \ref{sec:DRW}(iii)). Since $\omega_0' := [c_0][a] ^{i_0} \cdot F_{r_0} (\omega_0) \in \W_{\lfloor {m/{r_0}} \rfloor}\Omega^{n-1}_S$, it is traceable to $R$ by the induction hypothesis $P_{n,\lfloor {m/{r_0}} \rfloor}$. In particular, there exists $\eta \in \W_{\lfloor {m/{r_0}} \rfloor}\Omega^{n-1}_R$ such that $f_* \tau^S_{n, \lfloor {m/{r_0}} \rfloor}(\omega_0 ') =^\clubsuit \tau^R_{n, \lfloor {m/{r_0}} \rfloor}(\eta)$. This in turn yields 
\[
\begin{array}{lll}
& \ f_* \tau^S_{n,m}(\omega) = f_* \tau^S_{n,m} V_{r_0}(\omega_0 ') =^{\dagger} f_* V_{r_0} \tau_{n, \lfloor {m/{r_0}} \rfloor} ^S (\omega_0')\\
&=^\ddagger V_{r_0} f_* \tau^S_{n, \lfloor {m/{r_0}} \rfloor}(\omega_0 ') = ^\clubsuit  V_{r_0} \tau^R_{n, \lfloor {m/{r_0}} \rfloor}(\eta)= ^\dagger \tau^R_{n,m} \left(V_{r_0} \eta\right),
\end{array}
\]
where the equalities $\dagger$ hold by \eqref{eqn:propertiesDRWC2}, and $\ddagger$ holds by Proposition \ref{prop:Trace-PF}. This shows that $\omega$ is traceable to $R$.

\textbf{Subcase 2-2:} Suppose now that $r_0 = 1$, but for some $j>0$, we have $r_j > 1$. We may assume that $r_1 >1$ without loss of generality. We let $\omega_0:=d V_{r_2} ([c_2][a]^{i_2}) \wedge \cdots \wedge dV_{r_{n-1}} ([c_{n-1}][a] ^{i_{n-1}})$. By the Leibniz rule, we have 
\[
\begin{array}{lll}
& \ V_{r_0}([c_0][a] ^{i_0}) \cdot d V_{r_1} ([c_1][a] ^{i_1})  =  [c_0][a] ^{i_0} \cdot d V_{r_1} ([c_1][a] ^{i_1}) \\
& = d ( [ c_0][a]^{i_0} \cdot V_{r_1} ([c_1][a] ^{i_1})) - V_{r_1} ([c_1][a] ^{i_1})  \cdot d ([c_0][a] ^{i_0}).
\end{array}
\]

Hence, $\omega = \omega_1 - \omega_2$, where $\omega_1:= d ([c_0][a] ^{i_0}\cdot V_{r_1} ([c_1][a] ^{i_1}))\wedge \omega_0 $ and $\omega_2 = V_{r_1} ([c_1][a] ^{i_1}) \cdot d ([c_0][a] ^{i_0}) \wedge \omega_0. $ Let $\omega_1 ' := [c_0][a] ^{i_0}\cdot V_{r_1} ([c_1][a] ^{i_1})$ so that $\omega_1 = d \omega_1' \wedge \omega_0 = d (\omega_1 ' \cdot \omega_0)$. 

Since $\omega_1 ' \cdot \omega_0 \in \W_m\Omega^{n-2}_S$, it follows by the induction hypothesis $P_{n-1, m}$ that there is an element $\eta \in \W_m\Omega^{n-2}_R$ such that $f_* \tau^S_{n-1,m}(\omega_1 ' \cdot \omega_0) =^{\heartsuit} \tau^R_{n-1,m}(\eta)$. Thus, 
\[
\begin{array}{lll}
& \ f_* \tau^S_{n,m}(\omega_1)  =  f_* \tau^S_{n,m}(d(\omega_1 ' \cdot \omega_0)) = ^{\dagger} f_* \delta (\tau_{n-1, m} ^S (\omega_1 ' \cdot \omega_0))\\
& =^\ddagger  \delta f_*\tau^S_{n-1,m}(\omega_1 ' \cdot \omega_0)  = ^{\heartsuit} \delta \tau^R_{n-1,m}(\eta)  =^{\dagger}  \tau^R_{n,m}(d\eta),
\end{array}
\]
where the equalities $=^\dagger$ hold by \eqref{eqn:propertiesDRWC} because $\tau_{n,m}^R$ and $\tau_{n,m} ^S$ are morphism of DGAs, and $=^\ddagger$ holds by Proposition \ref{prop:Trace-PF}. Hence $\omega_1$ is traceable to $R$. Since $\omega_2$ is of the form considered in \textbf{Subcase 2-1}, it is also traceable to $R$. Thus, $\omega= \omega_1 - \omega_2$ is traceable to $R$.

\textbf{Subcase 2-3:} Now, the remaining case is when all $r_0= r_1 = \cdots = r_{n-1} = 1$, {i.e.}, $\omega= [c_0] [a] ^{i_0} d ([c_1][a] ^{i_1}) \wedge \cdots \wedge d( [c_{n-1}][a] ^{i_{n-1}})$. Its proof is almost identical to that of \textbf{Subcase 1-2}, which we argue now. Each $d ([c_j][a] ^{i_j})$ is equal to $[a]^{i_j} d [c_j] + i_j [c_j] [a]^{i_j -1} d [a]$ by the Leibniz rule, so that expanding the terms of $\omega$, we are reduced to show that elements of the form
\begin{equation}\label{eqn:c3}
\omega_0:= [c_0] [a ]^i d [c_1] \wedge \cdots \wedge d [c_s] \wedge 
\underset{n-s-1}{\underbrace{ d[a] \wedge \cdots \wedge d[a] }}
\end{equation}
are traceable, where $0 \leq s \leq n-1$ and $c_0, \ldots, c_s \in R$. 

$\bullet$ If $n-s-1 =0$, then we can use ~\propref{prop:CW-Pull-back}, Lemma~\ref{lem:tr-p-f-Witt 2}, and repeat the steps of \eqref{eqn:Tr-cycle-0} verbatim to conclude that $\omega_0$ is traceable to $R$.

$\bullet$ If $n-s-1 =1$, let $\omega_0 ' := [a]^i d[a]$ so that $\omega_0 = [c_0] d[c_1] \wedge \cdots \wedge d[c_{n-2}] \wedge \omega_0 '$. By the part of definition of a restricted Witt-complex in \S \ref{sec:DRW}(v), we can write $\omega_0 ' = [a] ^i d[a] = F_{i+1}d[a]$. Set $\eta = [c_0] d[c_1] \wedge \cdots \wedge d[c_{n-2}] \in \W_m\Omega^{n-2}_R$, so that $\omega_0= \eta \wedge F_{i+1} d [a]$. (Remember, here $n \geq 2$.) This yields
\[
\begin{array}{lll}
& \ f_*\tau^S_{n,m}(\omega_0)  =  f_*\tau^S_{n,m}(\eta \wedge F_{i+1}d[a]) =  f_* \tau^S_{n,m} \left(f^*(\eta) \wedge  F_{i+1}d[a] \right) \\
&=^{\dagger} f_* ( \tau_{n-2, m} ^S (f^* (\eta))\wedge \tau_{2,m} ^S F_{i+1} d[a]) =^{\ddagger}  f_*\left(f^* \tau^R_{n-2,m}(\eta) \wedge \tau^S_{2,m} F_{i+1}d[a]\right)  \\
&{=}^0  \tau_{n-2, m} ^R (\eta) \wedge f_*\left( \tau_{2,m } ^S F_{i+1} d[a] \right) =^1 \tau_{n-2, m} ^R \wedge f_* ( F_{i+1} \tau_{2, (i+1)m+i} ^S d[a] ) \\
&=^1 \tau^R_{n-2,m}(\eta) \wedge f_*(F_{i+1}  \delta \tau^S_{1,(i+1)m+i}([a]))  {=}^2  \tau^R_{n-2,m}(\eta) \wedge F_{i+1} \delta f_* \tau^S_{1,(i+1)m+i}([a]) \\
& {=}^3  \tau^R_{n-2,m}(\eta) \wedge F_{i+1} \delta \tau^R_{1,(i+1)m+i}({\rm Tr}_{S/R}([a]))  \\
&= ^1 \tau^R_{n-2,m}(\eta) \wedge F_{i+1}  \tau^R_{2,(i+1)m+i} d ({\rm Tr}_{S/R}([a]))  =^1  \tau^R_{n-2,m}(\eta) \wedge \tau^R_{2,m} F_{i+1} d ({\rm Tr}_{S/R}([a]))  \\
&=^\dagger  \tau^R_{n,m} \left(\eta \wedge F_{i+1} d ({\rm Tr}_{S/R}([a])\right),  
\end{array}
\]
where the equalities $=^\dagger$ hold because $\tau_{n,m} ^R$ and $\tau_{n,m} ^S$ are morphisms of DGAs, the equality $=^{\ddagger}$ holds by \propref{prop:CW-Pull-back}, the equality ${=}^0$ is the projection formula for $f_*$ and $f^*$, the equalities $=^1$ hold by \eqref{eqn:propertiesDRWC}, the equality ${=}^2$ holds by Proposition \ref{prop:Trace-PF}, and ${=}^3$ follows from Lemma~\ref{lem:tr-p-f-Witt 2}. This shows that $\omega_0$ is traceable to $R$.

$\bullet$ If $n-s-1 > 1$, we set $\omega_0 ' =  \underset{n-s-1}{\underbrace{ d[a] \wedge \cdots \wedge d[a] }}$. By Lemma \ref{lem:dada=0}, we have $d[a] \wedge d[a] = 0$ in $\W_m\Omega^{2}_S$. In particular, $\omega_0 ' = 0$ in $\W_m\Omega^{n-s-1}_S$ so that $\omega_0 =0$, which is traceable to $R$. We have thus shown that $P_{n,m}$ holds. The proof of the proposition is now complete. 
\end{proof}

\section{The proof of the main result}\label{sec:PRF}
In this section, we complete the proof of our main result, \thmref{thm:Main-1}. We do this first under the assumption that the base field is infinite and perfect, where we apply the sfs-moving lemma (Theorem~\ref{thm:sfs-TCH}). We then use a pro-$\ell$ extension argument to prove the result over any perfect field. Finally, the case of imperfect base field is done using the limit arguments of \S~\ref{sec:WCADC}. 

\subsection{Symbolicity of sfs-cycles}\label{sec:subsection:PML}
For a while in \S \ref{sec:subsection:PML}, we suppose $k$ is an infinite perfect field. 
We begin with the following description of the de Rham-Witt-Chow homomorphism $\tau^R_{n,m}$ on special kinds of de Rham-Witt forms.

\begin{lem}\label{lem:Main-2-symbol}
Let $k$ be an infinite perfect field.
Let $R$ be a regular semi-local $k$-algebra essentially of finite type. Let $m,n \ge 1$ be two integers. Let $a \in R$ and $b_i \in R^{\times}$ for $1 \le i \le n-1$. Then 
\[
\tau^R_{n,m}([a]d\log[b_1] \wedge \cdots \wedge d\log[b_{n-1}]) = Z_{a, \un{b}},
\]
where $Z_{a, \un{b}} = \Spec(\frac{R[t, y_1, \ldots , y_{n-1}]} {(1-at, y_1-b_1, \ldots , y_{n-1}- b_{n-1})})$.
\end{lem}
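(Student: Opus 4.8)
\textbf{Proof proposal for Lemma~\ref{lem:Main-2-symbol}.} The plan is to verify the formula by unwinding the construction of $\tau^R_{n,m}$ via the universal property of the big de Rham--Witt complex, and then computing both sides on the explicit symbolic form $[a]\, d\log[b_1]\wedge\cdots\wedge d\log[b_{n-1}]$. First I would recall that $\tau^R_{1,m}=\tau_R$ sends a Teichm\"uller element $[a]\in\W_m(R)$ to the class of the cycle $\Gamma_{(1-at)}\subset \Spec(R)\times\square$, as used already in Lemma~\ref{lem:CW-Pull-back-I} and established in \cite[Theorem~7.12]{KP3}. Since $\tau^R_{\bullet,m}$ is a morphism of restricted Witt-complexes over $R$ (see \eqref{eqn:DRWC}, \eqref{eqn:propertiesDRWC}), it is in particular a morphism of differential graded algebras, so
\[
\tau^R_{n,m}\big([a]\,d\log[b_1]\wedge\cdots\wedge d\log[b_{n-1}]\big)
= \tau^R_{1,m}([a]) \cdot \delta\log\tau^R_{1,m}([b_1]) \cdots \delta\log\tau^R_{1,m}([b_{n-1}]),
\]
where $\delta$ is the differential on $\TH^\bullet(R,\bullet;m)$ and $\delta\log(x)$ abbreviates the appropriate ``$\delta x / x$'' expression; more precisely, using $d\log[b] = [b]^{-1}d[b]$ and that $[b]\in\W_m(R)^\times$ when $b\in R^\times$, I would reduce the computation of $\delta\log\tau^R_{1,m}([b_i])$ to the cycle-theoretic description of the differential $\delta$ given in \cite[Definition~6.2]{KP3}, which is push-forward along $\delta_X(x)=(x,x^{-1})$.

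The core of the argument is then a direct identification of the product cycle. The cycle $\tau^R_{1,m}([a])$ is $\Gamma_{(1-at)}$, living on $\Spec(R)\times\A^1$ with coordinate $t$. For each $i$, I expect that $\delta\log\tau^R_{1,m}([b_i])$ is represented, up to sign and up to a degenerate cycle, by the cycle on $\Spec(R)\times\square$ cut out by $y_i - b_i$ (the graph of the constant function $b_i$ in the $\square$-coordinate $y_i$), because applying $\delta$ to the Teichm\"uller cycle $\Gamma_{(1-b_it)}$ and then dividing out produces exactly the graph $\{y_i = b_i\}$; this is the additive-cycle incarnation of the classical fact that $d\log$ of a unit corresponds to a graph cycle in the Milnor/Bloch picture. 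Taking the exterior product of these cycles (i.e.\ the external product followed by pullback along the diagonal, as in \propref{prop:cap product} and \cite[Corollary~5.10]{KP3}) yields the cycle
\[
Z_{a,\un b} = \Spec\!\left(\frac{R[t,y_1,\ldots,y_{n-1}]}{(1-at,\ y_1-b_1,\ldots,y_{n-1}-b_{n-1})}\right)
\]
on $\Spec(R)\times B_n$, and one checks it is admissible: it is finite and surjective over $\Spec(R)$, it meets all faces $\{y_{i}=0,\infty\}$ emptily (since $b_i\in R^\times$), and it satisfies the modulus $m$ condition because on its normalization the only relevant divisor is $\{t=0\}$, which is disjoint from the locus $\{1-at=0\}$, so $F_{n,0}$ pulls back to $0$.

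The main obstacle I anticipate is the bookkeeping in the second paragraph: pinning down precisely, with correct signs and modulo degenerate cycles, that $\delta\log\tau^R_{1,m}([b_i])$ is the graph cycle $\{y_i=b_i\}$, and that the iterated external product of the Teichm\"uller cycle with these graph cycles reduces (after the diagonal pullback and the reindexing of $\square$-coordinates) to the single complete-intersection cycle $Z_{a,\un b}$ rather than some correction term supported on faces. This requires carefully using the DGA and $d\log$ relations in a restricted Witt-complex together with the explicit formulas for products and the differential from \cite{KP3}; I would handle the $n=1$ case directly (where it is just $\tau_R([a])=\Gamma_{(1-at)}$), the $n=2$ case to fix signs and conventions, and then proceed by induction on $n$, at each step multiplying the previously obtained cycle by $\delta\log\tau^R_{1,m}([b_{n-1}])$ and invoking admissibility of the product from \cite[Corollary~5.10]{KP3}. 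Once the cycle-level identity is established, the lemma follows since $\tau^R_{n,m}$ lands in $\TH^n(R,n;m)$ and $Z_{a,\un b}$ is visibly a cycle representative.
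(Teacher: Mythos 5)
Your overall strategy is the same as the paper's: use that $\tau^R_{\bullet,m}$ is a morphism of restricted Witt-complexes (hence of DGAs), compute the images of the degree-zero generators explicitly, and then combine via the product structure. There is however a bookkeeping imprecision that you should fix before your argument is sound, and the paper's treatment sidesteps it neatly.

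The issue is your identification of $\delta\log\tau^R_{1,m}([b_i])$. This element lives in $\TH^2(R,2;m)$, whose cycles live on $\Spec(R)\times B_2 = \Spec(R)\times\A^1\times\square$, \emph{not} on $\Spec(R)\times\square$. Carrying out the computation: $\delta\tau^R_{1,m}([b])$ is the cycle $\Spec\!\bigl(R[t,y]/(1-bt,\,y-b)\bigr)$ (push-forward along $x\mapsto(x,x^{-1})$), and then multiplying by $\tau^R_{1,m}([b^{-1}]) = \Gamma_{(1-b^{-1}t)}$ using the DGA product — which involves the concatenation map $\mu$ that multiplies the two $\A^1$-coordinates — gives $\Spec\!\bigl(R[t,y]/(1-t,\,y-b)\bigr)$, i.e.\ the graph $\{y=b\}$ \emph{together with} the constraint $t=1$ in the $\A^1$-coordinate. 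If one then multiplies by $\tau^R_{1,m}([a]) = \Gamma_{(1-at)}$ (again via the $\mu$-twisted product), the $t$-coordinates multiply as $a^{-1}\cdot 1 = a^{-1}$ and one recovers $Z_{a,\un b}$. In your write-up you also conflate two different products: the cap product of Proposition~\ref{prop:cap product} is $\Delta_R^*(a\times\zeta(b))$ for $a\in\TH$ and $b\in\CH$ with no $\mu_*$, whereas the DGA product $\TH\times\TH\to\TH$ that you actually need here is $\mu_*\circ\Delta_R^*$ of the exterior product; these are compatible but not identical, and the presence of $\mu_*$ is precisely what makes the $\A^1$-coordinates interact correctly.

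The paper avoids all of this by a small but clean preliminary rewriting: since the Teichm{\"u}ller lift is multiplicative, $[a]\,d\log[b] = [ab^{-1}]\,d[b]$, and then one only has to compute the product $\tau^R_{1,m}([ab^{-1}])\cdot\delta\tau^R_{1,m}([b])$, for which both factors have an explicit $\A^1$-coordinate and the $\mu$-twisted product immediately produces $1-at=0$. It also reduces recursively to $n=2$ directly (using the DGA morphism property), rather than inducting on $n$. Your proposal should go through once you (i) replace the graph cycle $\{y_i=b_i\}$ with the correct cycle $\{t=1,\,y_i=b_i\}\subset\Spec(R)\times\A^1\times\square$ and (ii) consistently use the $\mu$-twisted DGA product rather than the cap product of Proposition~\ref{prop:cap product}; alternatively, adopting the $[ab^{-1}]\,d[b]$ rewriting would eliminate the need to compute $\delta\log$ as a separate cycle at all.
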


\begin{proof}
This is an easy consequence of the fact that $\tau^R_{\bullet}$ is a morphism of restricted Witt-complexes over $R$ (see ~\eqref{eqn:dRWC}) and it follows exactly by the method used in the computations in \cite[(7.5)]{KP3}. Indeed, by recursively applying the fact that $\tau^R_{\bullet}$ is a morphism of DGAs, it suffices to show the lemma when $n = 2$.

In this case, we have $\tau^R_{2, m}([a]d\log[b]) {=}^{\dagger} \tau^R_{2, m}([ab^{-1}]d[b]) = \tau^R_{1,m}([ab^{-1}]) \wedge (\delta \circ
\tau^R_{1,m}([b]))$, where ${=}^{\dagger}$ holds because the Teichm{\"u}ller lift map is multiplicative.
Using the definition of the differential $\delta$ on additive higher Chow groups (see ~\eqref{eqn:Diff}), we have
\[
\begin{array}{lll}
& & \tau^R_{1,m}([ab^{-1}]) \wedge (\delta \circ \tau^R_{1,m}([b]))  =  \left[\Spec\left(\frac{R[t]}{(1-ab^{-1}t)}\right)\right] \wedge \left[\delta\left(\Spec(\frac{R[t]}{(1-bt)})\right)\right] \\
& = & \left[\Spec\left(\frac{R[t]}{(1-ab^{-1}t)}\right)\right] \wedge \left[\Spec\left(\frac{R[t,y]}{(1-bt, y-b)}\right)\right] =  \Delta^*_R\left(\frac{(R \otimes_k R)[t, y]} {(1-(ab^{-1}\otimes b)t, y - (1 \otimes b))}\right) \\
\end{array}
\]
and the last term is equal to $Z_{a,b}$ because $\Delta_R: \Spec (R) \to \Spec (R) \times \Spec (R) \simeq \Spec ( R \otimes _k R)$ is induced by the product map $R \otimes_k R \to R$.
\end{proof}

\begin{prop}\label{prop:sfs-sym}
Let $k$ be an infinite perfect field. Let $R$ be a regular semi-local $k$-algebra essentially of finite type. Let $m,n \ge 1$ be two integers. Then every cycle class in $\TH^n_{\sfs}(R,n;m)$ is in the image of $\tau_{n,m} ^R$.
\end{prop}

\begin{proof}
Let $Z \in \TZ^n_{\sfs}(R,n;m)$ be an irreducible $\sfs$-cycle. By Proposition \ref{prop:sfs cycles}, we know that $Z$ is a closed subscheme of $\Spec (R)  \times \A^1_k \times \ov{\square}^{n-1}_k$, which is in fact contained in $\Spec (R)  \times \A^1_k \times \A^{n-1}_k$. Moreover, $\partial Z = 0$ and the ideal $I(Z)$ of $Z$ inside $R[t, y_1, \ldots , y_{n-1}]$ is given by the equations of the form:
\begin{equation}\label{eqn:triangular}
P(t) = 0, \ \ \ Q_1 (t, y_1) = 0, \ \  \ \ldots,  \ \ \ Q_{n-1} (t, y_1, \ldots, y_{n-1}) = 0,
\end{equation}
such that if we let $R_0 = R, R_1 = {R[t]}/{(P(t))}$ and $R_i = {R_{i-1}[t_i]}/{(Q_{i-1})}$ for $2 \le i \le n$, then the rings $\{R_i\}_{1 \le i \le n}$ are all smooth semi-local $k$-algebras such that each extension $R_{i-1} \subset R_i$ is simple. 

Let $f_i \colon \Spec (R_i) \to \Spec (R_{i-1}) $ be the induced finite surjective map of smooth semi-local schemes for $1 \le i \le n$. They are all flat by \cite[Exercise III-10.9, p.276]{Hartshorne} (or \cite[Proposition (6.1.5), p.136]{EGA4-2}). Let $f = f_1 \circ \cdots \circ f_n$.

Let $c^{-1}: = t \ {\rm mod} \ I(Z)$ and $b_i := y_i \ {\rm mod} \ I(Z)$ for $1 \le i \le n-1$. Note that a consequence of the $\sfs$-property of $Z$ is that $c^{-1}, b_i \in R_n^{\times}$ for all $1 \le i \le n-1$. Let $Z_n = 
\Spec\left(\frac{R_n[t, y_1, \ldots , y_{n-1}]}{(1-ct, y_1 - b_1, \ldots , y_{n-1} - b_{n-1})}\right)$ and let $\eta_n:= [c] d\log[b_1] \wedge \cdots \wedge d\log[b_{n-1}]$. It follows that $Z_n \in \TZ^n(R_n,n;m)$ such that $Z = f_*(Z_n)$.

By \lemref{lem:Main-2-symbol}, we have $Z_n =  \tau_{n,m} ^{R_n} (\eta_n)$.
Since $f_n$ is a simple extension, Proposition \ref{prop:Trace-simple} implies that, we have $f_{n*} \tau_{n, m} ^{R_n} (\eta_n) = \tau_{n, m} ^{R_{n-1}} (\eta_{n-1})$ for some $\eta_{n-1} \in \mathbb{W}_m \Omega_{R_{n-1}}^{n-1}$. Since $f= f_1 \circ \cdots \circ f_n$ and each $f_i$ is a simple extension, by successive applications of $f_{i*}$'s and Proposition \ref{prop:Trace-simple}, we obtain $Z = f_* (Z_n) = \tau_{n,m} ^R (\eta_0)$ for some $\eta_0 \in \mathbb{W}_m \Omega_R ^{n-1}$. 
\end{proof}

\begin{cor}\label{cor:main local inf perf}
Theorem \ref{thm:Main-1} holds over all infinite perfect base fields.
\end{cor}

\begin{proof}
A combination of \thmref{thm:sfs-TCH} and \propref{prop:sfs-sym} implies that $\tau_{n,m} ^R$ is surjective, while \lemref{lem:Injection} implies that $\tau_{n,m} ^R$ is injective. 
\end{proof}

\subsection{The finite field case}\label{sec:FF}
In \S \ref{sec:FF}, we prove Theorem \ref{thm:Main-1} when the base field is finite. In particular, together with Corollary \ref{cor:main local inf perf}, this shows that Theorem \ref{thm:Main-1} holds for all perfect fields. We will achieve this using a pro-$\ell$ extension argument.

For a moment, we let $k$ be any field with the exponential characteristic $p \geq 1$, and let $R$ be a regular semi-local $k$-algebra essentially of finite type. Let $m,n \ge 1$ be two integers. We shall use the following general result.

\begin{lem}\label{lem:fp_inj0}
Let $k \hookrightarrow k'$ be a finite separable extension of fields with $d:= [k':k]$ such that $(p, d)= 1$. Let $f \colon R \to R_{k'}$ be the base change homomorphism. Then $f^*\colon \TH^n (R, n;m) \to \TH^n (R_{k'}, n;m)$ is injective.
\end{lem}

\begin{proof}
We write $R' = R_{k'}$. Since $f$ is a finite \'etale morphism of degree $d$, we have $f_* f^* = d \cdot ({\rm Id})$ on $\TH^n (R, n;m)$. On the other hand, $\TH^n (R, n;m)$ is an $\mathbb{W}_m (k)$-module (see \cite[Theorem 6.13]{KP3}), while $d \in \mathbb{W}_m (k)^{\times}$. This is clear if $p =1$.
Otherwise, this follows from the fact that $\W_m(k)$ is a
$\W_m(\F_p)$-algebra and the latter is a finite product of rings of the form
${\Z}/{p^n}$. Thus, $\frac{1}{d} f_* \circ f^* = {\rm Id}$ on $\TH^n (R, n;m)$. In particular, $f^*$ is injective.
\end{proof}

\begin{cor}\label{cor:fp_inj0-1}
Assume $k$ is a finite field and let $k'$ be a pro-$\ell$ algebraic extension of $k$ for a prime $\ell \not = p$. Let $f \colon R \to R_{k'}$ be the base change homomorphism. Then $f^*\colon \TH^n (R, n;m) \to \TH^n (R_{k'}, n;m)$ is injective.
\end{cor}

\begin{proof}
We can express $k'= \varinjlim_{i \in I} k_i$ for fields $k_i$ indexed by a directed set $I$, where each $k_i$ is a finite extension of $k$ such that $[k_i : k] = \ell ^{N_i}$ for some $N_i \in \mathbb{N}$. In particular, $(p, [k_i, k]) = 1$. We then have $R_{k'} = \varinjlim_{i \in I} R_{k_i}$ and $\TH^n (R_{k'}, n;m) = \varinjlim_{i \in I} \TH^n (R_{k_i}, n;m)$ by \lemref{lem:limit-0}. The assertion of the corollary therefore follows from Lemma \ref{lem:fp_inj0}.
\end{proof}

\begin{lem}\label{lem:Trace*}
Assume $k$ is a perfect field and let $k'$ be a finite extension of $k$ of degree $d$ such that $(d,p) = 1$. Let $f \colon R \to R_{k'}$ be the base change homomorphism. Let $\alpha \in \TH^n (R, n;m)$ be an element such that $f^*(\alpha) \in {\rm Im}(\tau^{R_{k'}}_{n,m})$. Then $\alpha \in {\rm Im}(\tau^{R}_{n,m})$.
\end{lem}

\begin{proof}
We write $R' = R_{k'}$. Note that $R \inj R'$ is a finite {\'e}tale extension. In particular, $R'$ is a regular semi-local $k$-algebra essentially of finite type. Recall also that a consequence of Witt-complex structures is that $\W_m\Omega_{R'} ^{n-1}$ and $\TH^n (R', n;m)$ are $\W_m(k)$-modules and $\tau^{R'}_{n,m}$ is $\W_m(k)$-linear. Let $f^*(\alpha) = \tau^{R'}_{n,m}(\omega')$ for some $\omega' \in \mathbb{W}_m \Omega_{R'} ^{n-1}$.  Let $\omega = d^{-1} \omega' \in \W_m\Omega_{R'} ^{n-1}$. This makes sense since
$d \in \W_m(k)^{\times}$.
Then $\tau^{R'}_{n,m}(\omega) = d^{-1}f^*(\alpha)$.

We also have the push-forward map $f_* \colon \TH^n (R', n;m) \to \TH^n (R, n;m)$ such that $f_* \circ f^* = d \cdot {\rm Id}$ on $\TH^n (R, n;m )$ . Using this, we get
\[
\alpha = f_*(d^{-1}f^*(\alpha)) = f_* \circ \tau^{R'}_{n,m}(\omega).
\]
On the other hand, the primitive element theorem implies that $R \inj R' = R_{k'}$ is a simple extension of regular semi-local $k$-algebras essentially of finite type. We can therefore apply \propref{prop:Trace-simple} to conclude that
$\alpha \in {\rm Im}(\tau^{R}_{n,m})$.
This finishes the proof.
\end{proof}

\begin{prop}\label{prop:Main-finite}
Theorem \ref{thm:Main-1} holds over all perfect base fields.
\end{prop}

\begin{proof}
Since the map $\tau^R_{n,m}$ is injective by \lemref{lem:Injection}, we only need to show that $\tau^R_{n,m}$ is 
surjective. Since we already know that Theorem \ref{thm:Main-1} holds for infinite perfect fields by \corref{cor:main local inf perf}, we can now assume that the base field $k$ is finite. 

We let $k'$ be a pro-$\ell$ algebraic extension of $k$ for a prime $\ell \not = p$. Let $f \colon R \to R_{k'}$ be the base change homomorphism. We write $k'= \varinjlim_{i \in I} k_i$ for fields $k_i$ indexed by a directed set $I$ as in the proof of \corref{cor:fp_inj0-1}. Let $R_i :=  R_{k_i}:= R \otimes_k k_i$ and $R' := R_{k'}$.

For every $i \in I$, we have a commutative diagram
\begin{equation}\label{eqn:Main-finite-0}
\xymatrix@C.8pc{
\mathbb{W}_m \Omega_{R} ^{n-1} \ar[r]^-{f_i^*} \ar[d]_-{\tau^R_{n,m}} & \mathbb{W}_m \Omega_{R_i} ^{n-1} \ar[r] \ar[d]_-{\tau^{R_i}_{n,m}} & \varinjlim_{i \in I} \mathbb{W}_m \Omega_{R_i} ^{n-1} \ar[r]^-{\simeq} \ar[d] & \mathbb{W}_m \Omega_{R'} ^{n-1} \ar[d]^-{\tau^{R'}_{n,m}}  \\
\TH^n (R, n;m) \ar[r]^-{f^*_i} & \TH^n (R_i, n;m) \ar[r] & \varinjlim_{i \in I} \TH^n (R_i, n;m) \ar[r]^-{\simeq} &  \TH^n (R', n;m)}
\end{equation} 
by \propref{prop:CW-Pull-back} such that the composite horizontal arrows on the top and the bottom are $f^*$. The horizontal top right arrow is an isomorphism by \cite[Proposition~1.16]{R} and the horizontal bottom right arrow is an isomorphism by \lemref{lem:limit-0}. 

Let $\alpha \in \TH^n (R, n;m)$ be arbitrary. Since $k'$ is infinite and perfect, we can write $f^*(\alpha) = \tau^{R'}_{n,m}(\omega)$ for some $\omega \in \mathbb{W}_m \Omega_{R'} ^{n-1}$. It follows that there exist $i \in I$ and $\omega_i \in \mathbb{W}_m \Omega_{R_i} ^{n-1}$ such that $\lambda^*_i(f^*_i(\alpha) - \tau^{R_i}_{n,m}(\omega_i)) = 0$, where $\lambda_i \colon R_i \inj R'$ is the inclusion map. We conclude from \corref{cor:fp_inj0-1} that $f^*_i(\alpha) = \tau^{R_i}_{n,m}(\omega_i)$. We now apply \lemref{lem:Trace*} to deduce that $\alpha \in {\rm Im} (\tau_{n, m} ^R)$. This finishes the proof.
\end{proof}

\subsection{The general case}\label{sec:General}
We shall now finish the proof of \thmref{thm:Main-1} using a limit argument. Let $k$ be an arbitrary field of exponential characteristic $p > 1$. Let $R$ be a regular semi-local $k$-algebra essentially of finite type. We write $R = \varinjlim_{i \in I} R_i$ as in \lemref{lem:Popescu-eft}, for a directed system of regular semi-local $\F_p$-algebras $R_i$ essentially of finite type, whose transition maps are injective and faithfully flat. Let $m,n \ge 1$ be two integers. This leads to a diagram of restricted Witt-complexes over $R$:
\begin{equation}\label{eqn:Final-local}
\xymatrix@C.8pc
{{\varinjlim_i} \ \W_m\Omega^{n-1}_{R_i} \ar[rrr]^{{\varinjlim_i}  \tau^{R_i}_{n,m} \ \ \ } \ar[d] & & & {\varinjlim_i} \ \TH^n(R_i, n;m) \ar[d] \\
\W_m\Omega^{n-1}_{R} \ar[rrr]^{\tau^{R}_{n,m} \ \ } & & & \TH^n(R, n;m).}
\end{equation}

The left vertical arrow is an isomorphism of restricted Witt-complexes by \cite[Proposition~1.16]{R} and the right vertical arrow is an isomorphism of restricted Witt-complexes by \lemref{lem:limit-0}. It follows from \propref{prop:CW-Pull-back} that ~\eqref{eqn:Final-local} is commutative. 

\begin{prop}\label{prop:Main-gen}
Theorem \ref{thm:Main-1} holds over all base fields.
\end{prop}

\begin{proof}
Since the result holds for all perfect base fields by \propref{prop:Main-finite}, to deal with the remaining imperfect base field case we can assume $p > 1$. In this case, we can use the commutative diagram ~\eqref{eqn:Final-local}. It follows from \propref{prop:Main-finite} that $\tau^{R_i}_{n,m}$ is an isomorphism for each $i \in I$. In particular, the top horizontal arrow in ~\eqref{eqn:Final-local} is an isomorphism. Since the two vertical arrows are isomorphisms, we conclude that the bottom arrow $\tau^R_{n,m}$ is an isomorphism. The naturality of $\tau^R_{n,m}$ follows from \propref{prop:CW-Pull-back}.
\end{proof}

\subsection{$p$-typicalization of additive Chow groups}\label{sec:p-add}
Let $k$ be any field of exponential characteristic $p > 1$ and let $R$ be a regular $k$-algebra essentially of finite type. Let $\sP = \{1, p, p^2, \ldots\}$. Recall from ~\eqref{eqn:p-typical-decom} that there is a direct product decomposition into the p-typical ones for each $n \geq 1$:
\begin{equation}\label{eqn:p-typical-decom1}
\theta_R \colon \W_m \Omega_R ^{n-1} \xrightarrow{\simeq} 
{\underset{n \in I_p}\prod} \W_{\sP \cap ( \un{m}/ n) } \Omega_R ^{n-1},
\end{equation}
where $\un{m}= \{ 1, \cdots,m \}$ and $I_p$ is the set of positive integers prime to $p$. This decomposition is natural in $R$.

\begin{defn}\label{defn:p-typ0}
Suppose $R$ is semi-local in addition. We can use Theorem \ref{thm:Main-1} to define the $p$-typical additive higher Chow groups as

\begin{equation}\label{eqn:p-typical-add-0}
\TH^n(R;p^i) := \tau^{R}_{n, p^{i-1}} \circ \theta^{-1}_R(\W_{\sP \cap \un{p^{i-1}}} \Omega^{n-1}_R),
\end{equation}
where $W_i \Omega_R ^{n-1} = \W_{\sP \cap \un{p^{i-1}}} \Omega^{n-1}_R$ is identified with
$\W_{\sP \cap \un{p^{i-1}}} \Omega^{n-1}_R \times \{0 \} \times \cdots \times \{0\}$ on the right hand side of \eqref{eqn:p-typical-decom1}. 
\end{defn}
We thus have an immediate application of \thmref{thm:Main-1}:

\begin{cor}\label{cor:p-typ*}
Let $k$ be a field of exponential characteristic $p>1$ and let $R$ be a regular semi-local $k$-algebra essentially of finite type. For every $n, i \ge 1$, there is a natural isomorphism of $p$-typical Witt-complexes
\begin{equation}\label{eqn:p-typical-add-1}
\tau^R_{n,p^i} \colon W_i\Omega^{n-1}_R \xrightarrow{\simeq} \TH^n(R;p^i).
\end{equation}
\end{cor}

\begin{remk}
One may hope that these $p$-typical additive higher Chow groups can be defined for a bit more general schemes beyond the semi-local case, e.g., all regular $k$-schemes essentially of finite type. For this purpose, we may not be able to use Theorem \ref{thm:Main-1}. We do not know how to do it in general, but at least when $k$ is perfect, we can use Lemmas~\ref{lem:WC-structure-perf} and ~\ref{lem:CW-Pull-back-RWC} (or Theorem \ref{thm:Perf**}). Here, $\{ \TH^n (R, n; m) \}_{n, m \geq 1 }$ forms a restricted Witt-complex over $R$. When $m= p^{i-1}$, the ring $\mathbb{W}_m (R)$ acts on $\TH^n (R, n; p^{i-1})$. We shall define the
$p$-typical additive higher Chow groups of regular $k$-algebras as follows.
\end{remk}

\begin{defn}\label{defn:p-typ**}
Let $k$ be a perfect field of exponential characteristic $p>1$ and let $R$ be a regular $k$-algebra essentially of finite type. 
For any integers $n, i \ge 1$, we define the $p$-typical additive higher Chow groups of $R$ to be the $\W_{p^{i-1}}(R)$-submodule
\begin{equation}\label{eqn:p-typical-add}
\TH^n(R;p^i) := \W_{\sP \cap \un{p^{i-1}} }(R) \cdot \TH^n(R,n;p^{i-1}),
\end{equation}
where we are regarding the factor $W_{i} (R) = \W_{\sP \cap \un{p^{i-1}} }(R)$ of $\mathbb{W}_{p^{i-1}} (R)$ in the decomposition \eqref{eqn:p-typical-decom1} for $n=1$, as an ideal of $\mathbb{W}_{p^{i-1}} (R)$. 
\end{defn}

The subgroups $\TH^n(R;p^i) \subset \TH^n(R,n;p^{i-1})$ are natural in $R$. Note that when $R$ is semi-local and $k$-perfect, the definitions \eqref{eqn:p-typical-add-0} and \eqref{eqn:p-typical-add} coincide. 

\section{Applications}\label{sec:Appln}
In this section, we discuss several applications of \thmref{thm:Main-1}. Apart from these, we mention that \thmref{thm:Main-1} is also an essential part of the proofs of the main results of \cite{GK-1} and \cite{GK-2}. We fix a field $k$.

\subsection{Crystalline cohomology via algebraic cycles}\label{sec:Crys}
Let $k$ be a perfect field. It was shown in \cite[Theorem~4.5]{KP3} that the additive higher Chow groups form presheaves of abelian groups on the category $\SmAff^\ess_k$ of smooth affine schemes essentially of finite type over $k$. Using the left Kan extension, these groups extend to presheaves on $\Sch^\ess_k$ as follows.

For $X \in \Sch^\ess_k$, let $(X \downarrow \SmAff^\ess_k)$ be the category whose objects are the $k$-morphisms $X \to A$, with $A \in \SmAff^\ess_k$. A morphism from $h_1: X \to A$ to $h_2: X \to B$, with $A, B \in \SmAff^\ess_k$ is given by a $k$-morphism $g: A \to B$ such that $g\circ h_1 = h_2$. The category $(X \downarrow \SmAff^\ess_k)$ is clearly cofiltered (see \cite[\S~4.4]{KP3}). 

For $m \ge 0$, $n, q \ge 1$ and $X \in \Sch^{\ess}_k$, we let
\[
\wt{\TH} ^q (X,n;m):= 
\underset{A \in (X \downarrow \SmAff^\ess_k)^{\op}}{\colim} \TH^q(A, n;m).
\]

By \cite[Proposition 4.8]{KP3}, we know that $\wt{\TH} ^q (-, n;m)$ is a presheaf on $\Sm^\ess_k$ and $\Sch^\ess_k$. There is a natural homomorphism $\alpha_X \colon \wt{\TH} ^q  (X, n;m) \to \TH^q (X, n;m)$. This is an isomorphism for $X \in \SmAff^{\rm ess}_k$. We let $\tch ^q (n;m)$ denote the sheaf on the big Zariski site of $\Sch^\ess_k$ associated to the presheaf $\wt{\TH} ^q (-, n;m)$. For $X \in \Sch^\ess_k$, we denote the restriction of $\tch ^q (n;m)$ to the small Zariski site of $X$ by  $\tch ^q (n;m)_X$. 

\vskip .2cm

Assume now that $p >1$. For $n, i \ge 1$ and $X \in \Sch^\ess_k$, using Definition \ref{defn:p-typ**}, we let
\begin{equation}\label{eqn:presheaf TCH-0}
\wt{\TH}^n (X; p^i) := \underset{A \in (X \downarrow \SmAff^\ess_k)^{\op}}{\colim} \TH^n(A;p^i).
\end{equation}
Since the $p$-typical additive higher Chow groups are presheaves on smooth affine schemes over $k$, it follows from \cite[Proposition 4.8]{KP3} that $\wt{\TH}^n(-; p^i)$ is a presheaf on  $\Sm^\ess_k$ and $\Sch^\ess_k$. We let $\tch ^n (p^i)$ denote the sheaf on the big Zariski site of $\Sch^\ess_k$ associated to the presheaf $\wt{\TH} ^q (-; p^i)$. For $X \in \Sch^\ess_k$, we denote the restriction of $\tch ^n (p^i)$ to the small Zariski site of $X$ by  $\tch ^n (p^i)_X$. The Leibniz rule for the additive Chow higher groups implies that $\tch ^\bullet (p^i) := \bigoplus_n \tch ^n (p^i)$ is a subcomplex of $\tch ^\bullet (\bullet; p^{i-1})$.

Recall that $\W_m\Omega^n_X$ and $W_i\Omega^n_X$ are already sheaves of quasi-coherent $\W_m\sO_X$-modules on the Zariski (in fact {\'e}tale) site of $X$. 
Moreover, it follows from \thmref{thm:Perf**} that there are morphisms between the Zariski sheaves of Witt-complexes $\W_m\Omega^\bullet_X \to \tch ^\bullet(\bullet;m)_X[1]$ and
the $p$-typical Witt-complexes $W_i\Omega^\bullet_X \to \tch ^\bullet (p^i)_X[1]$.

\vskip .2cm

We can now state the following result which describes the crystalline cohomology over $W(k)$ defined by Berthelot-Grothendieck \cite{Berthelot} in terms of algebraic cycles. This result can therefore be viewed as the cycle-theoretic avatar of the theorem of Bloch \cite{Bloch crys} which described the crystalline cohomology in terms of algebraic $K$-theory. Recall that the crystalline cohomology was originally defined by Berthelot-Grothendieck as the cohomology of the structure sheaf $\sO_{X, {\rm crys}}$ on the complicated crystalline site of $X$.

\begin{thm}\label{thm:Cryst-coh}
Let $k$ be a perfect field of exponential characteristic $p > 1$. Let $X$ be a smooth quasi-projective scheme over $k$ and $n \ge 0$ an integer. Then there is a canonical isomorphism
\[
{\rm H}^n_{\rm crys}(X/{W}) \xrightarrow{\simeq} {\underset{i \ge 1}\varprojlim} \
\H^{n+1}_{\zar}(X, \tch ^\bullet (p^i)_X).
\]
\end{thm}
\begin{proof}
By \cite[Th{\'e}or{\`e}me~II.1.4]{Illusie}, there is a canonical isomorphism of cohomology groups ${\rm H}^n_{\rm crys}(X/{W}) \xrightarrow{\simeq} {\underset{i \ge 1}\varprojlim} \ \H^{n}_{\zar}(X, W_i\Omega^\bullet_X)$. On the other hand, we have a morphism of the inverse systems of the complexes of Zariski sheaves $W_i\Omega^\bullet_X \to \tch ^\bullet (p^i)_X[1]$. This is an isomorphism by \corref{cor:p-typ*}. The theorem now follows.
\end{proof}

\subsection{Gersten conjecture for additive higher Chow groups}\label{sec:Gersten}
Let $k$ be an arbitrary field. For a presheaf $F$ of abelian groups defined on $\Sm_k$, the Gersten conjecture for $F$ asks whether the Cousin complex of $F$ is locally exact. The exactness of this complex plays an important role in the study and applications of the presheaf $F$. This conjecture is known for many presheaves. For instance, it is known for higher algebraic $K$-theory by Quillen \cite[Theorem 5.11]{Quillen K}, for Milnor $K$-theory by Kerz \cite{Kerz} and, for the de Rham-Witt complex by Gros \cite{Gros}. We have the following answer for the additive higher Chow groups. For $X \in \Sm_k$, let $X^{(i)}$ denote the set of codimension $i$ points on $X$.

\begin{thm}\label{thm:Gersten-exact}
Let $k$ be an arbitrary field and let $X$ be a smooth scheme of finite type over $k$. Then the Cousin complex of the Zariski sheaves of additive higher Chow groups is exact on $X$. That is, there is an exact sequence of Zariski sheaves
\begin{eqnarray*}
0 & \to& \tch ^n (n; m)_X \to \coprod_{x \in X^{(0)}} (i_x)_* {\rm H}_x ^0  (X,\tch ^{n} (n; m)_X )  \to \\
& &  \coprod_{x \in X^{(1)}} (i_x)_* {\rm H}_x ^1  (X,\tch ^{n} (n; m)_X ) \to  \coprod _{ x \in X^{ (2)}} (i_x)_* {\rm H}_x ^2  (X, \tch ^{n} (n; m)_X ) \to \cdots 
\end{eqnarray*}
for every $m \ge 0$ and  $n \ge 1$.
\end{thm}

\begin{proof}
Since all groups are zero for $m =0$ by \cite[Theorem~1.5]{KP-4}, we can assume $m \ge 1$. Since $\tch ^n (n; m)_X$ is a Zariski sheaf on $X$, all maps in the above sequence are well-defined. To show its exactness, we can assume that $X = \Spec(R)$, where $R$ is a smooth local $k$-algebra essentially of finite type. Using Theorem \ref{thm:Main-1}, it suffices to prove the exactness for the Cousin complex of $\W_m\Omega^n_X$. If $p = 1$, then $\W_m\Omega^n_X$ is a direct sum of the sheaves of absolute K{\"a}hler differentials for which the exactness is classically known. If $p > 1$ and $k$ is perfect, the exactness follows directly from \cite[Proposition 5.1.2]{Gros} and the $p$-typical decomposition of $\W_m\Omega^n_X$.

Suppose now that $k$ is imperfect. Using the $p$-typical decomposition, it suffices again to prove the theorem for the $p$-typical de Rham-Witt-complex. We can now use \lemref{lem:Popescu-eft} and write $R = \varinjlim_i R_i$, where $\{R_i\}_{i \in I}$ is a direct system of local $\F_p$-algebras essentially of finite type, such that the transition maps are faithfully flat and the inclusions $R_i \inj R$ are also faithfully flat. In particular, the maps $X \to \Spec(R_i)$ are surjective. As we saw in the proof of \lemref{lem:Popescu-eft}, any prime ideal $\fp \subset R$ is an extension of a prime ideal $\fp_i \subset R_i$ for some $i \in I$. In this case, it is clear that $(R_i \cap \fp) R = \fp$. In other words, there is $i \in I$ and a prime ideal $\fp_i \subset R_i$ such that $\fp_i = \fp \cap R_i$ and $\fp = \fp_i R$.

On the other hand, if $\fp_i \subset R_i$ is a prime ideal of height $j$ for any $i \in I$ and $j \ge 0$ and $\fp$ is a minimal prime of $\fp_i R$, then we must have $\fp_i = \fp \cap R_i$. In this case, the flatness of the inclusion $R_i \inj R$ and \cite[Corollary~14.95]{Gortz} together imply that $\fp$ is a prime ideal of $R$ of height $j$. It follows that $X^{(j)}$ coincides with the inverse limit of $\Spec(R_i)^{(j)}$ where $i$ runs through $I$. Since the cohomology with support commutes with direct limits, it follows that the Cousin complex of $W_m\Omega^n_{R}$ is a direct limit of the Cousin complexes of $W_m\Omega^n_{R_i}$. But the exactness of these latter complexes follows from the case of perfect base field shown above.
\end{proof}

Using Theorem \ref{thm:Main-1} and \propref{prop:inject RK}, we deduce the following piece of the Gersten conjecture for the additive higher Chow groups of regular (not necessarily smooth) semi-local $k$-algebras.

\begin{cor}\label{cor:Inj-ach}
Let $k$ be an arbitrary field and $R$ be a regular semi-local $k$-algebra essentially of finite type. Let $K$ denote the total ring of quotients of $R$. Then the canonical map
\[
\TH^n(R,n;m) \to \TH^n(K,n;m)
\]
is injective for all $m \ge 0$ and $n \ge 1$.
\end{cor}

\subsection{Applications to algebraic $K$-theory}\label{sec:K-thry}
Let $k$ be an arbitrary field with the exponential characteristic $p \geq 1$ and let $R$ be a regular semi-local $k$-algebra essentially of finite type. We can use \thmref{thm:Main-1} and the results of \cite{HessK} to describe the algebraic $K$-theory of the truncated polynomial algebras over $R$ in terms of algebraic cycles as follows. This shows that the algebraic $K$-groups of these truncated polynomial algebras are motivic. Using \thmref{thm:Main-1}, it is shown in \cite{GK-1} and \cite{GK-2} that the Milnor $K$-groups of these truncated polynomial algebras are also motivic.

For $m \ge 1$, we let $R_m = {R[t]}/{(t^m)}$.
Let $K(R_m, (t))$ denote the relative $K$-theory spectrum
for the augmentation ideal $(t) \subset R_m$.

\begin{thm}\label{thm:HeMa-cycles}
Let $m \ge 2$ and $q \ge 1$ be two integers.
\begin{enumerate}
\item If $p = 1$, there is a natural isomorphism
\[
K_q(R_m, (t)) \xrightarrow{\simeq} {\underset{n \ge 0}\bigoplus} \TH^{q +1 - 2n}(R, q +1 - 2n; m-1).
\]
 
\item If $p > 1$, there is a natural long exact sequence
\[
\cdots \to {\underset{i \ge 0}\bigoplus} \TH^{q-2i}(R, q-2i; m (i+1))
\xrightarrow{V_m} 
{\underset{i \ge 0}\bigoplus} \TH^{q-2i}(R, q-2i;i+1) 
\]
\[
\hspace*{8cm} \xrightarrow{\epsilon} K_q(R_m, (t)) \to \cdots .
\]
\end{enumerate}
\end{thm}

\begin{proof}
Combine Theorem \ref{thm:Main-1} with \cite[Theorem 10.1, p.27]{HessK} for (1), and with \cite[Theorem 12.1, p.32]{HessK} for (2). 
\end{proof}

\subsection{Milnor $K$-theory and de Rham-Witt complex}\label{sec:Milnor}
Let $k$ be an arbitrary field and let $R$ be a regular semi-local $k$-algebra essentially of finite type. If $k$ is infinite, it is easy to see that the map $d\log : (R^{\times})^{\otimes n} \to \Omega^{n}_R$, given by $a_1 \otimes \cdots \otimes a_n \mapsto d\log(a_1) \wedge \cdots \wedge d\log(a_n)$, defines a group homomorphism $K^M_n(R) \to \Omega^n_R$. Even if the existence of this map 
at the level of de Rham-Witt complex was a folklore, a written proof was provided
relatively recently in \cite[Appendix~B]{GH}.
When $R$ is a field, this map was probably already known to
Bloch-Kato-Gabber \cite[Corollary~2.8]{BK} even if they did not write a proof.
As the referee pointed out, the dlog map for general $k$-algebras can be deduced
from the case of fields.

Using \thmref{thm:Main-1}, we obtain an alternate and simple proof
of the existence of dlog map from the Milnor $K$-theory to 
the de Rham-Witt complex of
regular semi-local $k$-algebras using algebraic cycles.
Recall from \S~\ref{subsection:Milnor K} that $K^M_*(R)$ denotes the Milnor $K$-theory in the sense of Gabber and Kerz \cite{Kerz10}, and it may differ from the classical Milnor $K$-theory if $k$ is finite.

\begin{cor}\label{cor:Milnor-dlog}
For every $m, n \ge 1$, there is a natural homomorphism
\[
d\log \colon K^M_n(R) \to \W_m \Omega^n_R;
\]
\[
d\log(\{a_1, \ldots , a_n\}) = d\log([a_1]) \wedge \cdots \wedge d\log([a_n]).
\]
\end{cor}

\begin{proof}
We have the natural maps
\[
\CH^n(R,n) \to \CH^n(R,n) \otimes_{\Z} \TH^1(R,1;m) \to \TH^{n+1}(R,n+1;m),
\]
where the first arrow takes an element $\alpha \in \CH^n(R,n)$ to $\alpha \otimes \Gamma_{(1 -t)}$ (see ~\eqref{eqn:Structure-map}). The second arrow is the cap product map of \corref{cor:cap product*}. We denote the composite map by $d\log'$.

By Totaro \cite{Totaro}, the map $\{a_1, \ldots, a_n\} \mapsto V (t_1 - a_1, \ldots , t_n -a_n)$ defines an isomorphism $K^M_n(F) \xrightarrow{\simeq} \CH^n(F, n)$ for every field $F$. By comparing the Gersten resolutions of Milnor $K$-theory from \cite[Proposition~10]{Kerz10} and higher Chow groups from \cite[Corollary, p.300]{Bl1} and using the Totaro's theorem for fields, it follows that Totaro's map is also defined for $R$ and yields an isomorphism $\nu^R_n \colon K^M_n(R) \xrightarrow{\simeq} \CH^n(R,n)$. We remark here that Bloch proved the Gersten resolution when $R$ is smooth over $k$. However, the argument of \thmref{thm:Gersten-exact} easily extends Bloch's theorem to regular algebras.

We therefore get a diagram
\begin{equation}\label{eqn:Milnor-dlog-0}
\xymatrix@C1pc{
(R^{\times})^{\otimes n} \ar@{->>}[r] \ar[dr]_-{d\log} &  K^M_n(R) \ar[rr]^-{\nu^R_n} \ar@{.>}[d] &&  \CH^n(R,n) \ar[d]^-{d\log'} \\
& \W_m\Omega^n_R \ar[rr]^-{\tau^R_{n+1,m}} & &  \TH^{n+1}(R,n+1;m)}
\end{equation}
in which $\nu^R_n$ is an isomorphism. It is clear from the definitions of various maps that the outer trapezium is commutative (see \lemref{lem:Main-2-symbol}). Note that the classical Milnor $K$-theory of $R$ surjects onto our Milnor $K$-theory $K^M_*(R)$ (see \S~\ref{subsection:Milnor K}). This shows that $(R^{\times})^{\otimes n} \to K^M_n(R)$ is surjective. Since $\tau^R_{n+1,m}$ is also an isomorphism by \thmref{thm:Main-1}, it follows that the $d\log$ map factors through the quotient $d\log \colon K^M_n(R) \to \W_m\Omega^n_R$ such that
\begin{equation}\label{eqn:Milnor-dlog-2}
d\log' \circ \nu^R_n = \tau^R_{n+1,m} \circ d\log.
\end{equation}
This finishes the proof.
\end{proof}

Since $\W_m\Omega^n_R$ is a $\W_m(R)$-module, it follows from \corref{cor:Milnor-dlog} that there is a natural map of $\W_m(R)$-modules:
\begin{equation}\label{eqn:Milnor-dlog-1} 
 K^M_n(R) \otimes_{\Z} \W_m(R) \to \W_m\Omega^n_R;
\end{equation}
\[
\{a_1, \ldots, a_n\} \otimes a \mapsto a d\log([a_1]) \wedge \cdots \wedge d\log([a_n]).
\]
We similarly have maps
\[
K^M_{n-1}(R) \otimes_{\Z} \W_m(R) \xrightarrow{d\log \otimes {\rm Id}} \W_m\Omega^{n-1}_R \otimes_{\Z} \W_m(R) \to \W_m\Omega^n_R,
\]
where the last map takes $\omega \otimes a$ to $d(a) \wedge \omega$. We thus get a map
\begin{equation}\label{eqn:Milnor-dlog-3} 
(K^M_n(R) \oplus K^M_{n-1}(R)) \otimes_{\Z} \W_m(R) \to \W_m\Omega^n_R.
\end{equation}
We denote this map by $\gamma^R_{n,m}$.

The following result provides a simple presentation of the de Rham-Witt complex:

\begin{cor}\label{cor:HK}
Assume that either $k$ is infinite or $R$ is local. Let $m, n \ge 1$ be any two integers. Then the map
\[
\gamma^R_{n,m} \colon (K^M_n(R) \oplus K^M_{n-1}(R)) \otimes_{\Z} \W_m(R) \to \W_m\Omega^n_R
\]
is surjective.
\end{cor}

\begin{proof}
This follows directly from \corref{cor:Milnor-dlog} and \cite[Proposition~4.7]{RS} (if $R$ is local) and \cite[Lemma~4.3]{GK-2} (if $k$ is infinite).
\end{proof}

\subsection{Application to motivic cohomology}\label{sec:MC}
Let $k$ be an arbitrary field of exponential characteristic $p > 1$ and let $R$ be a regular semi-local $k$-algebra essentially of finite type. For $m, i \ge 1$, we have the natural maps
\begin{equation}\label{eqn:MC-0}
\CH^n(R,n) \xrightarrow{d\log'} \TH^{n+1}(R,n+1; p^{i-1}) \surj
\TH^{n+1}(R;p^i),
\end{equation}
where the second arrow is the quotient map given by the $p$-typical decomposition of $\TH^{n+1}(R,n+1; p^{i-1})$ (see ~\eqref{eqn:p-typical-decom} and ~\eqref{eqn:p-typical-add-0}). We let $\TH^{n+1}_{\log}(R; p^i)$ denote the image of the composite map in ~\eqref{eqn:MC-0}. Let ${\rm H}^a_{\mathcal{M}} (R, {\Z}/{p^i}(b))$ denote Voevodsky's mod-$p$ motivic cohomology of $R$. Another consequence of \thmref{thm:Main-1} is the following cycle-theoretic description of this motivic cohomology and Milnor $K$-theory mod-$p$:

\begin{cor}\label{cor:MC-main}
There are natural isomorphisms
\[
{\CH^n(R,n)}/{p^i} \xrightarrow{\simeq} {\rm H}^n_{\mathcal{M}} (R, {\Z}/{p^i}(n)) \xrightarrow{\simeq} {K^M_n(R)}/{p^i}  \xrightarrow{\simeq} K_n(R, {\Z}/{p^i}) \xrightarrow{\simeq} \TH^{n+1}_{\log}(R; p^i).
\]
\end{cor}

\begin{proof}
The field case of the two middle isomorphisms are shown in \cite[Theorems~8.1, 8.3]{GL}. These isomorphisms for $R$ follow from the Gersten resolutions of the underlying groups in \cite{Kerz} and \cite{Kerz10}. It suffices therefore to show that ${K^M_n(R)}/{p^i} \simeq \TH^{n+1}_{\rm log}(R; p^i)$.

We now consider the commutative diagram
 
\begin{equation}\label{eqn:MC-main-0}
\xymatrix@C.8pc{
{K^M_n(R)}/{p^i} \ar[rr]^-{\nu^R_n} \ar[d]_-{d\log} & &  {\CH^n(R,n)}/{p^i} \ar[d]^-{d\log'} \\
W_i\Omega^n_R \ar[rr]^-{\tau^R_{n+1, p^i}} & &  \TH^{n+1}(R;p^i).}
\end{equation}

We have seen previously that $\nu^R_n$ is an isomorphism, while $\tau^R_{n+1, p^i}$ is an isomorphism by \corref{cor:p-typ*}. The map $d\log \colon K^M_n(R) \to W_i\Omega^n_R$ of \corref{cor:Milnor-dlog} factors through ${K^M_n(R)}/{p^i}$ and induces an isomorphism of the latter group onto its image (see \cite[Theorems~1.2, 5.1]{Morrow}). We conclude from the definition of $\TH^{n+1}_{\rm log}(R;p^i)$ that $d\log'$ induces an isomorphism ${\CH^n(R,n)}/{p^i} \xrightarrow{\simeq}\TH^{n+1}_{\log}(R; p^i)$. This finishes the proof. 
\end{proof}

\subsection{Trace maps for de Rham-Witt complex}
\label{sec:Trace*}
As we briefly discussed earlier, there is a theory of trace for the $p$-typical de Rham-Witt forms due to Ekedahl \cite{Ekedahl}. Using the $p$-typical decomposition, this allows one to get trace maps for the big de Rham-Witt forms. However, Ekedahl's trace is obtained using a complicated duality theory for these objects. Consequently, it becomes very hard to work with his trace. Furthermore, it takes a lot of work to check that Ekedahl's trace behaves well with various operators.

Using Theorem \ref{thm:Main-1}, we can give a much simpler construction of the trace map on the big de Rham-Witt forms for all finite extensions of regular semi-local $k$-algebras. Our trace is nothing but the easily and explicitly defined push-forward map between the additive higher Chow groups. Furthermore, using this definition, it becomes simple to check all desired properties of the trace map. For instance, we can verify the compatibility of the trace with $V$ and $F$-operators directly because they are simply the pull-back and push-forward on cycles via the map $\phi_r \colon \A^1_k \to \A^1_k$, given by $t \mapsto t^r$.

\begin{thm}\label{thm:trace}
Let $k$ be an arbitrary field. Let $f \colon R \inj S$ be a finite extension of regular semi-local $k$-algebras essentially of finite type. Then there exists a trace map $\Tr_{S/R} \colon \mathbb{W}_m \Omega_{S} ^n \to \mathbb{W}_m \Omega_R ^n$ for every $n \ge 0$ and $m \ge 1$. It is transitive: if $R \subset S \subset S'$ are finite extensions of regular semi-local $k$-algebras essentially of finite type, then we have $\Tr_{S'/R} = \Tr_{S/R} \circ \Tr_{S'/S}$. Moreover, there is a commutative diagram
\begin{equation}\label{eqn:trace-00}
\xymatrix@C.8pc{
\W_m \Omega^{n-1}_{S}  \ar[rr]^-{\tau^{S}_{n,m}} \ar[d]_-{\Tr_{S/R}} & & \TH^n(S, n;m) \ar[d]^-{f_*} \\
\W_m \Omega^{n-1}_R \ar[rr]^-{\tau^R_{n,m}} & &  \TH^n(R, n;m).}
\end{equation}
\end{thm}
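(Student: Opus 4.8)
The strategy is to build the trace $\Tr_{R'/R}$ by first reducing to the semi-local case and then exploiting the isomorphism $\tau^{(-)}_{n,m}$ of Theorem~\ref{thm:main local} together with the push-forward $f_*$ on additive higher Chow groups. The key observation is that in the semi-local situation, if $R \subset R'$ is finite, then via Theorem~\ref{thm:main local} both $\tau^{R'}_{n,m}$ and $\tau^R_{n,m}$ are isomorphisms, so one can simply \emph{define}
\begin{equation}\label{eqn:trace-defn}
\Tr_{R'/R} := (\tau^R_{n,m})^{-1} \circ f_* \circ \tau^{R'}_{n,m} : \W_m \Omega^{n-1}_{R'} \to \W_m \Omega^{n-1}_R,
\end{equation}
which makes the diagram \eqref{eqn:trace-00} commute by construction. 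One has to check this definition is independent of the choices (it is, once $\tau$ is known to be an isomorphism) and is compatible across different degrees $n$; transitivity then follows from the transitivity of $f_*$ for the composite $\Spec(R'') \to \Spec(R') \to \Spec(R)$ (\cite[Proposition~1.4(1)]{Fulton} at the level of cycles, together with functoriality of push-forward on $\TH$). The reindexing $n \leftrightarrow n-1$ between $\W_m \Omega^n$ in the statement and $\W_m\Omega^{n-1}$ in \eqref{eqn:trace-00} is only cosmetic: one proves the result for all $n \geq 1$ in the form \eqref{eqn:trace-00} and then relabels.

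First I would reduce to the semi-local case. For a general finite extension $R \subset R'$ of regular $k$-algebras essentially of finite type, set $X = \Spec(R)$, $X' = \Spec(R')$, $f : X' \to X$. The big de Rham--Witt complex $\W_m \Omega^\bullet_{(-)}$ is a Zariski sheaf, and by \corref{cor:SAZS} it is isomorphic to $\tch^n(-,n;m)_{\Zar}$ on $\Reg_k$; on a regular scheme both are flasque-resolved by their Cousin complexes (as in the Gersten conjecture section), so a section over $X$ is determined by its stalks, i.e.\ by its images in $\W_m \Omega^{n-1}_{\sO_{X,\fp}}$ as $\fp$ ranges over points of $X$. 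Hence it suffices to construct, for each point $\fp \in X$ with preimages $\fq_1, \dots, \fq_r \in X'$ lying over it, a compatible trace $\W_m\Omega^{n-1}_{\sO_{X',f^{-1}(\fp)}} \to \W_m\Omega^{n-1}_{\sO_{X,\fp}}$, where $\sO_{X',f^{-1}(\fp)}$ is the semi-localization at the (finitely many) primes above $\fp$; this is exactly a finite extension of regular semi-local $k$-algebras essentially of finite type, so \eqref{eqn:trace-defn} applies. The only thing to verify is that these stalkwise traces glue, which follows because $f_*$ on additive higher Chow cycles is defined at the scheme level (\cite[Theorem~3.1]{KP}) and commutes with the restriction maps $\TH^n(U',n;m) \to \TH^n((U')_\fq, n;m)$, so the stalkwise maps are restrictions of a single presheaf map $f_* : \tch^n(X',n;m) \to \tch^n(X,n;m)$; composing with the sheaf isomorphisms of \corref{cor:SAZS} gives $\Tr_{R'/R}$ as a map of Zariski sheaves, hence on global sections.

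The main obstacle is \emph{well-definedness at the semi-local level}, i.e.\ making sense of \eqref{eqn:trace-defn}: one must know that $f_* \circ \tau^{R'}_{n,m}$ actually lands in the image of $\tau^R_{n,m}$. For regular semi-local $k$-algebras essentially of finite type this is automatic since $\tau^R_{n,m}$ is an \emph{isomorphism} by Theorem~\ref{thm:main local} --- so there is no real content here once that theorem is in hand --- but to get the \emph{global} statement one cannot directly invoke surjectivity of $\tau^R_{n,m}$ for a non-semi-local $R$, which is why the reduction to stalks above is needed. A secondary point to handle carefully is compatibility of $\Tr_{R'/R}$ with the differential and with the DGA structure: since each of $\tau^R_{n,m}$, $\tau^{R'}_{n,m}$ is a morphism of restricted Witt-complexes (see \S\ref{sec:DRWC}) and $f_*$ commutes with $\delta$, $F_r$, $V_r$, $\fR$ by Proposition~\ref{prop:Trace-PF}, the trace defined by \eqref{eqn:trace-defn} automatically commutes with these operators; I would record this as part of the statement if needed. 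Transitivity reduces, after applying the (injective) maps $\tau^R_{\bullet,m}$, to the identity $f_* \circ g_* = (f \circ g)_*$ for $\Spec(R'') \xrightarrow{g} \Spec(R') \xrightarrow{f} \Spec(R)$, which holds for proper push-forward of additive higher Chow cycles; this is the easy part. Thus the proof is essentially a formal consequence of Theorem~\ref{thm:main local}, Proposition~\ref{prop:Trace-PF}, \corref{cor:SAZS}, and functoriality of $f_*$, with the only genuine work being the sheaf-theoretic gluing argument in the second paragraph.
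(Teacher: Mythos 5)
Your approach is essentially the same as the paper's: both constructions proceed by viewing $f_*$ on additive higher Chow groups as a morphism of Zariski (pre)sheaves, composing with the sheaf isomorphism $\W_m\Omega^{n-1}_{(-)_{\Zar}} \simeq \tch^n(-,n;m)_{\Zar}$ of Corollary~\ref{cor:SAZS}, and reading off the trace on global sections. The opening ``semi-local formula'' $(\tau^R_{n,m})^{-1}\circ f_*\circ\tau^{R'}_{n,m}$ is exactly what the sheaf-level definition specializes to stalkwise, so the two arguments coincide in substance; the detour through the Cousin/Gersten resolution in your second paragraph is unnecessary scaffolding but does no harm.

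There is, however, one genuine gap: after obtaining a map of Zariski sheaves $f_*\W_m\Omega^{n-1}_Y \to \W_m\Omega^{n-1}_X$ on $X=\Spec(R)$, you write ``gives $\Tr_{R'/R}$ as a map of Zariski sheaves, hence on global sections.'' This silently identifies $\W_m\Omega^{n-1}_R$ with ${\rm H}^0_{\Zar}(X,\W_m\Omega^{n-1}_X)$, where the sheaf is the Zariski \emph{sheafification} of a presheaf (see \S\ref{sec:tch}); a priori, sheafification can change global sections even on an affine scheme. This is precisely the point the paper's proof addresses by invoking the quasi-coherence of $\W_m\Omega^{n-1}_X$ as a sheaf of $\W_m\sO_X$-modules (cited to Hesselholt), which guarantees that the natural map $\W_m\Omega^{n-1}_R \to {\rm H}^0_{\Zar}(X,\W_m\Omega^{n-1}_X)$ is an isomorphism. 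Without that input your construction produces a trace between global sections of sheaves, not between the modules $\W_m\Omega^{n-1}_{R'}$ and $\W_m\Omega^{n-1}_R$ themselves. A secondary, smaller imprecision: you invoke ``the (injective) maps $\tau^R_{\bullet,m}$'' for the transitivity argument, but injectivity of $\tau^R_{n,m}$ is proved only for regular semi-local $R$ (Corollary~\ref{cor:main inj}); for the global statement one should instead check transitivity at the sheaf level (or on stalks), exactly as the paper does via transitivity of proper push-forward.
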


\begin{proof}
We consider the diagram
\begin{equation}\label{eqn:trace-01}
\xymatrix@C.8pc{
\W_m \Omega^{n-1}_{S}  \ar[rr]^-{\tau^{S}_{n,m}} \ar@{.>}[d] & & \TH^n(S, n;m) \ar[d]^-{f_*} \\
\W_m \Omega^{n-1}_R \ar[rr]^-{\tau^R_{n,m}} & &  \TH^n(R, n;m).}
\end{equation}

The push-forward map $f_*$ on the level of additive higher Chow groups exists (e.g., see \cite{KL} or \cite{KPv}). The two horizontal arrows in ~\eqref{eqn:trace-01} are isomorphisms by \thmref{thm:Main-1}. It follows that there is a unique map ${\Tr_{S/R}} \colon \mathbb{W}_m \Omega_{S} ^{n-1} \to \mathbb{W}_m \Omega_R ^{n-1}$ such that \eqref{eqn:trace-01} is completed to be a commutative diagram. The transitivity of ${\Tr_{S/R}}$ is clear from the corresponding property of the push-forward map on additive higher Chow groups.
\end{proof}

Over perfect base fields, we can prove the existence of the trace map without assuming that the underlying rings are semi-local.

\begin{thm}\label{thm:trace-gen}
Let $k$ be a perfect field. Let $f \colon R \inj S$ be a finite extension of regular $k$-algebras essentially of finite type. Then there exists a trace map $\Tr_{S/R} \colon \mathbb{W}_m \Omega_{S} ^n \to \mathbb{W}_m \Omega_R ^n$ for every $n \ge 0$ and $m \ge 1$ satisfying the properties described in \thmref{thm:trace}.
\end{thm}

\begin{proof}
Let $X = \Spec(R)$, $Y = \Spec(S)$ and $f \colon Y \to X$ is the corresponding finite map. Recall from \S~\ref{sec:Crys} that $\tch ^n (n; m)_X$ is the Zariski sheaf on $X$ associated to the presheaf $U \mapsto \TH^n(U, n;m)$ for $n, m \ge 1$. We let $\wh{\tch} ^n (n; m)_X$ denote the Zariski sheaf on $X$ associated to the presheaf $U \mapsto  \TH^n(f^{-1}(U), n;m)$.

Since the push-forward map $f_*$ on the additive higher Chow groups commutes with flat pull-back, it induces a natural map $f_* \colon \wh{\tch} ^n (n; m)_X \to \tch ^n (n; m)_X$ of Zariski sheaves. It follows from \thmref{thm:Perf**} that for $n \ge 1$, there is a diagram of sheaves on $X_{\rm Zar}$:
\begin{equation}\label{eqn:trace-gen-0}
\xymatrix@C.8pc{
f_*(\W_m \Omega^{n-1}_{Y})  \ar[rr]^-{\tau^{Y}_{n,m}} \ar@{.>}[d] & & \wh{\tch} ^n (n; m)_X \ar[d]^-{f_*} \\
\W_m \Omega^{n-1}_X \ar[rr]^-{\tau^X_{n,m}} & &  \tch ^n (n; m)_X.}
\end{equation}

The two horizontal arrows in ~\eqref{eqn:trace-gen-0} are isomorphisms of Zariski sheaves on $X$ by \thmref{thm:Main-1}. It follows that there is a unique map of Zariski sheaves $\Tr_{Y/X} \colon f_*(\W_m \Omega^{n-1}_{Y}) \to \W_m \Omega^{n-1}_X$ on $X$ satisfying the properties described in \thmref{thm:trace}. Since the de Rham-Witt forms are quasi-coherent Zariski sheaves of $\W_m\sO_X$-modules, the map $\Tr_{Y/X}$ induces the desired trace map
\[
\Tr_{S/R} \colon \mathbb{W}_m \Omega_{S} ^n \xrightarrow{\simeq}  {\rm H}^0(X, f_*(\W_m \Omega^{n-1}_{Y})) \to {\rm H}^0(X, \W_m \Omega^{n-1}_{X}) \xleftarrow{\simeq} \mathbb{W}_m \Omega_R ^n
\]
for all $m,n \ge 1$.
\end{proof}

\begin{remk}\label{remk:Witt-vector-trace}
  From the construction of ${\Tr_{S/R}}$ in Theorems~\ref{thm:trace} and ~\ref{thm:trace-gen} and from ~\eqref{eqn:Structure-map}, one checks that $\Tr_{S/R}$ coincides with the trace map for the ring of Witt-vectors in \cite[Appendix]{R} when we take $n = 0$. It also follows from \cite[Lemma~3.22]{R} that ${\Tr_{S/R}}$ of Theorem~\ref{thm:trace} coincides with the one in  \thmref{thm:trace**} in \S \ref{sec:Appendix} Appendix, when $R$ and $S$ are fields. Using the compatibility of Ekedahl's trace with various operators, one knows that his trace coincides with that of \thmref{thm:trace**} for fields. Using \propref{prop:inject RK} and \corref{cor:Inj-ach}, one can then check that trace maps of Theorems~\ref{thm:trace} and ~\ref{thm:trace-gen} coincide with that of Ekedahl for regular semi-local rings.
\end{remk}

\subsection{Galois descent for additive higher Chow groups} \label{sec:Galois}
Let $k$ be an arbitrary field and let $R$ be a regular semi-local $k$-algebra essentially of finite type. Let $G$ be any finite group. Assume that $G$ acts freely on $R$ as $k$-algebra automorphisms so that the inclusion $R^G \inj R$ is finite and {\'e}tale, where $R^G$ denotes the ring of invariants. In particular, $R^G$ is a regular semi-local $k$-algebra essentially of finite type. Using \thmref{thm:Main-1}, we can prove the following Galois descent for the Milnor range additive higher Chow groups.

\begin{thm}\label{thm:G-descent}
The inclusion $R^G \inj R$ induces an isomorphism
\[
  \TH^n(R^G,n;m) \xrightarrow{\simeq} (\TH^n(R,n;m))^G
\]
for all $m \ge 1$ and $n \ge 0$.
\end{thm}

\begin{proof}
The group $G$ acts on $\TH^n(R,n;m)$ and $\W_m\Omega^n_R$. 
Since $G$  is finite, it immediately 
follows from the functoriality of $\tau^R_{n,m}$ 
(see \propref{prop:CW-Pull-back}) that it is $G$-equivariant.

We now let $X = \Spec(R^G)$ and $X' = \Spec(R)$. We let $f \colon X' \to X$ denote the quotient map.
We then have a Cartesian square
\begin{equation}\label{eqn:desc-0}
    \xymatrix@C.8pc{
      G \times X' \ar[r]^-{\mu} \ar[d]_-{p_{X'}} & X' \ar[d]^-{f} \\
      X' \ar[r]^-{f} & X,}
\end{equation}
where $p_{X'}$ is the projection and $\mu$ is the $G$-action map. Since $f$ is {\'e}tale and $\W_m\Omega^{n-1}_X$ is an {\'e}tale sheaf, it follows using the equalizer definition of a sheaf that the canonical map $f^* \colon \W_m\Omega^{n-1}_{R^G} \to (\W_m\Omega^{n-1}_R)^G$ is an isomorphism (between $\W_m(R^G)$-modules).

We now prove the theorem. We let $n \ge 1$ and consider the diagram 
\begin{equation}\label{eqn:G-descent-0}
\xymatrix@C.8pc{
\W_m \Omega^{n-1}_{R^G}  \ar[rr]^-{\tau^{R^G}_{n,m}} \ar[d]_-{f^*} & &  \TH^n(R^G, n;m) \ar[d]^-{f^*} \\
(\W_m \Omega^{n-1}_R)^G \ar[rr]^-{\tau^R_{n,m}} & &  \TH^n(R, n;m)^G.}
\end{equation}

This diagram is commutative by \thmref{thm:Main-1}. The lower horizontal arrow is induced by the $G$-equivariance of ${\tau^R_{n,m}}$. The two horizontal arrows are isomorphisms by \thmref{thm:Main-1}.
The left vertical arrow is an isomorphism as shown above. It follows that the right vertical arrow is also an isomorphism.
\end{proof}

\bigskip

\noindent\emph{Acknowledgments.}  We thank Spencer Bloch for his lessons that guided us during the last decade on this subject, and for his patience and encouragement.
We thank James Borger, H\'el\`ene Esnault and Kay R\"ulling for helpful conversations during this work. We thank Kay R{\"u}lling especially for contributing the Appendix and for bringing Ekedahl's work to our attention. We thank the editor and the referees for their comments that helped in improving the article.

We acknowledge that part of this work was done during JP's visits to TIFR and AK's visits to KAIST. We thank both the institutions. JP would like to thank Damy for his peace of mind at home throughout the hard time this paper was written. JP was partially supported by the National Research Foundation of Korea (NRF) grant (2015R1A2A2A01004120 and 2018R1A2B6002287) funded by the Korean government (MSIP), and TJ Park Junior Faculty Fellowship funded by POSCO TJ Park Foundation.

\vskip .4cm

\setcounter{tocdepth}{1}
\section{Appendix}\label{sec:Appendix}
\begin{center}
{\bf {THE DE RHAM-WITT COMPLEX AND ADDITIVE CHOW}}
\end{center}
\begin{center}
{\bf{GROUPS OVER A FIELD: THE CHARACTERISTIC $2$ CASE}} 
\end{center}
\begin{center}
{Kay R\"ulling} \footnote{The author is supported by the DFG Heisenberg Grant 
RU 1412/2-2.}
\end{center}

The main theorems of \cite{R} (see also \cite{R-1})  were stated only for fields of characteristic $\neq 2$.
This originates in the use of \cite[Theorem~4.2.8]{HeMa-2} which was only for odd primes.
In this appendix, it is explained that thanks to \cite{Costeanu}, the results of \cite{R} extend directly to 
the characteristic $2$ case.
I thank  Amalendu Krishna and Jinhyun Park for the opportunity to detail this extension here.


\subsection{}
Let  $(\W_S\Omega^\bullet_A)_S$ denote the big de Rham Witt complex from 
\cite{Hesselholt2},
where $A$ is a ring and $S$ is running through all truncation sets.
It comes with the maps $d$, $F_n$, $V_n$, restriction, and multiplication.
For $m\ge 1$ we set 
$\W_m\Omega^\bullet_A:= \W_{\{1,\ldots, m\}}\Omega^\bullet_A$ and for a fixed prime $p$
we denote the $p$-typical de Rham-Witt complex by
$W_n\Omega^\bullet_A:=\W_{\{1, p, \ldots, p^{n-1}\}}\Omega^\bullet_A$.
Note that in general $d\circ d$ is not zero in $\W_S\Omega^\bullet_A$.
Denote by $\W_S\Omega^\bullet_{A/\Z}$ the Witt complex
from \cite[Remark~4.8]{Hesselholt2} (with  $k=\W(\Z)$); it is the initial object in the category of Witt
complexes with $\W(\Z)$-linear differential.
Note that $\W_S\Omega^\bullet_{A/\Z}$ is always a DGA, in particular we have $d\circ d=0$.
Furthermore,  if $A$ contains a field, then $\W_S\Omega^\bullet_{A/\Z}= \W_S\Omega^\bullet_A$,
see \cite[Remark 4.2, c)]{Hesselholt2}.
If $A$ is an $\F_p$-algebra, then the $p$-typical de Rham-Witt complex is the one from Bloch-Deligne-Illusie.
Also in case $A$ is an $\F_p$ (or a $\Q$)-algebra, the decomposition
\begin{equation}\label{eqn:para:DRW1}
{\W_S\Omega^\bullet_A= \prod_{(j,p)=1} \W_{\sP\cap S/j} \Omega^\bullet_A,}
\end{equation}
from \cite[Theorem~1.11]{R} is valid in the case $p=2$.
 (Indeed, in this case the construction of the  $V$-complex in 
\cite[Proposition 1.2]{R}
goes through and the same proof as in \cite[Theorem 1.11]{R}  shows that it is the initial object in the category of Witt
complexes as in \cite{Hesselholt2} and that it decomposes as in 
\eqref{eqn:para:DRW1}.)

\begin{thm}[{\cite[Theorem 3.20]{R} for char$(k)\neq 2$}]\label{thm:WC**}
Let $k$ be a field. There is a canonical isomorphism 
\[\W_m\Omega^{n-1}_k \xrightarrow{\simeq} \TH^{n}(k,n;m), \quad m, n\ge 1,\]
where the right hand side is the additive Chow groups of Bloch-Esnault.
Furthermore, via this isomorphism, the maps $d$, $F_n$, $V_n$, restriction and multiplication on the de Rham-Witt side
correspond to $\sD$, $\sF_n$, $\sV_n$, restriction, and $*$, on the Chow side, as defined in \cite[Definition-Proposition 3.9]{R}.
\end{thm}

Thanks to \cite{Costeanu}, which was not at disposal 
when \cite{R} was written, the proof of {\em loc. cit.} goes through, also for $p=2$.
We will explain this in more detail in the following.
Note that $\TH^{n}(k,n;m)$ is written as $\CH^{n+1}(\A^1_k|(m+1) \cdot \{0\},n)$
in \cite{R}.

\subsection{}\label{sec:N-2}
We fix a prime $p$. Let $A$ be a $\Z_{(p)}$-algebra and denote by 
$A[x]$ the  polynomial ring in the variable $x$. 
Then  the group $W_n\Omega^q_{A[x]/\Z}$ (resp. $W_n\Omega^q_{A[x, 1/x]/\Z}$) is freely generated by elements
of the following type:
\begin{align*}
a[x]^j, \quad  &  a\in W_n\Omega^q_{A/\Z}, \, j\ge 0 \,(\text{resp. } j\in \Z),\\
b[x]^{j-1}d[x],\quad &  b\in W_n\Omega^{q-1}_{A/\Z}, \, j \ge 1 \, (\text{resp. }j\in \Z),\\
V^s(a[x]^j),\quad & a\in W_{n-s}\Omega^q_{A/\Z}, \,s\in \{1,\ldots, n-1\},\,  j\ge 1  \text{ with } (j,p)=1\\
                           &                                                    (\text{resp. } j\in \Z\setminus p\Z),\\
dV^s(b[x^j]), \quad & b\in W_{n-s}\Omega^{q-1}_{A/\Z},\, s\in \{1,\ldots, n-1\},\, j\ge 1 \text{ with } (j,p)=1\\
                     &                                                                           (\text{resp. } j\in \Z\setminus p\Z).
\end{align*}

For $A[x]$ and $p$ odd, this is \cite[Theorem 4.2.8]{HeMa-2} and for $p=2$, this follows from
\cite[Theorem 4.3]{Costeanu} (one has to observe that the functor $P$ constructed in these references
sends a $\W(\Z)$-linear $p$-typical Witt complex over $A$ to a $\W(\Z)$-linear $p$-typical Witt complex over $A[x]$ and 
that it preserves surjections). The result for $A[x, 1/x]$ is deduced from this as in \cite[Theorem 2.1]{R},
where (at least in the case $p=2$) 
the reference to \cite[Proposition 1.18]{R} should be replaced by 
\cite[Theorem C]{Hesselholt2}.

\begin{thm}[{\cite[Theorem 2.6]{R} for char$(k)\neq 2$}]\label{thm:trace**}
Let $L/k$ be a finite field extension. Then there exists a trace map
\[\Tr_{L/k} : \W_S\Omega^\bullet_L\to \W_S\Omega^\bullet_k,\]
which satisfies  the properties (i) - (v) from \cite[Theorem 2.6]{R}.
Furthermore \cite[Proposition 2.7]{R} holds.
\end{thm}
\begin{proof}
The proof for $p=2$ is the same as in {\em loc. cit.}, once we made the following remarks:
\cite[Lemma 1.20]{R} also holds for any $\F_2$-algebra $A$ with the same proof;
\cite[Lemma 2.3]{R} also holds for $p=2$, this follows from 
\cite[I, Proposition 3.2, 3.4]{Illusie} and a limit argument;
\cite[Proposition 2.4]{R} holds with the same proof also for $p=2$ once the reference to \cite[Theorem 2.1]{R}
is replaced by the first paragraph of \S~\ref{sec:N-2} above. 
\end{proof}

\subsection{}\label{sec:N-3}
Let $p$ be a prime and $A$ a $\Z_{(p)}$-algebra.
For a finite truncation set $S$, we define
\[{\rm Fil}_{S,j}:= {\rm Ker} (\W_S\Omega^\bullet_{A[[t]]/\Z}\to \W_S\Omega^\bullet_{A[[t]]/(t^j)/\Z}), \quad j\ge 1,\]
and 
\[\W_S\hat{\Omega}^\bullet_{A((t))/\Z}=\varprojlim_{j} \W_S\hat{\Omega}^\bullet_{A((t))/\Z}/ {\rm Fil}_{S,j}.\]
Then any element in $\W_S\hat{\Omega}^\bullet_{A((t))/\Z}$ can be uniquely written as in 
\cite[(2.9.1)]{R}, and we can define
\[\wh{\rm Res}^q_{t,n}: W_n\wh{\Omega}^q_{A((t))/\Z}\to W_n\Omega^{q-1}_{A/\Z}\]
as in \cite[(2.9.2)]{R}. (Using \S~\ref{sec:N-2}), 
the proof is similar as in \cite[Lemma 2.9]{R}.)

We define ${\rm Res}^q_{t,n}$ as the composition
\[W_n\Omega^q_{A((t))/\Z}\xrightarrow{\rm can.} 
W_n\wh{\Omega}^q_{A((t))/\Z}\xrightarrow{\wh{\rm Res}^q_{t,n}} 
W_n\Omega^{q-1}_{A/\Z}.\]
If $A$ contains a field
and $S$ is a finite truncation set, 
then we define 
\begin{equation}\label{eqn:N-4}
{\rm Res}^q_{t,S}: \W_S\Omega^q_{A((t))}\to \W_S\Omega^{q-1}_A
\end{equation}
as in \cite[Definition 2.11]{R}, using that in this case 
we have $\W_S\Omega^\bullet_{A}=\W_S\Omega^\bullet_{A/\Z}$ and that 
the decomposition \eqref{eqn:para:DRW1} also extends to $\W_S\hat{\Omega}$.

For any $\Z_{(p)}$-algebra $A$, the map ${\rm Res}_{t,n}^q$ satisfies the properties (i) - (viii) of \cite[Proposition 2.12]{R}
and \cite[Lemma 2.14]{R} holds.
If $A$ contains a field, the same holds for ${\rm Res}^q_{t, S}$, $S$ any finite truncation set.
(The case ${\rm Res}_{t,n}^q$ for $p=2$ is proven as in {\em loc. cit.}, the case ${\rm Res}_{t,S}^q$ follows from this.
Note however, that even in the case $A$ contains a field, the proofs use reduction to torsion-free case.
Since  the absolute de Rham-Witt complex of a torsion-free $\Z_{(2)}$-algebra is not a DGA,
we prefer to work with the $\W(\Z)$-linear complex.)

\begin{remk}
For an $\F_p$-algebra $A$, the residue ${\rm Res}_t: W_n\Omega^q_{A((t))}\to W_n\Omega^{q-1}_A$ was also constructed in \cite[\S 2, Proposition 3]{Kato} using algebraic $K$-theory and Bloch's approach to the de Rham-Witt complex.
\end{remk}

\begin{thm}[{\cite[Theorem 2]{R-1} for char$(k)\neq 2$}]\label{thm:res}
Let $C$ be a connected regular projective curve over a field $k$ with function field $K=k(C)$.
Let $S$ be a finite truncation set. Then 
\[\sum_{P\in C} {\rm Res}_P(\omega)=0, \quad \text{for all }\omega\in \W_S\Omega^q_K, \, q\ge 1,\]
where the sum is over all closed points in $C$ and ${\rm Res}_P: \W_S\Omega^q_K\to \W_S\Omega^{q-1}_k$
is defined as in \cite[Definition-Proposition 1]{R-1} (using ${\rm Res}^q_{t,S}$ from ~\eqref{eqn:N-4} and $\Tr$ from \thmref{thm:trace**}.)
\end{thm}
\begin{proof}
The  proofs of \cite[Theorem 2]{R-1}, \cite[Theorem 2.19]{R} work also for $p=2$ once we made the following 
remarks: the proof of the well-definedness of ${\rm Res}_P$ is the same as in 
\cite[Definition-Proposition 2.15]{R}
since \cite[Lemma 1.16]{R} holds in general.
\cite[Proposition 2.18]{R} 
holds with the trace from Theorem~\ref{thm:trace**}.
At the end of the proof of \cite[Theorem 2.19]{R} (on page 140/141), an 
element is lifted to
$\W_S\Omega^1_{A((t))}$, with $A=\Z_{(p)}[z_a, z_b,z_c]$.
Replace this by the following argument (at least if $p=2$): first observe that 
the desired vanishing can be reduced
to the $p$-typical case by the definition of ${\rm Res}_P$. Then lift the element $\omega_{2,P}$
to the $W(\Z)$-linear complex $W_n\Omega^1_{A((t))/\Z}$ and proceed as in the proof using the 
${\rm Res}_t$ from ~\eqref{eqn:N-4}.
\end{proof}

\begin{proof}[{Proof of Theorem \ref{thm:WC**}}.]
The proof of \cite[Theorem 3.20]{R} (see also \cite{R-1}) works also for $p=2$ once we made the following remarks:
in \cite[Lemma 3.5]{R}, replace $\W_m\Omega^r_{A}$ by $\W_m\Omega^r_{A/\Z}$ (at least if $p=2$);
at the end of the proof of  \cite[Theorem 3.6]{R} (on page 148), refer to Theorem \ref{thm:res} instead of 
\cite[Theorem 2.19]{R}. In \cite[Lemma 3.15]{R}, observe that if char$(k)=2$,
then $\sD\sD(\alpha)=0$, since in $K^M_2(k)$, we have $\{a,a\}=\{a,-1\}=0$, and similarly
also $\sF_r\sD\sV_r=\sD$; this implies that \cite[Proposition 3.17]{R} also holds if char$(k)=2$.
For the rest of the proof of \cite[Theorem 3.20]{R}, use \thmref{thm:trace**} 
for properties of the trace and \S~\ref{sec:N-2}
instead of \cite[Theorem 2.1]{R}.
\end{proof}

\end{document}